\newcommand{\CC}{{\mathbb C}}
\newcommand{\aff}{{\mathbb A}}
\newcommand{\cF}{{\mathscr F}}
\newcommand{\cI}{{\mathscr I}}
\newcommand{\cN}{{\mathscr N}}
\newcommand{\cO}{{\mathscr O}}
\newcommand{\cR}{{\mathscr R}}
\newcommand{\cS}{{\mathscr S}}
\newcommand{\cT}{{\mathscr T}}
\newcommand{\cU}{{\mathscr U}}
\newcommand{\cV}{{\mathscr V}}
\newcommand{\cZ}{{\mathscr Z}}
\newcommand{\gm}{\mathfrak{m}}
\newcommand{\gS}{\mathfrak{S}}
\newcommand{\hra}{\hookrightarrow}
\newcommand{\la}{\langle}
\newcommand{\lra}{\longrightarrow}
\newcommand{\n}{\noindent}
\newcommand{\ov}{\overline}
\newcommand{\PP}{{\mathbb P}}
\newcommand{\QQ}{{\mathbb Q}}
\newcommand{\ra}{\rangle}
\newcommand{\wh}{\widehat}
\newcommand{\wt}{\widetilde}
\newcommand{\ZZ}{{\mathbb Z}}
\theoremstyle{plain}
\newtheorem{thm}{Theorem}[section]
\newtheorem{clm}[thm]{Claim}
\newtheorem{crl}[thm]{Corollary}
\newtheorem{hyp}[thm]{Hypothesis}
\newtheorem{lmm}[thm]{Lemma}
\newtheorem{prp}[thm]{Proposition}
\newtheorem{prp-dfn}[thm]{Proposition-Definition}
\theoremstyle{definition}
\newtheorem{dfn}[thm]{Definition}
\theoremstyle{remark}
\newtheorem{expl}[thm]{Example}
\newtheorem{rmk}[thm]{Remark}
\DeclareMathOperator{\CH}{CH}
\DeclareMathOperator{\cod}{cod}
\DeclareMathOperator{\DCH}{DCH}
\DeclareMathOperator{\Dec}{Dec}
\DeclareMathOperator{\divisore}{div}
\DeclareMathOperator{\mult}{mult}
\DeclareMathOperator{\NL}{NL}
\DeclareMathOperator{\Pic}{Pic}
\DeclareMathOperator{\sing}{sing}
\DeclareMathOperator{\Spec}{Spec}
\DeclareMathOperator{\supp}{supp}
\DeclareMathOperator{\Sym}{S}
\DeclareMathOperator{\Ver}{Vert}
\newcommand{\cit}[1]{{\rm \textbf{#1}}}
\newcommand{\Ref}[2]{\cit{%
\ifthenelse{\equal{#1}{thm}}{Theorem}{}%
\ifthenelse{\equal{#1}{ass}}{Assumption}{}%
\ifthenelse{\equal{#1}{chp}}{Chapter}{}%
\ifthenelse{\equal{#1}{prp}}{Proposition}{}%
\ifthenelse{\equal{#1}{lmm}}{Lemma}{}%
\ifthenelse{\equal{#1}{cnj}}{Conjecture}{}%
\ifthenelse{\equal{#1}{crl}}{Corollary}{}%
\ifthenelse{\equal{#1}{dfn}}{Definition}{}%
\ifthenelse{\equal{#1}{expl}}{Example}{}%
\ifthenelse{\equal{#1}{hyp}}{Hypothesis}{}%
\ifthenelse{\equal{#1}{rmk}}{Remark}{}%
\ifthenelse{\equal{#1}{clm}}{Claim}{}%
\ifthenelse{\equal{#1}{prb}}{Problem}{}%
\ifthenelse{\equal{#1}{exe}}{Exercise}{}%
\ifthenelse{\equal{#1}{qst}}{Question}{}%
\ifthenelse{\equal{#1}{sec}}{Section}{}%
\ifthenelse{\equal{#1}{subsec}}{Subsection}{}%
\ifthenelse{\equal{#1}{subsubsec}}{Subsubsection}{}%
\ifthenelse{\equal{#1}{univ}}{Universal Property}{}%
\ifthenelse{\equal{#1}{trm}}{Terminology}{}%
\ifthenelse{\equal{#1}{tbl}}{Table}{}%
\  \ref{#1:#2}%
}}
\begin{document}
 \title{Decomposable cycles and Noether-Lefschetz loci}
 \author{Kieran G. O'Grady\\\\
\lq\lq Sapienza\rq\rq Universit\`a di Roma}
\dedicatory{Alla piccola Titti}
\date{June 26  2015}
\thanks{Partially supported by PRIN 2013, the Giorgio and Elena Petronio Fellowship Fund, the Giorgio and Elena Petronio Fellowship Fund II, the Fund for Mathematics of the IAS}
  \maketitle
%
\section{Introduction}
Let $X$ be a smooth complex surface: a rational equivalence class of $0$-cycles on $X$ is \emph{decomposable}  if it is the intersections of two divisor classes. Let $\DCH_0(X)\subset\CH_0(X)$  be 
the subgroup   generated by decomposable $0$-cycles.   Beaville and Voisin~\cite{beauvoisin} proved that if $X$ is a $K3$ surface then $\DCH_0(X)\cong\ZZ$.    What can be said of the group  $\DCH_0(X)$ in general?  An irregular surface $X$ with non-zero map $\bigwedge^2 H^0(\Omega^1_X)\to H^0(\Omega^2_X)$ provides an example with group of decomposable $0$-cycles that is not finitely generated, even after tensorization with $\QQ$. Let us assume that $X$ is a  regular surface: then  $\DCH_0(X)$ is finitely generated because $\CH^1(X)$ is finitely generated, and we may ask for its the rank.  Blowing up  regular surfaces with non-zero geometric genus at  $(r-1)$ generic points, one gets  examples of regular surfaces with  $\DCH_0(X)$ of   rank at least $r$ (see Example 1.3 b) of~\cite{beausplit}). 
What about a less artificial class of surfaces, such as (smooth)  surfaces  in $\PP^3$?  If the rank of $\DCH_0(X)$ is to be larger than $1$ then the rank of $\CH^1(X)$ must be larger than $1$, but the latter condition is not sufficient, for example curves on $X$ whose canonical line-bundle is a (fractional) power of the hyperplane bundle do not   increase the rank of $\DCH_0(X)$, see~\Ref{subsec}{relstan}. The papers~\cite{shioda,boisarti} provide examples of  smooth surfaces in $\PP^3$ with Picard  group of large rank and generated by lines: it follows that the group spanned by decomposable $0$-cycles of such surfaces  has rank $1$. 
 On the other hand Lie Fu proved that  there exist degree-$8$ surfaces  $X\subset\PP^3$ such that  $\DCH_0(X)$ has rank at least $2$, see 
 1.4 of~\cite{fudecomp}.
In the present paper we will prove the result below.
\begin{thm}\label{thm:zorba}
There exist smooth surfaces $X\subset\PP^3$ of degree $d$ such that the rank of $\DCH_0(X)$ is at least $\lfloor \frac{d-1}{3}\rfloor$. 
\end{thm}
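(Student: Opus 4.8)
Since $X\subset\PP^3$ is regular, $\CH^1(X)$ is finitely generated, hence so is $\DCH_0(X)$, and $\DCH_0(X)\otimes\QQ=\QQ[H^2]\oplus(\text{degree-}0\text{ part})$, the two summands being distinguished by degree. To get rank $\ge m:=\lfloor\frac{d-1}{3}\rfloor$ it therefore suffices to exhibit $m-1$ decomposable $0$-cycles of degree $0$ that are $\QQ$-linearly independent in $\CH_0(X)_{\deg 0}$. The crucial difficulty to anticipate is that a homologically trivial $0$-cycle on $X$ has trivial Abel-Jacobi invariant, because $H^3(X,\QQ)=0$; so independence cannot be detected by any intermediate Jacobian and must be accessed through the finer Mumford-style invariants, which are nontrivial only because $p_g(X)=\binom{d-1}{3}>0$.

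\textbf{Step 2 (construction).} For a curve $C\subset X$, adjunction gives $\omega_C=(K_X+C)|_C=\bigl((d-4)H+C\bigr)|_C$, so both $\omega_C$ and $H|_C$ are restrictions to $C$ of classes in $\Pic(X)$. Hence the degree-$0$ class $\delta_C:=(\deg C)\,\omega_C-(2g_C-2)\,H|_C\in\Pic^0(C)$ pushes forward to a \emph{decomposable} degree-$0$ cycle $\zeta_C\in\CH_0(X)_{\deg 0}$. A direct check shows $\delta_C$ is torsion whenever $\omega_C$ is a rational multiple of $H|_C$, so, in line with \Ref{subsec}{relstan}, only curves with $\omega_C\not\propto H|_C$ can contribute; this excludes plane curves and complete intersections. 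I would therefore take $X$ in the component of the Noether-Lefschetz locus carrying $m$ non-complete-intersection curves $C_1,\dots,C_m$ of small degree with independent classes in $\NS(X)$ and with each $\omega_{C_k}\not\propto H|_{C_k}$ — for instance curves of unequal bidegree $(a_k,b_k)$, $a_k\neq b_k$, appearing as components of $X\cap Q_k$ on distinct smooth quadrics $Q_k$, for which $\omega_{C_k}\not\propto H|_{C_k}$ is automatic.

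\textbf{Step 3 (detection).} Spread $X$ in a family $\mathcal X\to S'$ over a smooth locus of that Noether-Lefschetz component, on which all $[C_k]$ stay algebraic. Each $\zeta_{C_k}$ defines a section of the relative $\CH_0$ whose Mumford infinitesimal invariant at $[X]$ lies in the cohomology of a Koszul-type complex built from cup-product with the Kodaira-Spencer class, computable through the Jacobian ring $R=R(F)=\CC[x_0,\dots,x_3]/J_F$, an Artinian Gorenstein ring with socle in degree $4d-8$. Under Griffiths' identifications $H^0(\Omega^2_X)\cong R_{d-4}$ and $H^{1,1}_{\mathrm{prim}}(X)\cong R_{2d-4}$, with first-order deformations moving in the locus governed by $R_d$, the invariant of $\zeta_{C_k}$ is encoded by the socle-valued multiplication $R_{d-4}\otimes R_{2d-4}\otimes R_d\to R_{4d-8}\cong\CC$ with the middle factor specialised to $[C_k]$; the independence of $\zeta_{C_1},\dots,\zeta_{C_m}$ then reduces to the $\QQ$-linear independence of the resulting bilinear forms on $R_{d-4}\otimes R_d$, equivalently to the $[C_k]$ staying independent modulo the annihilator of $R_{d-4}\otimes R_d$.

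\textbf{Step 4 (bound and main obstacle).} The admissible number of such independent classes is controlled by Macaulay's growth estimate for $R$, and the arithmetic of the degrees $d-4$, $2d-4$, $d$ summing to the socle degree is what pins the count at $m=\lfloor\frac{d-1}{3}\rfloor$; this is the numerical origin of the bound. The hard part is concentrated in Steps 3–4. First, one must set up the Mumford invariant correctly for a family of $0$-cycles, where there is no honest normal function to lean on, so the argument has to run through the Bloch-Beilinson graded piece / the Green-Griffiths tangent-space description of $\CH_0$, and one must verify that a nonzero infinitesimal invariant does force $\QQ$-independence on the very general member. Second, one must carry out the explicit commutative-algebra computation in $R(F)$ showing the $m$ classes remain independent — this is where the factor $\tfrac13$ is actually won. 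A subsidiary but necessary point is the nonemptiness and expected codimension of the intersection of the $m$ Noether-Lefschetz conditions, i.e.\ the existence of a single smooth $X$ carrying all $m$ curves with the prescribed invariants.
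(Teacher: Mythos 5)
Your Steps 1--2 are sound and in fact run parallel to the paper's construction: the paper also uses disjoint, non-planar, non-complete-intersection curves lying on quadrics (specifically $(2,3)$-curves), and your cycles $\zeta_{C_k}$ coincide, modulo $H^2$ and the first standard relation $d\,C\cdot H=(\deg C)H^2$, with the classes $C_k^2$ whose independence the paper establishes. The genuine gap is in Steps 3--4, exactly where you yourself concede the difficulty lies: nothing there is proved. You never compute the infinitesimal (Mumford/Green--Griffiths) invariant of $\zeta_{C_k}$, never establish the asserted identification of that invariant with the socle-valued pairing $R_{d-4}\otimes R_{2d-4}\otimes R_d\to R_{4d-8}$ evaluated at $[C_k]$, and never carry out the commutative algebra that would give independence. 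The claim that Macaulay's growth estimate "pins the count" at $\lfloor\frac{d-1}{3}\rfloor$ is numerology with no argument behind it; in the paper the bound has a completely different origin, namely the requirement that $n$ disjoint quintic curves admit the positivity and regularity the Noether--Lefschetz machinery needs ($\pi^{*}\cO_{\PP^3}(d-3)(-E)$ very ample and $\cI_C$ being $(d-2)$-regular), which forces $3n\le d-4$ and yields rank $n+1=\lfloor\frac{d-1}{3}\rfloor$.

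There is also a concrete structural obstacle to your detection scheme that you do not address. Because the classes $[C_k]$ must stay algebraic along your family, $S'$ is forced to lie inside the Noether--Lefschetz locus, so the admissible Kodaira--Spencer directions span only the tangent space to that component --- a proper subspace of $R_d$ --- whereas your pairing argument and any Macaulay/Gorenstein-duality estimate concern multiplication against all of $R_d$; restricted to this subspace the relevant forms could degenerate or vanish, and you offer no mechanism to exclude this. You would further need to prove that a nonzero infinitesimal invariant forces $\QQ$-independence at the very general point, and that a single smooth $X$ of degree $d$ containing all $m$ curves exists; both are flagged but not supplied, and the latter is where much of the paper's actual work (and its numerical bound) resides. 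For contrast, the paper avoids Hodge-theoretic invariants of $X$ entirely: it spreads a putative relation over the universal surface in $|\cI_C(d)|$, shows via a Noether--Lefschetz theorem for codimension-$1$ subfamilies (Joshi's criterion combined with the Griffiths--Harris/Lopez/Brevik--Nollet degeneration) that any vertical class projects into the span of explicit classes on the blow-up $W$ of $\PP^3$ along $C$, and then kills all coefficients using the Abel--Jacobi isomorphism $\CH^2(W)_{\hom}\cong\bigoplus_k\CH_0(C_k)_{\hom}$ together with the hypothesis that no $\omega_{C_k}$ is a rational multiple of $\cO_{C_k}(1)$ --- so independence is ultimately detected by Abel--Jacobi on the blown-up threefold, sidestepping the vanishing of $H^3(X,\QQ)$ that you correctly identify as the central difficulty.
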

In particular the rank of the group of decomposable $0$-cycles of a smooth surface in $\PP^3$ can be arbitrarily large. 

Let us explain the main ideas that go into the proof of~\Ref{thm}{zorba}.   Let $C=C_1\cup\ldots\cup C_n$  be the disjoint union  of  smooth irreducible curves  $C_j\subset \PP^3$. Suppose that 
 if $d\gg 0$, and that the curves $C_j$ are not rationally canonical, i.e.~there exists $e\in\ZZ$ such that $K_{C_j}^{\otimes m}\cong \cO_{C_j}(e)$ only for $m=0$;   we prove that for a very general 
 smooth $X\in|\cI_C(d)|$, the classes  $c_1(\cO_X(1))^2,C_1\cdot C_1,\ldots,C_n\cdot C_n$ in $\CH_0(X)$ are linearly independent. We argue as follows. Assume that they are \emph{not} linearly independent for $X$ very general; then there exists a non-zero $(a,r_1,\ldots,r_n)\in\ZZ^{n+1}$ such that   
\begin{equation}\label{relazione}
a c_1(\cO_X(1))^2+r_1 c_1(\cO_X(C_1))^2+\ldots+ r_n c_1(\cO_X(C_n))^2=0
\end{equation}
 for all  smooth $X\in|\cI_C(d)|$. 
Now let $\pi\colon W\to\PP^3$ be the blow up of $C$,  let $E$ be the exceptional divisor of $\pi$, and $E_j$ be the component of $E$ mapping to $C_j$. Let $\cS\subset W\times\Lambda(d)$ be the universal surface parametrized by $\Lambda(d)$. We let $p_W\colon\cS\to W$ and  $p_{\Lambda(d)}\colon\cS\to \Lambda(d)$ be the projection maps. There is a natural identification $\Lambda(d)=|\cI_C(d)|$, and the generic $S\in \Lambda(d)$  is isomorphic 
to the corresponding  $X\in |\cI_C(d)|$.  Since~\eqref{relazione} holds for all smooth $X$, an application of  the spreading principle shows that the class
\begin{equation}\label{pamphili}
p_W^{*}(a \pi^{*} c_1(\cO_\PP^3(1))^2+r_1 c_1(\cO_W(E_1))^2+\ldots+ r_n c_1(\cO_W(E_n))^2)\in\CH^2(\cS)
\end{equation}
is \emph{vertical}, i.e.~it is represented by a sum of codimension-$2$ subvarieties $\Gamma_i\subset \cS$ such that 
\begin{equation}\label{diminuisce}
\dim p_{\Lambda(d)}(\Gamma_i)<\dim\Gamma_i.
\end{equation}
We prove that if the class in~\eqref{pamphili} is vertical, then $0=a=r_1=\ldots=r_n$. The key result that one needs is a Noether-Lefschetz Theorem for surfaces belonging to an integral  codimension-$1$ closed subset $A\in\Lambda(d)$. More precisely one needs to prove that the following hold:
\begin{enumerate}
\item
 If the generic $S\in A$ is isomorphic to $\pi(S)\subset \PP^3$, i.e.~$S$ contains no fiber of $\pi\colon W\to \PP^3$ over $C$, then $\CH^1(S)$ is generated (over $\QQ$) by $\pi^{*} c_1(\cO_{\PP^3}(1))|_S,c_1(\cO_S(E_1),\ldots,c_1(\cO_S(E_n)$.
\item
If the generic $S\in A$  contains a fiber $R$ of $\pi\colon W\to \PP^3$ over $C$, necessarily unique by genericity of $S$, then  $\CH^1(S)$ is generated (over $\QQ$) by the classes listed in Item~(1), together with $c_1(\cO_S(R))$.
\end{enumerate}
The reason why such a Noether-Lefschetz Theorem  is needed is the following. Let  $\Gamma_i\subset\cS$ be a codimension-$2$ subvariety such that~\eqref{diminuisce} holds, and assume that the generic fiber of $\Gamma_i\to p_{\Lambda(d)}(\Gamma_i)$ has dimension $1$; then $A:=p_{\Lambda(d)}(\Gamma_i)$ is an integral closed codimension-$1$ subset of $\Lambda(d)$, and the restriction of $\Gamma_i$ to the surface $S_t$ parametrized by $t\in A$ is a divisor on $S_t$. Thus we are lead to prove the above Noether-Lefschetz result.  There is a substantial literature on Noether-Lefschetz, but we have not  found a result taylor made for our needs. A criterion of K.~Joshi~\cite{joshi} is very efficient in disposing of \lq\lq most\rq\rq\ choices of 
a codimension-$1$ closed subset $A\in\Lambda(d)$. We deal with the remaining cases by appealing to the Griffiths-Harris approach to Noether-Lefschetz~\cite{grihar}, as further developped by Lopez~\cite{lopez} and Brevik-Nollet~\cite{brenol}.

The paper is organized as follows. In~\Ref{sec}{sezuno} we consider a smooth $3$-fold $V$ with trivial Chow groups,  an ample  divisor  $H$ on $V$ and  surfaces in the linear system  $|\cI_C(H)|$,  where  $C=C_1\cup\ldots\cup C_n$  is the disjoint union  of a fixed collection of smooth irreducible curves  $C_i\subset V$. 
We prove that if the curves $C_i$ are not rationally canonical, and a suitable Noether-Lefschetz Theorem holds, then the classes of $C^2_1,\ldots,C^2_n$ on a very general  $X\in |\cI_C(H)|$ are linearly independent, and they span a subgroup intersecting trivially  the image of $\CH^2(V)\to \CH^2(X)$.  
    In~\Ref{sec}{oltrelopez}   we prove the required Noether-Lefschetz Theorem for $V=\PP^3_{\CC}$.
    In~\Ref{sec}{chiudo}   we prove~\Ref{thm}{zorba} by combining the main results of~\Ref{sec}{sezuno}  and~\Ref{sec}{oltrelopez}.

\bigskip
\n
{\bf Conventions and notation:} We work over $\CC$.  Points are closed points.  

Let $X$ be a variety: \lq\lq If $x$ is  a generic point of $X$, then...\rq\rq\ is shorthand for \lq\lq There exists an open dense $U\subset X$ such that if $x\in U$ then...\rq\rq. Similarly the expression \lq\lq If $x$ is  a very general point of $X$, then...\rq\rq\ is shorthand for \lq\lq There exists a countable collection of closed nowhere dense $Y_i\in X$  such that if $x\in (X\setminus \bigcup_i Y_i)$ then...\rq\rq. 

From now on we will  denote  by  $\CH(X)$ the group of rational equivalence classes of cycles with \emph{rational} coefficients.  Thus if  $Z_1,Z_2$  are cycles  on $X$ then $Z_1\equiv Z_2$  means that for some non-zero integer $\ell$ the cycles $\ell Z_1$,  $\ell Z_2$ are integral  and rationally equivalent. If $Z$ is a cycle on $X$ we will often use the same symbol (i.e.~$Z$) for the rational equivalence class represented by $Z$.

\medskip
\n
{\bf Acknowledgements:} It is a pleasure to thank Angelo Lopez for useful exchanges on Noether-Lefschetz results.
\section{The family of surfaces containing  given curves}\label{sec:sezuno}
\setcounter{equation}{0}
\subsection{Threefolds with trivial Chow groups}
Throughout the paper $V$ is an integral smooth projective threefold.
\begin{hyp}\label{hyp:ipovu}
The cycle class map $cl\colon \CH(V)\lra H(V;\QQ)$ is an isomorphism.
\end{hyp} 
The archetypal  such  $V$ is $\PP^3$. A larger class of examples is given by   $3$-folds with an algebraic cellular decomposition (see Ex.~1.9.1 of~\cite{fulton}), and conjecturally the above assumption is equivalent to vanishing of $H^{p,q}(V)$ for $p\not=q$.  An integral smooth projective threefold has \emph{trivial} Chow group   if~\Ref{hyp}{ipovu} holds.
\begin{clm}\label{clm:ciaodue}
Let $V$ be as above, in particular it has trivial Chow group. The natural map 
\begin{equation}\label{simdiv}
\Sym^2\CH^1(V)\lra\CH^2(V)
\end{equation}
is surjective.
\end{clm}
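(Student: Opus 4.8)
The plan is to show something slightly stronger than the claim: that already the single operation of intersecting with a fixed ample class surjects $\CH^1(V)$ onto $\CH^2(V)$. By~\Ref{hyp}{ipovu} the cycle class map $cl$ is an isomorphism, and since it carries intersection products to cup products it is in fact an isomorphism of graded \emph{rings}. I will therefore be free to test surjectivity in cohomology and transport the conclusion back to the Chow groups.

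Concretely, fix an ample divisor $H$ on $V$ and let $L\colon\CH^1(V)\to\CH^2(V)$ be the map $D\mapsto H\cdot D$. Under $cl$ this corresponds to cup product with $c_1(\cO_V(H))\in H^2(V;\QQ)$. Since $\dim V=3$, the hard Lefschetz theorem says precisely that cup product with an ample class induces an isomorphism $H^2(V;\QQ)\xrightarrow{\sim}H^4(V;\QQ)$. Because $cl$ restricts to isomorphisms $\CH^1(V)\cong H^2(V;\QQ)$ and $\CH^2(V)\cong H^4(V;\QQ)$, the map $L$ is itself an isomorphism, and in particular surjective.

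It remains to note that $L$ factors through the symmetric square: it factors as $\CH^1(V)\to\Sym^2\CH^1(V)\to\CH^2(V)$, where the first map sends $D$ to the symmetric product of the classes $H$ and $D$, and the second is the natural map~\eqref{simdiv}. As the composite $L$ is surjective, so is~\eqref{simdiv}, which is exactly the assertion of the claim. I expect no genuine obstacle here: the only two points that need care are that~\Ref{hyp}{ipovu} really yields a ring isomorphism, legitimising both the passage to cohomology and the identification of $L$ with cup product, and that hard Lefschetz is invoked in the correct degree for a threefold, namely the isomorphism $H^2\to H^4$.
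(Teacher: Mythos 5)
Your proposal is correct and is essentially the paper's own argument: the paper also deduces the claim from hard Lefschetz (surjectivity of $\Sym^2 H^2(V;\QQ)\to H^4(V;\QQ)$) combined with the ring-compatibility of the cycle class isomorphism of \Ref{hyp}{ipovu}. Your factorisation through the single Lefschetz operator $D\mapsto H\cdot D$ is just a slightly more explicit packaging of the same two ingredients.
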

\begin{proof}
The natural map  $\Sym^2 H^2(V;\QQ)\to H^4(V;\QQ)$ is surjective by Hard Lefscehtz. The claim follows because of~\Ref{hyp}{ipovu}.
\end{proof}
\subsection{Standard relations}\label{subsec:relstan}
Let $V$ be an integral smooth projective $3$-fold with trivial Chow group. 
Let $X\subset V$ be a closed surface, and $i\colon X\hra V$ be the inclusion map. Let $\cR^s(X)\subset\CH^s(X)$ be the image of the restriction map
\begin{equation}
\begin{matrix}
\CH^s(V) & \lra & \CH^s(X) \\
\xi & \mapsto & i^{*}\xi
\end{matrix}
\end{equation}
Notice that $\cR^2(X)\subset\DCH_0(X)$ by~\Ref{clm}{ciaodue}.
Suppose that  $C\subset X$ is an integral smooth curve. We will assume that $C\cdot C$ makes sense in $\CH_0(X)$, for example that will be the case if $X$ is $\QQ$-factorial. 
We will list elements of the kernel of the map
\begin{equation}
\begin{matrix}
\cR^2(X)\oplus \cR^1(X)\oplus \cR^0(X) & \lra & \DCH_0(X) \\
(\alpha,\beta,\gamma) & \mapsto & \alpha+C\cdot \beta+\gamma\cdot C\cdot C
\end{matrix}
\end{equation}
Let $j\colon C\hra V$ be the inclusion map. By  Cor.~8.1.1 of~\cite{fulton}  the following relation holds in $\CH_0(X)$:
\begin{equation}\label{tiraspingi}
i^{*}(j_{*}[C])=C\cdot c_1(\cN_{X/\PP^3})=C\cdot i^{*}\cO_V(X).
\end{equation}
Thus
\begin{equation}\label{guardaunpo}
\alpha_C-C\cdot i^{*}\cO_V(X)=0,
\end{equation}
where  $\alpha_C:=i^{*}(j_{*}C)\in  \cR^2(X)$.
Equation~\eqref{guardaunpo} is the \emph{first standard relation}. 

Now suppose that there exists $\xi\in\CH^1(V)$ such that
\begin{equation}\label{canesterno}
c_1(K_C)=\xi|_C.
\end{equation}
(Recall that Chow groups are with $\QQ$-coefficients, thus~\eqref{canesterno} means that there exists an integer $n>0$ such  that $K_C^{\otimes n}$ is the pull-back of a line-bundle on $V$.)
By adjunction  for $X\subset V$ and for $C\subset X$,
\begin{equation}\label{ernia}
C\cdot C +C\cdot(i^{*}K_V+i^{*}\cO_X(X))\equiv C\cdot i^{*}\xi.
\end{equation}
Thus there exists $\beta_C\in \cR^1(X)$  such that 
\begin{equation}\label{princeton}
\beta_C\cdot C-C\cdot C=0.
\end{equation}
The above is the \emph{second standard relation} (it holds assuming~\eqref{canesterno}). 
\begin{expl}
Let $V=\PP^3$, let $X\subset\PP^3$ be a smooth surface of degree $d$, and let $C\subset X$ be a smooth curve. The subgroup of  $\CH_0(X)$ spanned by    intersections of linear combinations of $H:=c_1(\cO_X(1))$ and $C$  
has rank  at most $2$. In fact the first standard relation reads $d C\cdot H=(\deg C)H\cdot H$. Suppose that   $c_1(K_C)=m C\cdot H$, where $m\in\QQ$. With this hypothesis, the second standard relation reads $C\cdot C=(m+4-d)C\cdot H$, and hence $C\cdot C, C\cdot H, H\cdot H$ span a rank-$1$ subgroup. In particular a curve of genus $0$ or $1$ does not add anything to the rank of $\DCH_0(X)$.
\end{expl}
\subsection{Surfaces containing disjoint curves}\label{subsec:supcurve}
Let $V$ be a smooth projective $3$-fold with trivial Chow group and $C_1,\ldots,C_n\subset V$ be pairwise disjoint  integral smooth projective curves. Let $C:=C_1\cup\ldots\cup C_n$ and let $\pi\colon W\to V$ be  the blow-up of $C$. Let $E$ be the exceptional divisor of $\pi$, and let $E_j$, 
 for $j\in\{1,\ldots,n\}$,  be the irreducible component of $E$ mapping to $C_j$. Let $H$ be an ample divisor on $V$. For $j\in\{1,\ldots,n\}$ we let
\begin{equation}
\Sigma_j:=\{S\in |\pi^{*}H-E| \mid \text{$\pi(S)$ is singular at some point of $C_j$}\},\qquad \Sigma:=\cup_{j=1}^n \Sigma_j.
\end{equation}
Let $S\in |\pi^{*}H-E|$, and  let $X:=\pi(S)$. Then $S\in\Sigma_j$ if and only if $S$ contains one (at least) of the fibers of $E_j\to C_j$, or, equivalently, 
   the map $S\to X$ given by restriction of $\pi$  is \emph{not} an isomorphism over $C_j$. We will always assume that $(\pi^{*}H-E)$ is very ample on $W$; with this hypothesis   $\Sigma_j$ is irreducible of codimension $1$, or empty (compute the codimension of the loci of  $S\in |\pi^{*}(H)-E|$ which contain one or two fixed  fibers  of $E_k\to C_k$).  
Suppose that  $H$ is  sufficiently ample:  then, in addition,   if  $S\in\Sigma_k$ is generic the surface $X=\pi(S)$ is smooth except for one ODP (ordinary double point) belonging to $C_k$,  and  the set of reducible $S\in|\pi^{*}H-E|$ is of large codimension in $|\pi^{*}H-E|$. We will assume that both of these facts hold (but we do not assume that $H$ is \lq\lq sufficiently ample\rq\rq, because we want to prove effective results).
\begin{hyp}\label{hyp:dieci}
Let $C_1,\ldots,C_n\subset V$ and $H$  be as above, in particular $H$ is ample on $V$, and $(\pi^{*}H-E)$ is very ample on $W$. Suppose   that 
\begin{enumerate}
\item
 for $j\in\{1,\ldots,n\}$, and  $S\in\Sigma_j$   generic, the surface $\pi(S)$ is  smooth except for one ODP (ordinary double point) belonging to $C_j$, and 
\item
the set of reducible $S\in|\pi^{*}H-E|$ has codimension  at least $3$ in  $|\pi^{*}H-E|$. 
\end{enumerate}
\end{hyp}
Assume that~\Ref{hyp}{dieci} holds, and let $S\in\Sigma_j$ be  generic. Then there is a  unique singular point of $\pi(S)$, call it $x$, and  the line $\pi^{-1}(x)$ is contained in $S$.  
\begin{hyp}\label{hyp:ipnole}
Let $C_1,\ldots,C_n\subset V$ and $H$  be as above. Suppose that~\Ref{hyp}{dieci} holds, and that in addition the following  hold:
\begin{enumerate}
\item
If $A\subset|\pi^{*}H-E|$ is an integral closed codimension-$1$ subset, not equal to one of $\Sigma_1,\ldots,\Sigma_n$, and $S\in A$ is very general, the restriction map  $\CH^1(W)\to \CH^1(S)$ is surjective.
\item
For  $j\in\{1,\ldots,n\}$, $S\in\Sigma_j$  very general, and   $x$  the   unique singular point of $\pi(S)$  
 (an ODP belonging to $C_j$, by~\Ref{hyp}{dieci}),  
 $\CH^1(S)$  is generated by the image  of the restriction map  $\CH^1(W)\to \CH^1(S)$  together with the  class of $\pi^{-1}(x)$. 
\end{enumerate}
\end{hyp}
\begin{rmk}
Let $V=\PP^3$, and fix  $C_1,\ldots,C_n\subset \PP^3$. Let    $d\gg 0$, and $H\in|\cO_{\PP^3}(d)|$. If $S\in\Sigma_j$  is generic, then  $\pi^{-1}(x)$ does \emph{not} belong to the image   of the restriction map  $\CH^1(W)\to \CH^1(S)$.  
\end{rmk}
In the present section we will prove the following result.
\begin{prp}\label{prp:vaccini}
Let $C_1,\ldots,C_n\subset V$ and $H$  be as above, and assume that~\Ref{hyp}{ipnole} holds. Suppose also that for $j\in\{1,\ldots,n\}$ there does \emph{not} exist  $\xi\in\CH^1(V)$ such that  $c_1(K_{C_j})=\xi|_{C_j}$. (Recall that Chow groups are with coefficients in $\QQ$.)
Then for very general smooth $X\in|\cI_C(H)|$  the following hold:
\begin{enumerate}
\item
The map $\CH^2(V)\to \CH_0(X)$ is injective.
\item
 Let $\{\zeta_1,\ldots,\zeta_m\}$ be a basis of $\CH^1(V)$ (as $\QQ$-vector space). Suppose that for very general smooth $X\in|\cI_C(H)|$ 
\begin{equation*}
0=P(\zeta_1|X,\ldots,\zeta_m|X)+r_1 C_1^2+\ldots+r_n C_n^2,
\end{equation*}
where $P\in\QQ[x_1,\ldots,x_m]_2$ is a homogeneous quadratic polynomial. Then  $0=P(\zeta_1,\ldots,\zeta_m)=r_1=\ldots=r_n$. 
\end{enumerate}
\end{prp}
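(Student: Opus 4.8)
The plan is to argue by contradiction, transporting the relation to the blow-up $\pi\colon W\to V$ and the universal family, and extracting from it an identity in $\CH^2(W)$ that~\Ref{hyp}{ipnole} and the non-rational-canonicity of the $C_j$ forbid. First, since $\pi\colon S\to\pi(S)$ is an isomorphism for generic $S\in\Lambda:=|\pi^{*}H-E|$, with $\zeta_l|_X\leftrightarrow\pi^{*}\zeta_l|_S$ and $C_j^2\leftrightarrow(E_j|_S)^2$, the relation in Item~(2) says that the fixed class $\alpha:=P(\pi^{*}\zeta_1,\dots,\pi^{*}\zeta_m)+\sum_j r_jE_j^2\in\CH^2(W)$ satisfies $\alpha|_S=0$ for very general $S\in\Lambda$; Item~(1) is the sub-case $\alpha=\pi^{*}\kappa$ with $\kappa=P(\zeta)\in\CH^2(V)$ (using~\Ref{clm}{ciaodue}). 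Applying the spreading principle to $p_W^{*}\alpha\in\CH^2(\cS)$, whose restriction to the very general fibre vanishes, together with the localization sequence, I would obtain a decomposition $p_W^{*}\alpha=\sum_k(i_{A_k})_{*}\beta_k$, where the $A_k\subset\Lambda$ are the codimension-one components of a proper closed subset and each $\beta_k\in\CH^1(\cS_{A_k})$ is a divisor on the restricted family $\cS_{A_k}=p_\Lambda^{-1}(A_k)$.

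Next I would turn this into an equation on $W$ using that $p_W\colon\cS\to W$ is a projective bundle, the fibre over $w$ being the hyperplane of surfaces through $w$. Capping with $\xi^{\dim\Lambda-1}$ for $\xi=p_\Lambda^{*}\cO_\Lambda(1)$ and pushing down gives $\alpha=p_{W*}(\xi^{\dim\Lambda-1}\cdot p_W^{*}\alpha)=\sum_k\sum_s(p_W)_{*}[D_k(t_{k,s})]$, where $\{t_{k,s}\}=A_k\cap B$ for a general line $B\subset\Lambda$ and $D_k(t):=\beta_k|_{S_t}$; loci of codimension $\ge2$ drop out since $B$ misses them. Now~\Ref{hyp}{ipnole} applies: a rigidity argument identifies $D_k(t)$, for very general $t\in A_k$, with $\delta_k|_{S_t}$ for a fixed $\delta_k\in\CH^1(W)$ when $A_k\notin\{\Sigma_1,\dots,\Sigma_n\}$, and with $\delta_k|_{S_t}+c_k[R_t]$ when $A_k=\Sigma_j$, where $R_t=\pi^{-1}(x_t)$ is the exceptional line over the node $x_t\in C_j$. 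Since $(p_W)_{*}[\delta_k|_{S_t}]=\delta_k\cdot(\pi^{*}H-E)$ and $(p_W)_{*}[R_t]=[R_t]$, the right-hand side becomes an explicit element of the blow-up decomposition $\CH^2(W)=\pi^{*}\CH^2(V)\oplus\bigoplus_j\pi_{E_j}^{*}\CH_0(C_j)$.

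I would then compare the two sides component by component. Applying $\pi_{*}$, which annihilates every $[R_t]$-term, projects the identity onto $\CH^2(V)$; combined with the triviality of $\CH(V)$ this controls the $P$-part and, in the sub-case of Item~(1), yields the desired injectivity. The genuinely new content of Item~(2) lives in the fibre part over each $C_j$: the left-hand side contributes $r_j$ times the fibre component of $E_j^2$, whereas on the right the only term not already of the form $\xi|_{C_j}$ with $\xi\in\CH^1(V)$ is the node-locus contribution $c_{\Sigma_j}\sum_s[x_{t_s}]\in\CH_0(C_j)$ coming from $A_k=\Sigma_j$.

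I expect the main obstacle to be the evaluation of this node-locus term. Using the adjunction formula $K_S=\pi^{*}(K_V+H)|_S$ together with adjunction for the exceptional line $R_t\subset S$ at the node---so that, as in the first standard relation~\eqref{guardaunpo}, $c_1(N_{C_j/V})$ and hence $c_1(K_{C_j})$ enter the fibre component of $E_j^2$---one must show that, after subtracting all classes restricted from $V$, the fibre-component equation over $C_j$ collapses to $r_j\,c_1(K_{C_j})\equiv0$ modulo $\{\xi|_{C_j}:\xi\in\CH^1(V)\}$. Because by hypothesis no $\xi\in\CH^1(V)$ satisfies $c_1(K_{C_j})=\xi|_{C_j}$, this forces $r_j=0$ for every $j$; feeding this back into the $\pi^{*}\CH^2(V)$-component then gives $P(\zeta)=0$, completing Item~(2). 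The two delicate ingredients are the rigidity of the Noether--Lefschetz divisor classes $D_k(t)$ over each $A_k$ and the adjunction bookkeeping that isolates $c_1(K_{C_j})$ with coefficient exactly $r_j$ while verifying that every competing term is a restriction from $V$.
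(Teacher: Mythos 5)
Your overall architecture --- argue by contradiction, spread the relation out over the universal surface, reduce Item~(1) to Item~(2) via \Ref{clm}{ciaodue}, invoke \Ref{hyp}{ipnole} together with a rigidity/constancy statement for the divisor classes over each component $A_k$ --- matches the paper's, and those steps are sound (the rigidity you worry about follows from $\CH^1(W)\overset{\sim}{\to}H^2(W;\QQ)$ and the Lefschetz theorem on the fibres). The fatal step is the conversion of the vertical decomposition into an identity in $\CH^2(W)$ by capping with $\xi^{N-1}$ and applying $p_{W*}$. Under the projective-bundle decomposition $\CH^2(\cS)\cong\QQ\xi^2\oplus\xi\cdot p_W^{*}\CH^1(W)\oplus p_W^{*}\CH^2(W)$ of~\eqref{chowdue}, the class $p_W^{*}\alpha$ has vanishing $\xi$-components, but your push-forward sends $\xi\cdot p_W^{*}\delta\mapsto\delta\cdot(\pi^{*}H-E)$, so the (completely unconstrained) Noether--Lefschetz divisor classes $\delta_k\in\CH^1(W)$ re-enter the equation. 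Your claim that on the right-hand side ``the only term not already of the form $\xi|_{C_j}$ is the node-locus contribution'' is then false: writing $\delta_k=\pi^{*}\mu_k+\sum_i a_{ki}E_i$, the term $\delta_k\cdot(\pi^{*}H-E)$ contains $-a_{kj}E_j^2$, whose fibre component over $C_j$ is $-a_{kj}c_1(\cN_{C_j/V})$, and this is not a restriction from $V$ --- for exactly the same reason that $c_1(K_{C_j})$ is not. The fibre-component equation therefore only yields $r_j+a_j+c_j=0$ with $a_j,c_j$ unknown, never $r_j=0$. No bookkeeping can repair this after the push-forward: for $V=\PP^3$, $n=1$, $H=dh$, the class $\alpha=E_1^2-d^2\pi^{*}h^2$ (i.e.\ $P=-d^2x_1^2$, $r_1=1$) satisfies your pushed-forward equation on the nose, with $\delta=-(d\pi^{*}h+E_1)$ and no node term, since $-(d\pi^{*}h+E_1)\cdot(d\pi^{*}h-E_1)=E_1^2-d^2\pi^{*}h^2$; so your final system admits solutions of precisely the kind the proposition must exclude.

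The paper avoids this loss of information by never pushing forward: \Ref{lmm}{vernoelef} computes the image of $\Ver^2(\cS/\Lambda)$ under the \emph{projection} onto the summand $p_W^{*}\CH^2(W)$, under which the contributions $p_W^{*}\delta_k\cdot\xi$ and the $\xi^2$-type contributions die identically, and only the classes $\Theta_j$ lying over $\Sigma_j$ survive, with component $E_j\cdot\pi^{*}H-\pi^{*}C_j$ (\Ref{prp}{egizio} and \Ref{crl}{egizio}; that computation is also exactly the evaluation of your node-locus $0$-cycle, which you deferred as the ``main obstacle''). This gives the clean identity $P(\pi^{*}\zeta_1,\ldots,\pi^{*}\zeta_m)+\sum_j r_jE_j^2=\sum_j s_j(E_j\cdot\pi^{*}H-\pi^{*}C_j)$ with no $\delta$-contamination, and then \Ref{lmm}{conti} finishes: vanishing of the Abel--Jacobi component forces $r_jc_1(\cN_{C_j/V})=s_jc_1(H)|_{C_j}$, hence $r_j=0$ by the non-rationally-canonical hypothesis, after which vanishing of the cohomology class and intersection with $E_i$ give $s_i=0$ and finally $P(\zeta_1,\ldots,\zeta_m)=0$. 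If you insist on your formulation, you would need to recover the component separation by pairing against further classes (e.g.\ computing $p_{W*}(\xi^{N-2}\cdot p_W^{*}\eta\cdot(-))$ for enough $\eta$ to eliminate the $\delta_k$), which amounts to reconstructing the projection used in \Ref{lmm}{vernoelef}.
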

The proof of~\Ref{prp}{vaccini} will be given in~\Ref{subsec}{demolink}. Throughout the present section we let $V$, $C$, $W$, $E$ and $H$ be as above. 
\subsection{The universal surface}
Assume that~\Ref{hyp}{dieci}  holds. Let   
\begin{eqnarray}
\Lambda & := & |\pi^{*}(H)-E| \\
\cS & := & \{(x,S) \in W\times \Lambda \mid x\in S\}.
\end{eqnarray}
Let $p_W\colon\cS\to W$ and $p_{\Lambda}\colon\cS\to  \Lambda$ be the forgetful maps. Thus we have
\begin{equation}\label{triangolo} 
\xymatrix{
&  & \cS \ar[dl]_{p_W} \ar[dr]^{p_{\Lambda}} &   \\
 V &  \ar[l]_{\pi} W &   &    \Lambda}  
 \end{equation}
Let $N:=\dim\Lambda$.  Since  $(\pi^{*}(H)-E)$ is very ample it is globally generated and hence the map $p_W$ is a $\PP^{N-1}$-fibration. It follows that  $\cS$ is smooth and
\begin{equation}
\dim\cS=(N+2).
\end{equation}
\begin{dfn}
Let $\Ver^q(\cS/\Lambda)\subset\CH^q(\cS)$ be the subspace spanned  by rational equivalence classes of codimension-$q$ integral closed subsets $Z\subset \cS$ such that the dimension of $p_{\Lambda}(Z)$ is strictly smaller than the dimension of $Z$. 
\end{dfn}
The result below is an instance of the spreading principle.
\begin{clm}\label{clm:marmite}
Keep notation and assumptions as above, in particular~\Ref{hyp}{dieci}  holds. Let  $Q\in\QQ[x_1,\ldots,x_m,y_1,\ldots,y_n]_2$ be a homogeneous polynomial of degree $2$ and   let $\zeta_1,\ldots,\zeta_m\in\CH^1(V)$.   Then 
\begin{equation}\label{simonyi}
Q(\zeta_1|_X,\ldots, \zeta_m|_X, c_1(\cO_X(C_1)),\ldots,c_1(\cO_X(C_n)))=0
\end{equation}
  for all smooth  $X\in|\cI_{C}(H)|$ if and only if  
\begin{equation}\label{svaporaz}
 p_W^{*} Q(\pi^{*}\zeta_1,\ldots,\pi^{*}\zeta_m,E_1,\ldots,E_n)\in\Ver^2(\cS/\Lambda).
\end{equation}
\end{clm}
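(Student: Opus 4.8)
The plan is to deduce the claim from the localization sequence of the family $p_{\Lambda}\colon\cS\to\Lambda$, by reducing both~\eqref{simonyi} and~\eqref{svaporaz} to the vanishing of the restriction of
$\omega:=p_W^{*}Q(\pi^{*}\zeta_1,\ldots,\pi^{*}\zeta_m,E_1,\ldots,E_n)\in\CH^2(\cS)$
to the generic fibre of $p_{\Lambda}$, and then to invoke the spreading principle to pass between the generic fibre and the closed fibres.

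First I would compute the restriction of $\omega$ to a fibre. For $t\in\Lambda$ with $S_t:=p_{\Lambda}^{-1}(t)$ smooth and $\pi|_{S_t}\colon S_t\to X:=\pi(S_t)$ an isomorphism, the composite $S_t\hra\cS\xrightarrow{p_W}W$ is the inclusion of $S_t$ in $W$; hence $\omega|_{S_t}=Q(\pi^{*}\zeta_1,\ldots,\pi^{*}\zeta_m,E_1,\ldots,E_n)|_{S_t}$. Since $\pi^{*}\zeta_j|_{S_t}=\zeta_j|_X$, and, under $\pi|_{S_t}$, the divisor $E_j\cap S_t$ maps isomorphically onto $C_j\subset X$, this equals $Q(\zeta_1|_X,\ldots,\zeta_m|_X,c_1(\cO_X(C_1)),\ldots,c_1(\cO_X(C_n)))$. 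Thus~\eqref{simonyi} says exactly that $\omega$ restricts to $0$ on the general closed fibre of $p_{\Lambda}$.

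Next I would identify $\Ver^2(\cS/\Lambda)$ intrinsically. As $\dim\cS=N+2$, a class lies in $\Ver^2(\cS/\Lambda)$ precisely when it is a combination of classes of $N$-dimensional subvarieties $\Gamma$ with $p_{\Lambda}(\Gamma)$ contained in a proper closed $Y\subsetneq\Lambda$, i.e.\ $\Gamma\subset p_{\Lambda}^{-1}(Y)$. Writing the localization sequence $\CH_N(p_{\Lambda}^{-1}(Y))\to\CH^2(\cS)\to\CH^2(p_{\Lambda}^{-1}(\Lambda\setminus Y))\to 0$ and passing to the direct limit over proper closed $Y$, the image of the left-hand term is exactly $\Ver^2(\cS/\Lambda)$, while $\varinjlim_U\CH^2(p_{\Lambda}^{-1}(U))=\CH_0(\cS_\eta)$, where $\cS_\eta$ is the generic fibre over $\CC(\Lambda)$. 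Hence $\Ver^2(\cS/\Lambda)=\ker\big(\CH^2(\cS)\to\CH_0(\cS_\eta)\big)$, so~\eqref{svaporaz} is equivalent to $\omega|_{\cS_\eta}=0$.

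It then remains to compare the generic fibre with the closed fibres. One direction is formal: if $\omega|_{\cS_\eta}=0$ then $\omega|_{p_{\Lambda}^{-1}(U)}=0$ for a dense open $U\subset\Lambda$, whence $\omega|_{S_t}=0$ for every $t\in U$, i.e.\ \eqref{simonyi} holds on a dense open set of smooth members; this gives \eqref{svaporaz}$\Rightarrow$\eqref{simonyi}. For the converse, if~\eqref{simonyi} holds for all smooth $X$ then it holds for the very general member, and the spreading principle---realizing the finitely many rational equivalences witnessing $\omega|_{S_t}=0$ in a family over a dense open, all defined over a countable subfield of $\CC$ over which $\cS\to\Lambda$, $\omega$ and the $C_j$ are defined---forces $\omega|_{\cS_\eta}=0$, i.e.\ \eqref{svaporaz}. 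The localization identification of $\Ver^2(\cS/\Lambda)$ is routine; the substantive step, and the reason the statement is labelled an instance of the spreading principle, is this last passage from a very general closed fibre to the generic fibre, which I would carry out by citing a standard form of the principle rather than reproving it. I would also flag the harmless gap between \lq\lq for all smooth $X$\rq\rq\ and \lq\lq for the general member\rq\rq: the implication needed later is \eqref{simonyi}$\Rightarrow$\eqref{svaporaz}, where \lq\lq all\rq\rq\ trivially gives \lq\lq very general\rq\rq, while the reverse implication yields~\eqref{simonyi} on a dense open subset of the smooth surfaces, which is what the application in~\Ref{prp}{vaccini} uses.
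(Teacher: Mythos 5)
Your proposal follows essentially the same route as the paper: restriction of $\omega$ to a closed fibre via the isomorphism $S_t\cong X$, the Bloch--Srinivas/Voisin spreading principle to pass from very general closed fibres to the generic fibre, and the localization sequence to translate verticality into vanishing away from a proper closed subset of $\Lambda$. Your identification $\Ver^2(\cS/\Lambda)=\ker\bigl(\CH^2(\cS)\to\CH_0(\cS_\eta)\bigr)$ is correct and is just a direct-limit repackaging of the paper's localization step (the paper works instead with a fixed proper closed $B\subset\Lambda$), so this part is fine.

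The genuine gap is the one you flag and then wave off. The claim is stated as an equivalence in which \eqref{simonyi} is required \emph{for all} smooth $X\in|\cI_{C}(H)|$, so the implication \eqref{svaporaz}$\Rightarrow$\eqref{simonyi} must reach every smooth member; your argument stops at a dense open subset of them. The paper closes exactly this gap with one further, necessary, step: the locus of smooth $X$ for which \eqref{simonyi} holds is a countable union of closed subsets of $\Lambda_{sm}$ (the open subset of $\Lambda$ parametrizing smooth surfaces), and since it contains a dense open subset of the irreducible variety $\Lambda_{sm}$, while an irreducible complex variety is never a countable union of proper closed subvarieties, one of those closed subsets must equal $\Lambda_{sm}$, hence the locus is all of $\Lambda_{sm}$. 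You should include this; without it you have proved a weaker statement than the one asserted. Moreover, your justification for omitting it is not accurate: while the proof of \Ref{prp}{vaccini} indeed uses only the forward implication, the paper does invoke the reverse implication for an \emph{arbitrary} smooth $X$ (not just a generic one) when, after \Ref{crl}{atzeco}, it identifies the vertical class $p_W^{*}(E_j\cdot \pi^{*}H-\pi^{*}C_j)$ with the first standard relation \eqref{guardaunpo} on every smooth member of $|\cI_{C}(H)|$.
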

\begin{proof}
Suppose that~\eqref{simonyi} holds  for all smooth  $X\in|\cI_{C}(H)|$. Let $S\in\Lambda$ be generic,  $X:=\pi(S)$. Then $X$ is smooth and  the restriction of $\pi$ to $S$ defines an isomorphism $\varphi\colon S\overset{\sim}{\lra} X$, thus by our assumption
$$p_W^{*}Q(\pi^{*}\zeta_1,\ldots,\pi^{*}\zeta_m,E_1,\ldots,E_n)|_{S}=0.$$
Since $S$ is generic in $\Lambda$
it follows (see~\cite{blochsrini,voisinbook}) that there exists an open dense subset $\cU\subset\Lambda$   such that
\begin{equation}\label{mare}
p_W^{*}Q(\pi^{*}\zeta_1,\ldots,\pi^{*}\zeta_m,E_1,\ldots,E_n)|_{p_{\Lambda}^{-1}\cU}=0.
\end{equation}
(We recall that Chow groups are with rational coefficients, if we consider integer coefficients then~\eqref{mare} holds only up to torsion.) Let $B:=(\Lambda\setminus\cU)$. By the localization exact sequence
$$\CH_{N}(p_{\Lambda}^{-1}B)\lra \CH_{N}(\cS)\lra \CH_{N}(p_{\Lambda}^{-1}\cU)\lra 0$$
$p_W^{*}Q(\pi^{*}\zeta_1,\ldots,\pi^{*}\zeta_m,E_1,\ldots,E_n)$ is represented by an $N$-cycle supported on $p_{\Lambda}^{-1}B$, 
and hence~\eqref{svaporaz} holds because  $\dim B<N$.  Next, suppose  that~\eqref{svaporaz} holds. Then, by definition, the left-hand side of~\eqref{svaporaz} is represented by an $N$-cycle whose support is mapped by $p_{\Lambda}$ to a proper closed subset  $B\subset\Lambda$.  
Thus there exists an open dense $\cU\subset\Lambda$ such that the restriction of $p_W^{*} Q(\pi^{*}\zeta_1,\ldots,\pi^{*}\zeta_m,E_1,\ldots,E_n)$ to $p_{\Lambda}^{-1}\cU$ vanishes, e.g.~$\cU=\Lambda\setminus B$. By shrinking $\cU$ we may assume that  for $S\in\cU$ the surface $X:=\pi(S)$ is smooth. Let $S\in\cU$: then 
$0=p_W^{*} Q(\pi^{*}\zeta_1,\ldots,\pi^{*}\zeta_m,E_1,\ldots,E_n)|_S$, and since $X\cong S$ it follows that~\eqref{simonyi} holds for  $X=\pi(S)$. On the other hand the locus of  smooth  $X\in|\cI_{C}(H)|$  such that~\eqref{simonyi} holds is a countable union of closed subsets of $\Lambda_{sm}$ (the open dense subset of $\Lambda$ parametrizing smooth surfaces); since it contains an open dense subset of  $\Lambda_{sm}$ it is equal to  $\Lambda_{sm}$. 
\end{proof}
\subsection{The Chow groups of $\cS$ and $W$}
Assume that~\Ref{hyp}{dieci}  holds. Let $\xi\in\CH^1(\cS)$ be the pull-back of the hyperplane class on $\Lambda$ via the map $p_{\Lambda}$ of~\eqref{triangolo}. Since $p_W$ is the projectivization of a rank-$N$ vector-bundle on $W$  and $\xi$ restricts to the hyperplane class on each fiber of $p_W$ the Chow ring $\CH(\cS)$ is the $\QQ$-algebra generated by $p_W^{*}\CH(W)$ and $\xi$, with ideal of relations  generated by a single relation in codimension $N$.  We have $N\ge 3$ because$(\pi^{*}H-E)$ is very ample  by~\Ref{hyp}{dieci}; thus 
\begin{equation}\label{chowdue}
\begin{matrix}
\QQ\oplus \CH^1(W) \oplus \CH^2(W)  & \overset{\sim}{\lra} &  \CH^2(\cS) \\
(a_0,a_1,a_2) & \mapsto & a_0\xi^2+ p_W^{*}(a_1)\cdot\xi+ p_W^{*}(a_2)
\end{matrix}
\end{equation}
is  an isomorphism. The Chow groups  $\CH_q(W)$ are computed by first describing $\CH_q(E_j)$ for $j\in\{1,\ldots, n\}$, and then considering the localization exact sequence 
$$\bigoplus_j\CH_q(E_j)\lra\CH_q(W)\lra \CH_q(W\setminus (E_1\cup\ldots\cup E_n))\lra 0.$$
 One gets an isomorphism
\begin{equation}\label{gruppodiv}
\begin{matrix}
 \CH^1(V) \oplus \QQ^n  & \overset{\sim}{\lra} &  \CH^1(W) \\
(a,t_1,\ldots,t_n) & \mapsto & \pi^{*}a+\sum_{j=1}^n t_j E_j
\end{matrix}
\end{equation}
and an exact sequence
\begin{equation}
0\lra \CH^2(W)_{\hom}\lra \CH^2(W)\overset{cl}{\lra} H^4(W;\QQ)\lra 0
\end{equation}
where $\CH^2(W)_{\hom}$ is described as follows. 
Let $\rho_j\colon E_j\to C_j$ be the restriction of the blow-up map $\pi$, and $\sigma_j\colon E_j\hra W$ be the inclusion map;  
then we have an Abel-Jacobi isomorphism 
\begin{equation}\label{abjac}
\begin{matrix}
AJ\colon \CH^2(W)_{\hom} & \overset{\sim}{\lra} &  \bigoplus_{j=1}^n\CH_0(C_j)_{\hom} \\
\alpha & \mapsto & (\rho_{1,*}(\sigma_1^{*}\alpha),\ldots,\rho_{n,*}(\sigma_n^{*}\alpha)
\end{matrix}
\end{equation}
Let $AJ_j$ be the $j$-th component of the map $AJ$.
\begin{lmm}\label{lmm:conti}
Assume that~\Ref{hyp}{dieci} holds. Let 
$$\omega:=\pi^{*}\alpha+\sum_{j=1}^n E_j\cdot\pi^{*}\beta_j+\sum_{j=1}^n\gamma_j E_j\cdot E_j,$$
where $\alpha\in\CH^2(V)$, $\beta_j\in\CH^1(V)$, and $\gamma_j\in\QQ$ for $j\in\{1,\ldots, n\}$. Then the following hold:

\begin{enumerate}
\item
The cohomology class of $\omega$ vanishes if and only if 
\begin{equation}\label{sommaga}
\alpha=\sum_{j=1}^n\gamma_j C_j,
\end{equation}
and for all $j\in\{1,\ldots, n\}$ 
\begin{equation}\label{gradobi}
 \deg(\beta_j\cdot C_j)=-\gamma_j\deg(\cN_{C_j/V}).
\end{equation}
\item
Suppose that~\eqref{sommaga} and~\eqref{gradobi} hold. Then for  $j\in\{1,\ldots, n\}$
\begin{equation}\label{abejac}
AJ_j(\omega)=-\gamma_j c_1(\cN_{C_j/V})-c_1(\beta_j|_{C_j}).
\end{equation}
\end{enumerate}
\end{lmm}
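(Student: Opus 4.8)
The plan is to compute everything by pushing forward to $V$ and by restricting to the exceptional divisors $E_j$, using the standard intersection-theoretic relations on a blow-up. First I would recall the basic formulas on $W$: for the blow-up $\pi\colon W\to V$ of the smooth curves $C_j$, the self-intersection $E_j\cdot E_j$ equals $-\sigma_{j*}c_1(\mathcal{O}_{E_j}(1))$ (where $\mathcal{O}_{E_j}(1)$ is the tautological quotient bundle of the projectivized normal bundle), and the key projection/excess formulas $\pi_*(E_j\cdot\pi^*\beta_j)=0$, $\pi_*(\gamma_j E_j\cdot E_j)=-\gamma_j C_j$, together with $\rho_{j*}c_1(\mathcal{O}_{E_j}(1))=\deg(\mathcal N_{C_j/V})\cdot[\mathrm{pt}]$ in the relevant degrees. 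These are exactly the ingredients from Fulton needed to separate the three summands of $\omega$.

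For part (1), I would compute the cohomology class by combining its pushforward $\pi_*\mathrm{cl}(\omega)$ with its restrictions $\sigma_j^*\mathrm{cl}(\omega)$. Applying $\pi_*$ to $\omega$ kills the middle term, sends $\pi^*\alpha$ to $\alpha$ and $\gamma_j E_j\cdot E_j$ to $-\gamma_j C_j$, so that $\pi_*\mathrm{cl}(\omega)=\mathrm{cl}(\alpha-\sum_j\gamma_j C_j)$; since $\pi_*$ is injective on cohomology classes pulled from $V$ plus exceptional contributions, vanishing of $\mathrm{cl}(\omega)$ forces \eqref{sommaga}. Then I would feed \eqref{sommaga} back in and restrict to each $E_j$: because the $C_j$ are pairwise disjoint, $\sigma_j^*\pi^*\beta_k$ and $\sigma_j^*E_k$ vanish for $k\neq j$, so the restriction $\sigma_j^*\mathrm{cl}(\omega)$ reduces to a class on $E_j$ built from $\rho_j^*(\beta_j|_{C_j})$ and powers of $c_1(\mathcal{O}_{E_j}(1))$. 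Pushing this down to $C_j$ via $\rho_{j*}$ and taking degrees yields precisely the numerical condition \eqref{gradobi}; conversely, once \eqref{sommaga} and \eqref{gradobi} hold one checks that $\pi_*\mathrm{cl}(\omega)=0$ and each $\sigma_j^*\mathrm{cl}(\omega)=0$, which by the structure of $H^4(W;\QQ)$ (the pullback $H^4(V)$ together with the classes supported on the $E_j$) suffices to conclude $\mathrm{cl}(\omega)=0$.

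For part (2), assuming \eqref{sommaga} and \eqref{gradobi} so that $\omega\in\CH^2(W)_{\hom}$, I would compute $AJ_j(\omega)=\rho_{j*}(\sigma_j^*\omega)$ directly term by term. Again disjointness kills all cross terms with index $k\neq j$; the term $\pi^*\alpha$ restricts to $\sigma_j^*\pi^*\alpha=\rho_j^*(\alpha|_{C_j})$, which is a pullback of a zero-cycle class on $C_j$ and becomes homologically trivial after imposing \eqref{sommaga}, so it does not contribute to the Abel--Jacobi image. The surviving contributions come from $\sigma_j^*(E_j\cdot\pi^*\beta_j)$ and $\sigma_j^*(\gamma_j E_j\cdot E_j)$: using $\sigma_j^*E_j=-c_1(\mathcal{O}_{E_j}(1))$ and the relation expressing $c_1(\mathcal{O}_{E_j}(1))^2$ in terms of $\rho_j^*c_1(\mathcal N_{C_j/V})$ and $\rho_j^*c_1(\mathcal{O}_{E_j}(1))$, then applying $\rho_{j*}$ and the projection formula, I expect to land exactly on $-\gamma_j c_1(\mathcal N_{C_j/V})-c_1(\beta_j|_{C_j})$, which is \eqref{abejac}.

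The main obstacle I anticipate is bookkeeping rather than conceptual: getting the signs and the precise form of the Grothendieck relation for $c_1(\mathcal{O}_{E_j}(1))$ on the projectivized normal bundle correct, and correctly identifying which terms are homologically trivial (hence drop out of the Abel--Jacobi image) versus which survive as genuine zero-cycle classes of degree zero on $C_j$. In particular, care is needed to confirm that after \eqref{sommaga} the class $\alpha|_{C_j}$ really is homologically trivial on each $C_j$ so that it contributes nothing to $AJ_j$, and to track the degree constraint \eqref{gradobi} as exactly the condition making the $E_j$-supported part of $\mathrm{cl}(\omega)$ vanish.
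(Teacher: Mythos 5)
Your plan is, in substance, the paper's own proof in dual packaging. The paper checks vanishing of $cl(\omega)$ by pairing with divisor classes: by Poincar\'e duality and the algebraicity of $H^2(W;\QQ)$, $cl(\omega)=0$ iff $\deg(\omega\cdot\pi^{*}\zeta)=0$ for all $\zeta\in\CH^1(V)$ and $\deg(\omega\cdot E_i)=0$ for all $i$; by the projection formula these two tests are exactly your two projections $\pi_{*}cl(\omega)$ and $\rho_{j,*}\sigma_j^{*}cl(\omega)$, and both arguments hinge on the same key identity $\rho_{j,*}c_1(\cO_{E_j}(E_j))^2=-c_1(\cN_{C_j/V})$, which is~\eqref{vivachern} in the paper. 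Your part~(1), including the converse via the blow-up decomposition of $H^4(W;\QQ)$, is correct as outlined.

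There is, however, a genuine error in your treatment of the $\pi^{*}\alpha$ term in part~(2), one of bookkeeping and one of logic. First, $\alpha\in\CH^2(V)=\CH_1(V)$ is a \emph{curve} class on the threefold $V$, so its Gysin restriction to the curve $C_j$ lies in $\CH_{-1}(C_j)=0$: it is not ``a zero-cycle class on $C_j$'', it is zero, and correspondingly $\sigma_j^{*}\pi^{*}\alpha=0$ on the nose (represent $\alpha$ by a $\QQ$-cycle disjoint from $C$, which the moving lemma permits since $\dim\alpha+\dim C<\dim V$). Second, and more seriously, the inference ``homologically trivial, hence does not contribute to the Abel--Jacobi image'' is backwards: the map $AJ_j$ of~\eqref{abjac} is defined precisely on homologically trivial classes and is in general nonzero there --- on $\CH_0(C_j)_{\hom}$ it is the Abel--Jacobi isomorphism onto the Jacobian of $C_j$. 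If $\alpha|_{C_j}$ really were a nonzero homologically trivial $0$-cycle, nothing would force its contribution to $AJ_j(\omega)$ to vanish, so the verification you propose for this step would fail as stated. Fortunately the term vanishes outright for the dimension reason above, so~\eqref{abejac} comes out exactly as you compute from the two surviving terms. The same care with dimensions also corrects your preliminary formula $\rho_{j*}c_1(\cO_{E_j}(1))=\deg(\cN_{C_j/V})\cdot[\mathrm{pt}]$: in fact $\rho_{j*}c_1(\cO_{E_j}(1))=[C_j]$, and the statement you want is $\rho_{j*}\bigl(c_1(\cO_{E_j}(1))^2\bigr)=-c_1(\cN_{C_j/V})$, i.e.~\eqref{vivachern}.
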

\begin{proof}
Since the cohomology class map $cl\colon\CH^1(V)\to H^2(V;\QQ)$ is a surjection (by hypothesis), also the  cohomology class map $cl\colon\CH^1(W)\to H^2(W;\QQ)$ is surjective. By Poincar\`e duality it follows that 
$cl(\omega)=0$ if and only if 
$\deg(\omega\cdot\xi)=0$ for all $\xi\in\CH^1(W)$.
By~\eqref{gruppodiv} we must test $\xi=\pi^{*}\zeta$ with $\zeta\in\CH^1(V)$ and $\xi=E_i$ for $i\in\{1,\ldots, n\}$. We have
\begin{equation}\label{croazia}
\deg(\omega\cdot\pi^{*}\zeta)=
\deg\left(\left(\alpha-\sum_{j=1}^n\gamma_j C_j\right)\cdot\zeta\right).
\end{equation}
Since the cycle map $\CH^2(V)\to H^4(V;\QQ)$ is an isomorphism,  it follows that $\deg(\omega\cdot\pi^{*}\zeta)=0$ for all  $\zeta\in\CH^1(V)$ if and only if~\eqref{sommaga} holds.
Next, we test  $\xi=E_i$. In $\CH_0(C_i)$ 
\begin{equation}\label{vivachern}
\rho_{i,*}c_1(\cO_{E_i}(E_i))^2=-c_1(\cN_{C_i/V}),
\end{equation}
and hence
\begin{equation}\label{serbia}
\deg(\omega\cdot E_i)=
-\deg(\beta_i\cdot C_i)-\gamma_i\deg(\cN_{C_i/V}).
\end{equation}
This proves Item~(1).
Item~(2) follows from Equation~\eqref{vivachern}.  
\end{proof}
\begin{rmk}\label{rmk:secstand}
By~\Ref{lmm}{conti}  the kernel of the map
\begin{equation}
\begin{matrix}
\CH^2(V)\oplus\bigoplus_{k=1}^n \CH^1(V) \oplus\bigoplus_{k=1}^n\QQ & \lra & \CH^2(W) \\
(\alpha,\beta_1,\ldots\beta_n,\gamma_1,\ldots,\gamma_n) & \mapsto & 
\pi^{*}\alpha+\sum_{j=1}^n E_j\cdot\pi^{*}\beta_j+\sum_{j=1}^n\gamma_j E_j\cdot E_j
\end{matrix}
\end{equation}
is generated over $\QQ$ by the classes $E_j\cdot \pi^{*}\beta$, where  
$ \beta\in\CH^1(V)$ and  $\beta|_{C_j}=0$, 
together with the classes
\begin{equation}
 \pi^{*}[C_j]+E_j\cdot \pi^{*}\beta+ E_j\cdot E_j,
\end{equation}
where $\beta\in\CH^1(V)$,   $\deg(\beta\cdot C_j)=-\deg(\cN_{C_j/V})$, and 
\begin{equation}\label{quattrogatti}
-c_1(\cN_{C_j/V})-c_1(\beta|_{C_j})=0. 
\end{equation}
Next  notice that~\eqref{quattrogatti} holds  if  and only if $c_1(K_{C_j})$ is equal to the restriction of a class in $\CH^1(V)$ i.e.~\eqref{canesterno} holds. Assume that this is the case, and
that $X\in|\cI_{C}(H)|$ is a surface  smooth at all points of $C_j$. Let $S\in|\pi^{*}H-E|$ be the strict transform of $S$. 
Then  $S$ is isomorphic to $X$ over $C_j$, and restricting to $S$ the equation $\pi^{*}[C_j]+E_j\cdot \pi^{*}\beta+ E_j\cdot E_j=0$ we get 
  the second standard relation~\eqref{princeton}. 
\end{rmk}
\subsection{A vertical   cycle on $\cS$}
According to~\Ref{clm}{marmite}, for every codimension-$2$ relation that holds between  $\cO_X(C_{1}),\ldots,  \cO_X(C_{n})$ and  restrictions to $X$ of divisors on $V$, where $X$ is an arbitrary smooth member of $\in|\cI_C(H)|$, there is a polynomial in classes of $\pi^{*}\CH^1(V)$ and the classes of the exceptional divisors of $\pi$ which is \lq\lq responsible\rq\rq\ for the relation, i.e.~when we pull-it back to $\cS$ it is a vertical class. 
We have shown that $\pi^{*}[C_j]+E_j\cdot \pi^{*}\beta+ E_j\cdot E_j$ is the class responsible for the  second standard relation~\eqref{princeton}, 
see~\Ref{rmk}{secstand}, and in fact this class vanishes. In the present subsection we will write out a cycle responsible for the first standard relation~\eqref{guardaunpo}, this time the pull-back to $\CH^2(\cS)$ is a non-zero vertical class. 
We record for later use the following formulae:
\begin{eqnarray}
\sigma_{j,*}\rho_j^{*} c_1(\cN_{C_j/V}) & = & \pi^{*}C_j+E_j\cdot E_j, \label{sorpresa} \\ 
p_{W,*}(\xi^{N}) & = & (\pi^{*}H-E). \label{primasegre}
\end{eqnarray}
The first formula follows from the \lq\lq Key formula\rq\rq\ for $\pi^{*}C_j$, see Prop.~6.7 of~\cite{fulton}. The second formula is immediate (recall that $N=\dim\Lambda$). 
Let $j\in\{1,\ldots,n\}$. By~\Ref{hyp}{dieci} there exists an open dense $U\subset\Sigma_j$ such that, if $S\in U$, then $S\cdot E_j={\bf L}_x+ Z$, where $x\in C_j$ is the unique singular point of $\pi(S)$, ${\bf L}_x:=\pi^{-1}(x)$, and $Z$ is the residual divisor (whose support does not contain ${\bf L}_x$). It follows that
\begin{equation}
E_j\cap p_{\Lambda}^{-1}(U)=\cV_j+\cZ_j,
\end{equation}
where, for every $S\in U$, the restrictions to $E_j\cap S$ of $\cV_j$, $\cZ_j$  are equal to  ${\bf L}_x$ and  $Z$, respectively.  
We let 
\begin{equation}\label{eccoteta}
\Theta_j:=\ov{\cV}_j. 
\end{equation}
Thus  $p_{\Lambda}(\Theta_j)=\Sigma_j$, and the generic fiber of $\Theta_j\to\Sigma_j$   is a projective line.  By~\Ref{hyp}{dieci}   $\Theta_j$ is of pure codimension $2$  in $\cS$  (or empty), and hence 
\begin{equation}\label{tetavert}
\Theta_j\in\Ver^2(\cS/\Lambda). 
\end{equation}
The result below will be instrumental in  writing out the class of $\Theta_j$ in $\CH^2(\cS)$  according to Decomposition~\eqref{chowdue}.
\begin{prp}\label{prp:egizio}
Let $j\in\{1,\ldots,n\}$. Then
\begin{equation}\label{eccelente}
p_{W,*}(\Theta_j\cdot \xi^{N-1})=2E_j\cdot \pi^{*}H-E_j\cdot E_j-\pi^{*}C_j.
\end{equation}
\end{prp}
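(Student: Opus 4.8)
The plan is to realise $\Theta_j$ as (the total space of) a projective bundle over the curve $C_j$ and then to evaluate $p_{W,*}(\Theta_j\cdot\xi^{N-1})$ by the projective-bundle pushforward formula. Put $L:=\cO_W(\pi^{*}H-E)$ and $V_0:=H^0(W,L)$, so $\Lambda=\PP(V_0)$. For $x\in C_j$ set ${\bf L}_x:=\rho_j^{-1}(x)\subset E_j$; since $\pi^{*}H\cdot{\bf L}_x=0$ and $E\cdot{\bf L}_x=-1$ (as $\cO_W(E)|_{E_j}=\cO_{E_j}(-1)$), one has $L\cdot{\bf L}_x=1$, and by very ampleness the surfaces of $\Lambda$ containing ${\bf L}_x$ form a linear subspace $\Lambda_{{\bf L}_x}=\PP(\cK_x)\subset\Lambda$ of codimension $2$. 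Here $\cW:=\rho_{j,*}(L|_{E_j})$ is locally free of rank $2$ (cohomology and base change, using $R^1\rho_{j,*}=0$), the evaluation $V_0\otimes\cO_{C_j}\to\cW$ is surjective, and $\cK:=\ker(V_0\otimes\cO_{C_j}\to\cW)$ is a subbundle of rank $N-1$. The generic point of $\cV_j$ is a pair $(w,S)$ with $w\in{\bf L}_{\rho_j(w)}$ and $S\in\Lambda_{{\bf L}_{\rho_j(w)}}$; since both sides are integral of dimension $N$, this identifies $\Theta_j=\ov{\cV}_j$ with the fibre product $E_j\times_{C_j}\PP(\cK)$. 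I write $p\colon\Theta_j\to E_j$ (which is $p_W|_{\Theta_j}$) and $q\colon\Theta_j\to\PP(\cK)$ for the two projections.

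The key reduction is the identity $\xi|_{\Theta_j}=q^{*}c_1(\cO_{\PP(\cK)}(1))$. Indeed $p_{\Lambda}|_{\Theta_j}$ factors through the morphism $\PP(\cK)\to\Lambda=\PP(V_0)$ induced by the inclusion $\cK\hra V_0\otimes\cO_{C_j}$, and this morphism pulls $\cO_{\Lambda}(1)$ back to $\cO_{\PP(\cK)}(1)$. Since $p_W\circ\iota_{\Theta_j}=\sigma_j\circ p$, where $\iota_{\Theta_j}\colon\Theta_j\hra\cS$ is the inclusion, the projection formula gives
$$p_{W,*}(\Theta_j\cdot\xi^{N-1})=\sigma_{j,*}\,p_{*}\big((\xi|_{\Theta_j})^{N-1}\big)=\sigma_{j,*}\,p_{*}\,q^{*}\big(c_1(\cO_{\PP(\cK)}(1))^{N-1}\big).$$
Flat base change for this fibre square turns $p_{*}q^{*}$ into $\rho_j^{*}\pi_{\cK,*}$, and the Segre-class formula—with the convention fixed by~\eqref{primasegre}, under which $s_1=-c_1$—yields $\pi_{\cK,*}(c_1(\cO_{\PP(\cK)}(1))^{N-1})=s_1(\cK)=-c_1(\cK)=c_1(\cW)$, the last equality because $c_1(\cK)+c_1(\cW)=0$. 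Hence
$$p_{W,*}(\Theta_j\cdot\xi^{N-1})=\sigma_{j,*}\rho_j^{*}c_1(\cW).$$

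It then remains to compute $c_1(\cW)$ and to rewrite the result. Pushing the line bundle $L|_{E_j}=\cO_{E_j}(1)\otimes\rho_j^{*}\cO_{C_j}(H)$ (of relative degree one over $C_j$) forward along the $\PP^1$-bundle $\rho_j$, by Grothendieck–Riemann–Roch and using $\rho_{j,*}\,c_1(\cO_{E_j}(1))=[C_j]$ together with~\eqref{vivachern}, gives $c_1(\cW)=2\,c_1(\cO_{C_j}(H))-c_1(\cN_{C_j/V})$. Now $\sigma_{j,*}\rho_j^{*}c_1(\cO_{C_j}(H))=\sigma_{j,*}\big((\pi^{*}H)|_{E_j}\big)=E_j\cdot\pi^{*}H$ by the projection formula, while $\sigma_{j,*}\rho_j^{*}c_1(\cN_{C_j/V})=\pi^{*}C_j+E_j\cdot E_j$ is precisely~\eqref{sorpresa}. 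Substituting these into $\sigma_{j,*}\rho_j^{*}c_1(\cW)$ produces $2E_j\cdot\pi^{*}H-E_j\cdot E_j-\pi^{*}C_j$, which is~\eqref{eccelente}.

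The main obstacle is the first paragraph: one must check carefully that $\Theta_j$ is the full reduced fibre product $E_j\times_{C_j}\PP(\cK)$ with multiplicity one—equivalently, that $\cW$ is locally free of rank $2$, that the evaluation $V_0\otimes\cO_{C_j}\to\cW$ is everywhere surjective (so that $\cK$ is a genuine subbundle and $\Lambda_{{\bf L}_x}$ has codimension exactly $2$ for every $x\in C_j$, including special points), and that $\ov{\cV}_j$ meets a generic fibre of $p_W$ in a single reduced $\PP^{N-2}$. Granting this, the remainder is bookkeeping with Chern and Segre classes; its only delicate point is to keep the projective-bundle sign conventions consistent throughout with~\eqref{vivachern},~\eqref{sorpresa} and~\eqref{primasegre}, and the value of $c_1(\cW)$ can be cross-checked against the target~\eqref{eccelente} through the relation~\eqref{sorpresa}.
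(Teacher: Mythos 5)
Your proof is correct, and it takes a genuinely different route from the paper's. The paper represents $\xi^{N-1}$ by $p_{\Lambda}^{-1}(R)$ for a \emph{generic} pencil $R=\PP(\la\alpha,\beta\ra)\subset\Lambda$, identifies $R\cap\Sigma_j$ with the images $q_1,\ldots,q_s$ of the transverse intersection points of $\divisore(\alpha|_{E_j})$ and $\divisore(\beta|_{E_j})$, and computes the class of the $0$-cycle $q_1+\cdots+q_s$ on $C_j$ as the pushforward of $c_2(\cF)$, where $\cF=\phi^{*}(\cN^{\vee}_{C_j/V}\otimes\cO_{C_j}(H))\otimes\psi^{*}\cO_R(1)$ on $C_j\times R$ carries a section vanishing precisely over the $q_i$; the cycle-theoretic content there rests on genericity and transversality of $R$ with respect to $\Sigma_j$. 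You never choose a representative of $\xi^{N-1}$: you identify $\Theta_j$ globally as the fibre product $E_j\times_{C_j}\PP(\cK)$, pull $\xi$ back from $\PP(\cK)$, and finish by flat base change plus the Segre-class formula $\pi_{\cK,*}\bigl(c_1(\cO_{\PP(\cK)}(1))^{N-1}\bigr)=s_1(\cK)=c_1(\cW)$. The two arguments secretly revolve around the same rank-$2$ bundle, since your $\cW=\rho_{j,*}(L|_{E_j})\cong\cN^{\vee}_{C_j/V}\otimes\cO_{C_j}(H)$ is exactly the bundle the paper twists by $\psi^{*}\cO_R(1)$, and both proofs conclude with~\eqref{sorpresa}; incidentally, you can get $c_1(\cW)$ directly from the projection formula and $\rho_{j,*}\cO_{E_j}(1)\cong\cN^{\vee}_{C_j/V}$, with no need for Grothendieck--Riemann--Roch. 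What your route buys: it is choice-free, it concentrates all the delicate points into the single identification $\Theta_j\cong E_j\times_{C_j}\PP(\cK)$ --- which you justify adequately: very ampleness of $L$ gives surjectivity of $V_0\to H^0({\bf L}_x,L|_{{\bf L}_x})$ at \emph{every} $x\in C_j$, so $\cK$ is a corank-$2$ subbundle, and then integrality of the fibre product together with the dimension count forces $\ov{\cV}_j$ to equal it --- and it yields as a by-product the $\PP^{N-2}$-fibration structure of $\Theta_j\to E_j$ that the paper re-derives at the start of the proof of \Ref{crl}{egizio}. What the paper's route buys: it needs only the generic structure of $\Theta_j$ over $\Sigma_j$ and involves no projective-bundle convention bookkeeping, whereas your care in pinning the convention to~\eqref{primasegre} is genuinely necessary, since with the opposite convention the middle term of~\eqref{eccelente} would come out with the wrong sign.
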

\begin{proof}
Let $\alpha,\beta\in H^0(W,\pi^{*}(H)-E)$ be generic. Then $\divisore(\alpha|_{E_j})$ and  $\divisore(\beta|_{E_j})$ are smooth divisors intersecting transversely at points $p_1,\ldots,p_s$. Let $q_i:=\pi(p_i)$ for $i\in\{1,\ldots,s\}$.  Let   $R=\PP(\la \alpha,\beta\ra)\subset\Lambda$; thus  $p_{\Lambda}^{-1}R$ represents $\xi^{N-1}$. Given $p_i$, there exists $[\lambda_i,\mu_i]\in\PP^1$ such that $\divisore(\lambda_i \alpha+ \mu_i\beta)$ contains $\pi^{-1}(q_i)$, and hence $[\lambda_i \alpha+ \mu_i\beta]\in R\cap\Sigma_j$. Conversely, every point of $R\cap\Sigma_j$ is of this type. The line $R$ intersects transversely $\Sigma_j$ because it is generic, and hence
\begin{equation}\label{chefatica}
p_{W,*}(\Theta_j\cdot \xi^{N-1})=\sigma_{j,*}\rho_j^{*}(q_1+\ldots+q_s).
\end{equation}
Thus in order to compute $p_{W,*}(\Theta_j\cdot \xi^{N-1})$ we must determine the class of the $0$-cycle $q_1+\ldots+q_s$. Let $\phi\colon C_j\times R\to C_j$ and  $\psi\colon C_j\times R\to R$ be the projections and $\cF$ the rank-$2$ vector-bundle on $C_j\times R$ defined by
$$\cF:=\phi^{*}(\cN^{\vee}_{C_j/V}\otimes\cO_{C_j}(H))\otimes\psi^{*}\cO_R(1).$$
The composition of the natural maps
\begin{equation}\label{tempietto}
\la \alpha,\beta\ra \hra H^0(W,\pi^{*}H-E)\lra H^0(E_j,\cO_{E_j}(\pi^{*}H-E))\lra  H^0(C_j,\cN^{\vee}_{C_j/V}\otimes\cO_{C_j}(H))
\end{equation}
defines a section $\tau\in H^0(\cF)$ whose zero-locus consists of points $p'_1,\ldots,p'_s$ such that $\pi(p'_i)=q_i$. Now, the zero-locus of $\tau$ represents $c_2(\cF)$, and hence
$$p_{W,*}(\Theta_j\cdot \xi^{N-1})=\sigma_{j,*}(\rho_j^{*}(\phi_{*}c_2(\cF)))$$
by~\eqref{chefatica}.  The  formula
$$c_2(\cF)=\phi^{*}(2 c_1(\cO_C(H))-c_1(\cN_{C/\PP^3}))\cdot \psi^{*}c_1(\cO_R(1)).$$
gives
\begin{equation}\label{intermedio}
p_{W,*}(\Theta_j\cdot \xi^{N-1})=2E_j\cdot \pi^{*}H-\sigma_{j,*}\left(\rho_j^{*} c_1(\cN_{C_j/ V}))\right).
\end{equation}
Then~\eqref{eccelente}  follows from the above equality together with~\eqref{sorpresa}.
\end{proof}
\begin{crl}\label{crl:egizio}
Let $j\in\{1,\ldots,n\}$. Then
\begin{equation}
\Theta_j=\xi\cdot p_W^{*}E_j+p_W^{*}(E_j\cdot \pi^{*}H-\pi^{*}C_j).
\end{equation}
\end{crl}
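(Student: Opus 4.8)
The plan is to use the isomorphism~\eqref{chowdue} to write $\Theta_j = a_0\,\xi^2 + p_W^{*}(a_1)\cdot\xi + p_W^{*}(a_2)$ for unique $a_0\in\QQ$, $a_1\in\CH^1(W)$, $a_2\in\CH^2(W)$, and to determine the three coefficients by intersecting with powers of $\xi$ and pushing forward along $p_W$. Because $p_W$ is the projectivization of a rank-$N$ bundle with $\xi$ the relative hyperplane class, one has $p_{W,*}(\xi^{k})=0$ for $k\le N-2$, $p_{W,*}(\xi^{N-1})=1$, and $p_{W,*}(\xi^{N})=\pi^{*}H-E$ by~\eqref{primasegre}. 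Writing $s_1:=\pi^{*}H-E$ and intersecting the expansion successively with $\xi^{N-3}$, $\xi^{N-2}$, $\xi^{N-1}$ (note $N\ge 3$), the projection formula yields the triangular system $a_0=p_{W,*}(\Theta_j\cdot\xi^{N-3})$, then $a_0 s_1+a_1=p_{W,*}(\Theta_j\cdot\xi^{N-2})$, then $a_0 s_2+a_1 s_1+a_2=p_{W,*}(\Theta_j\cdot\xi^{N-1})$; the second Segre class $s_2$ will not be needed once $a_0$ is shown to vanish.

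The geometric input is the structure of $p_W$ restricted to $\Theta_j$. Since $\Theta_j=\ov{\cV}_j$ and $\cV_j$ is built from pairs $(w,S)$ with $w$ lying on the line ${\bf L}_{x(S)}\subset E_j$, the image $p_W(\Theta_j)$ lies in $E_j$; moreover it equals $E_j$, as the singular point $x(S)$ sweeps out all of $C_j$ when $S$ varies in $\Sigma_j$, so the lines ${\bf L}_{x}$ fill up $E_j$. For general $e\in E_j$ the fibre of $p_W\colon\Theta_j\to E_j$ over $e$ is the set of $S\in\Lambda$ containing the single fixed line ${\bf L}_{\rho_j(e)}$, which is a linear subsystem $\PP^{N-2}\subset\Lambda$; since $\xi$ is the pullback of the hyperplane class of $\Lambda$, it restricts to the hyperplane class of this $\PP^{N-2}$.

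These two facts settle the first two equations by dimension count. The cycle $\Theta_j\cdot\xi^{N-3}$ has dimension $3$ yet maps into $E_j$, which has dimension $2$, so its pushforward vanishes and $a_0=0$. The cycle $\Theta_j\cdot\xi^{N-2}$ has dimension $2$; intersecting each fibre $\PP^{N-2}$ with $\xi^{N-2}$ leaves one reduced point, so the induced map to $E_j$ has degree $1$ and hence $a_1=p_{W,*}(\Theta_j\cdot\xi^{N-2})=E_j$.

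It then remains to read off $a_2$ from the third equation, whose right-hand side is $2E_j\cdot\pi^{*}H-E_j\cdot E_j-\pi^{*}C_j$ by~\Ref{prp}{egizio}. Substituting $a_0=0$ and $a_1=E_j$ gives $a_2=2E_j\pi^{*}H-E_jE_j-\pi^{*}C_j-E_j\cdot(\pi^{*}H-E)$, and since the components of $E$ are pairwise disjoint one has $E_j\cdot E=E_j\cdot E_j$, whence $a_2=E_j\cdot\pi^{*}H-\pi^{*}C_j$. Plugging $a_0,a_1,a_2$ back into the expansion yields exactly the asserted identity. I expect the middle step to be the main obstacle: identifying the fibres of $\Theta_j\to E_j$ with the linear systems of surfaces through a fixed line, and verifying that $\xi$ restricts to their hyperplane class so that the fibrewise degree of $\xi^{N-2}$ is precisely $1$, is where the geometry really enters, while the rest is formal manipulation with Segre classes and~\Ref{prp}{egizio}.
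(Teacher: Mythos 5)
Your proposal is correct and follows essentially the same route as the paper: decompose $\Theta_j$ via~\eqref{chowdue}, use the $\PP^{N-2}$-fibration structure of $\Theta_j\to E_j$ to get the $\xi^2$- and $\xi$-coefficients ($0$ and $E_j$, which the paper asserts in one line and you justify by explicit pushforwards of $\Theta_j\cdot\xi^{N-3}$ and $\Theta_j\cdot\xi^{N-2}$), and then solve for the last coefficient by equating $p_{W,*}(\Theta_j\cdot\xi^{N-1})$ computed from~\eqref{primasegre} with the right-hand side of~\eqref{eccelente}. Your identification of the generic fibre as the linear system of surfaces containing a fixed line ${\bf L}_x$, with $\xi$ restricting to its hyperplane class, is exactly the geometric content behind the paper's unproved fibration claim, so this is a correct filling-in rather than a different argument.
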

\begin{proof}
By~\eqref{chowdue} there exist $\beta_h\in\CH^h(W)$ for $h=0,1,2$ such that 
$$\Theta_j=\xi^2\cdot p_W^{*}\beta_0+\xi\cdot p_W^{*}\beta_1+ p_W^{*}\beta_2.$$
Restricting $p_W$ to $\Theta_j$ we get a $\PP^{N-2}$-fibration $\Theta_j\to E_j$: it follows that $\beta_0=0$ and $\beta_1=E_j$. By~\eqref{primasegre} 
\begin{equation}\label{johan}
p_{W,*}(\Theta_j\cdot\xi^{N-1})=p_{W,*}(\xi^{N}\cdot p_W^{*}E_j+\xi^{N-1}\cdot p_W^{*}\beta_2)=
(E_j\cdot \pi^{*}H-E_j\cdot E_j+\beta_2).
\end{equation}
On the other hand $p_{W,*}(\Theta_j\cdot\xi^{N-1})$ is equal to the right-hand side of~\eqref{eccelente}: equating that expression and the right-hand side of~\eqref{johan} we get 
$\beta_2=(E_j\cdot  \pi^{*}H-\pi^{*}C_j)$.
\end{proof}
\begin{crl}\label{crl:atzeco}
Let $j\in\{1,\ldots,n\}$. Then $p_W^{*}(E_j\cdot \pi^{*}H-\pi^{*}C_j)\in\Ver^2(\cS/\Lambda)$.
\end{crl}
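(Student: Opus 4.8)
The plan is to deduce this directly from~\Ref{crl}{egizio} together with the verticality of $\Theta_j$ already recorded in~\eqref{tetavert}, so that almost no new work is needed. By~\Ref{crl}{egizio} I can rewrite the class in question as
\[
p_W^{*}(E_j\cdot \pi^{*}H-\pi^{*}C_j)=\Theta_j-\xi\cdot p_W^{*}E_j .
\]
Since $\Ver^2(\cS/\Lambda)$ is by definition the $\QQ$-subspace of $\CH^2(\cS)$ spanned by classes of vertical subvarieties, it is closed under subtraction; and $\Theta_j\in\Ver^2(\cS/\Lambda)$ by~\eqref{tetavert}. Hence everything reduces to showing that $\xi\cdot p_W^{*}E_j\in\Ver^2(\cS/\Lambda)$.

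For this I would isolate a general principle: any class of the form $\xi\cdot p_W^{*}\gamma$, with $\gamma$ an effective divisor class on $W$, is vertical. Indeed $\xi=p_{\Lambda}^{*}h$ where $h$ is the hyperplane class of $\Lambda=\PP^N$, so I can represent $\xi$ by $p_{\Lambda}^{-1}(H_{\Lambda})$ for a hyperplane $H_{\Lambda}\subset\Lambda$, while $p_W^{*}E_j$ is represented by the fixed divisor $p_W^{-1}(E_j)$. Because the linear system of hyperplanes on $\PP^N$ is base-point free, a general $H_{\Lambda}$ cuts $p_W^{-1}(E_j)$ properly, so the product $\xi\cdot p_W^{*}E_j$ is represented by the intersection cycle $Z:=p_{\Lambda}^{-1}(H_{\Lambda})\cap p_W^{-1}(E_j)$, each component of which has codimension $2$ in $\cS$, i.e.~dimension $N$. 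On the other hand $p_{\Lambda}(Z)\subseteq H_{\Lambda}$, so $\dim p_{\Lambda}(Z)\le N-1<N$, and every component of $Z$ is therefore vertical. Taking $\gamma=E_j$ gives $\xi\cdot p_W^{*}E_j\in\Ver^2(\cS/\Lambda)$, which completes the argument.

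The only point requiring a little care is the proper-intersection claim, namely that a general hyperplane pull-back $p_{\Lambda}^{-1}(H_{\Lambda})$ meets the fixed divisor $p_W^{-1}(E_j)$ in pure codimension $2$, so that the geometric intersection cycle actually computes the product class. This is immediate from the fact that $p_{\Lambda}^{-1}(H_{\Lambda})$ moves in a base-point-free system while $p_W^{-1}(E_j)$ is fixed, so no component of $p_W^{-1}(E_j)$ lies in the general member (a standard application of Kleiman's theorem or the moving lemma). I expect this to be the sole, and very minor, obstacle; the rest is purely formal once~\Ref{crl}{egizio} is in hand.
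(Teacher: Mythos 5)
Your proposal is correct and follows essentially the same route as the paper: decompose the class as $\Theta_j-\xi\cdot p_W^{*}E_j$ via \Ref{crl}{egizio}, invoke~\eqref{tetavert} for $\Theta_j$, and note that $\xi\cdot p_W^{*}E_j$ is vertical because it is supported on $p_{\Lambda}^{-1}$ of a hyperplane. Your extra discussion of proper intersection only spells out the paper's one-line justification of this last point (and is in fact slightly more than needed, since any $N$-dimensional cycle supported on $p_{\Lambda}^{-1}(H_{\Lambda})$ has image of dimension at most $N-1$ regardless of properness).
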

\begin{proof}
By~\Ref{crl}{egizio} we have
\begin{equation*}
p_W^{*}(E_j\cdot \pi^{*}H-\pi^{*}C_j)=\Theta_j-\xi\cdot p_W^{*}E_j.
\end{equation*}
Now $\Theta_j\in\Ver^2(\cS/\Lambda)$ (see~\eqref{tetavert}) and $\xi\cdot p_W^{*}E_j\in\Ver^2(\cS/\Lambda)$ because it is supported on the inverse image of a hyperplane via $p_{\Lambda}$; thus   $p_W^{*}(E_j\cdot \pi^{*}H-\pi^{*}C_j)\in\Ver^2(\cS/\Lambda)$.
\end{proof}
By~\Ref{clm}{marmite} the relation  $p_W^{*}(E_j\cdot \pi^{*}H-\pi^{*}C_j)\in\Ver^2(\cS/\Lambda)$ gives a relation in $\CH(X)$ for an arbitrary smooth $X\in|\cI_C(H)|$. In fact it gives the  first standard relation~\eqref{guardaunpo}.
\subsection{Proof of the main result of the section}\label{subsec:demolink}
\begin{lmm}\label{lmm:vernoelef}
Assume that~\Ref{hyp}{ipnole} holds.
Then the projection $\CH^2(\cS)\to \CH^2(W)$ determined by~\eqref{chowdue} maps $\Ver^2(\cS/\Lambda)$ to the subspace spanned by  
 \begin{equation}\label{girini}
 (E_1\cdot \pi^{*}H-\pi^{*}C_1),\ldots,(E_j\cdot \pi^{*}H-\pi^{*}C_j),\ldots,(E_n\cdot \pi^{*}H-\pi^{*}C_n).
\end{equation}
\end{lmm}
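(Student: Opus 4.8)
The plan is to use linearity to reduce to a single integral generator of $\Ver^2(\cS/\Lambda)$ and then to split into cases according to the dimension of its image in $\Lambda$, invoking \Ref{hyp}{ipnole} precisely in the divisorial case. Since $\Ver^2(\cS/\Lambda)$ is spanned by the classes $[Z]$ of integral closed codimension-$2$ subsets $Z\subset\cS$ with $\dim p_{\Lambda}(Z)<\dim Z=N$, and the projection $\CH^2(\cS)\to\CH^2(W)$ of \eqref{chowdue} is $\QQ$-linear, it is enough to show that for each such $Z$ the $\CH^2(W)$-component $a_2$ of $[Z]$ lies in the span $\cV$ of the classes in \eqref{girini}. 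Write $A:=p_{\Lambda}(Z)$. The generic fibre of $Z\to A$ has dimension $N-\dim A$ and is contained in the surface $S_t:=p_{\Lambda}^{-1}(t)$, so it has dimension at most $2$; together with $\dim A\le N-1$ this leaves only $\dim A\in\{N-1,N-2\}$.

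First I would record the elementary fact that an integral codimension-$2$ cycle $Z$ with $\dim A\le N-2$ has $a_2=0$: the constraint above forces $\dim A=N-2$ and the generic fibre to be all of $S_t$, so $Z=p_{\Lambda}^{-1}(A)$ and, $p_{\Lambda}$ being flat, $[Z]=p_{\Lambda}^{*}[A]=(\deg A)\,\xi^2$, i.e. $a_0=\deg A$ and $a_1=a_2=0$. This disposes of the case $\dim A=N-2$ and, more importantly, will be reused to kill error terms. The substance is $\dim A=N-1$, where $A$ is an integral divisor of $\Lambda$ and $Z_t:=Z\cap S_t$ is a divisor on $S_t$ for generic $t\in A$. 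Passing to the generic fibre, \Ref{hyp}{ipnole} applies: if $A$ is none of $\Sigma_1,\dots,\Sigma_n$ then $[Z_t]$ is the restriction of a class $\delta\in\CH^1(W)$; if $A=\Sigma_j$ then $[Z_t]=\delta|_{S_t}+m\,[{\bf L}_x]$ for some $\delta\in\CH^1(W)$ and $m\in\QQ$, where ${\bf L}_x=\pi^{-1}(x)$ is the line over the ODP. One checks that $\delta$ can be taken in $\CH^1(W)$ and constant along $A$, using that $W$ does not vary and that $\CH^1(W)$ is a fixed finite-dimensional $\QQ$-vector space.

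The conclusion would then come from comparing $Z$ with an explicit model. Choosing a divisor $D\subset W$ in the class $\delta$, set $M:=(p_W^{*}D)\cap p_{\Lambda}^{-1}(A)$; its class is $p_W^{*}\delta\cdot p_{\Lambda}^{*}[A]=(\deg A)\,p_W^{*}(\delta)\cdot\xi$, so $a_2(M)=0$, while its fibre over generic $t\in A$ is $D\cap S_t\sim\delta|_{S_t}$. In the case $A=\Sigma_j$ I would additionally use $\Theta_j$, whose fibre over $\Sigma_j$ is ${\bf L}_x$ by construction \eqref{eccoteta} and whose $\CH^2(W)$-component is $E_j\cdot\pi^{*}H-\pi^{*}C_j\in\cV$ by \Ref{crl}{egizio}. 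In either case $Z$ and its model ($M$, resp. $M+m\Theta_j$) have rationally equivalent fibres over a dense open of $A$, hence agree after restriction to the generic fibre $S_\eta$. By the localization sequence for $\cS_A\to A$ together with the spreading principle (as in the proof of \Ref{clm}{marmite} and \cite{blochsrini}), the difference $[Z]-[M]$ (resp. $[Z]-[M]-m[\Theta_j]$) is represented by a codimension-$2$ cycle supported over a proper closed $A'\subsetneq A$, so $\dim A'\le N-2$. By the elementary fact above such a cycle has vanishing $\CH^2(W)$-component, whence $a_2(Z)=a_2(M)=0$ when $A\ne\Sigma_j$ for all $j$, and $a_2(Z)=m\,(E_j\cdot\pi^{*}H-\pi^{*}C_j)\in\cV$ when $A=\Sigma_j$.

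The main obstacle is this final spreading step: upgrading the fibrewise Noether--Lefschetz statement—rational equivalence of divisor classes on the very general fibre—to an identity of global classes in $\CH^2(\cS)$ valid modulo cycles supported over loci of dimension $\le N-2$. The care lies in the bookkeeping, namely in guaranteeing that the error cycles really do have image of dimension $\le N-2$ (so that the elementary fact applies and they contribute nothing to $a_2$), and in organising the $A=\Sigma_j$ case so that the genuinely new class ${\bf L}_x$ is absorbed by $\Theta_j$ and thus lands in $\cV$ rather than forcing $a_2=0$.
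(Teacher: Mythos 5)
Your proposal is correct and follows essentially the same route as the paper: the dichotomy $\dim p_{\Lambda}(Z)\in\{N-1,N-2\}$, the identification $Z=p_{\Lambda}^{-1}(p_{\Lambda}(Z))$ in the codimension-$2$ case, and in the divisorial case the use of \Ref{hyp}{ipnole} plus spreading and localization to write $[Z]$ as a model class (of the form $p_W^{*}\Gamma\cdot p_{\Lambda}^{*}[A]$, plus $m\Theta_j$ when $A=\Sigma_j$, handled via \Ref{crl}{egizio}) up to error cycles supported over codimension-$2$ loci, which are killed by the first case. Your ``elementary fact'' and explicit model $M$ are just a repackaging of the paper's computation that $\Xi=a\xi^2$ and $p_W^{*}(\Gamma)\cdot p_{\Lambda}^{*}([Y])=b\,p_W^{*}(\Gamma)\xi$ have vanishing $\CH^2(W)$-component.
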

\begin{proof}
Let $Z\subset\cS$ be an irreducible closed codimension-$2$ subset of $\cS$ such that  
\begin{equation}
\dim p_{\Lambda}(Z)<\dim Z=N.
\end{equation}
 Since the fibers of $p_{\Lambda}$ are surfaces,
\begin{equation}
\dim p_{\Lambda}(Z)=
\begin{cases}
N-2, & \text{or} \\
N-1.
\end{cases}
\end{equation}
Suppose that $\dim p_{\Lambda}(Z)=N-2$. We claim that
\begin{equation}\label{contrimm}
Z=p_{\Lambda}^{-1}(p_{\Lambda}(Z)).
\end{equation}
Since $Z\subset p_{\Lambda}^{-1}(p_{\Lambda}(Z))$, it will suffice to prove that $p_{\Lambda}^{-1}(p_{\Lambda}(Z))$ is irreducible of dimension $N$. First we notice that every irreducible component of $p_{\Lambda}^{-1}(p_{\Lambda}(Z))$ has dimension at least $N$. In fact, letting $\iota\colon p_{\Lambda}(Z)\hra \Lambda$ be the inclusion and  $\Delta_{\Lambda}\subset \Lambda\times \Lambda$ the diagonal, $p_{\Lambda}^{-1}(p_{\Lambda}(Z))$ is identified with  $(\iota,p_{\Lambda})^{-1}\Delta_{\Lambda}$, and the claim follows because $\Delta_{\Lambda}$ is a l.c.i.~of codimension $N$. 
Since every fiber of $p_{\Lambda}$ has dimension $2$, it follows that every irreducible component of  $p_{\Lambda}^{-1}(p_{\Lambda}(Z))$ dominates  $p_{\Lambda}(Z)$. 
On the other hand, since $\cod(p_{\Lambda}(Z),\Lambda)=2$, there exists an open dense $U\subset p_{\Lambda}(Z)$ such that $p_{\Lambda}^{-1}(t)$ is irreducible for all  $t\in U$  by~\Ref{hyp}{dieci}, and hence $p_{\Lambda}^{-1}(U)$ is irreducible  of dimension $N$. It follows that there is a single  irreducible component of  $p_{\Lambda}^{-1}(p_{\Lambda}(Z))$ dominating $p_{\Lambda}(Z)$, and hence $p_{\Lambda}^{-1}(p_{\Lambda}(Z))$ is irreducible (of dimension $N$).
We have proved~\eqref{contrimm}.  
Since $\Lambda$ is a projective space, $p_{\Lambda}([Z])$ is a multiple of $c_1(\cO_{\Lambda}(1))^2$. It follows that the class of $Z$ is a multiple of $\xi^2$ and hence  the projection $\CH^2(\cS)\to \CH^2(W)$  maps it to $0$. Now assume that $\dim p_{\Lambda}(Z)=N-1$. Let $Y:=p_{\Lambda}(Z)$. For $t\in\Lambda$, we let 
$S_t:=p_{\Lambda}^{-1}(t)$. 
 We distinguish between the two cases:
\begin{enumerate}
\item
 $p_{\Lambda}(Z)\notin\{\Sigma_1,\ldots,\Sigma_n\}$.
\item
There exists $j\in\{1,\ldots,n\}$ such that $p_{\Lambda}(Z)=\Sigma_j$.
\end{enumerate}
Suppose that~(1) holds. Let $Y^{sm}\subset Y$ be the subset of smooth points. If $t\in Y^{sm}$,  we may intersect the cycles $Z$ and $S_t$ in $p_{\Lambda}^{-1}(Y)$ (because $S_t$ is a l.c.i.), and the resulting cycle class $Z\cdot S_t$ belongs to $\CH^1(S_t)$.   By~\Ref{hyp}{ipnole} there exists $\Gamma\in\CH^1(W)$ such that $\Gamma|_{S_t}=Z\cdot S_t$  for $t\in Y^{sm}$.  
It follows that there exists an open dense $U\subset Y^{sm}$  such that 
\begin{equation*}
\Gamma|_{p_{\Lambda}^{-1}(U)}\equiv Z|_{p_{\Lambda}^{-1}(U)}.
\end{equation*}
(Recall that Chow groups are with $\QQ$-coefficients.)  By the localization sequence applied to $p_{\Lambda}^{-1}(U)\subset p_{\Lambda}^{-1}(Y)$, it follows that there exists a cycle $\Xi\in \CH_N(p_{\Lambda}^{-1}(Y\setminus U))$ such that  
\begin{equation*}
[Z]= \Xi+p_W^{*}(\Gamma)\cdot p_{\Lambda}^{*}([Y]).
\end{equation*}
Here, by abuse of notation, we mean cycle classes in $\CH_N(\cS)$: thus  $[Z]$ and $\Xi$ are actually the push-forwards of the corresponding classes in 
$\CH_N(p_{\Lambda}^{-1}(Y)$ and  $\CH_N(p_{\Lambda}^{-1}(Y\setminus U))$ via the obvious closed embeddings.
By~\eqref{contrimm} $\Xi$ is represented by a linear combination of varieties $p_{\Lambda}^{-1}(B_i)$, where $B_1,\ldots,B_m$ are the irreducible components of $Y\setminus U$; it follows that $\Xi=a\xi^2$ for some $a\in\QQ$. On the other hand $[Y]\in \CH^1(\Lambda)=\QQ c_1(\cO_{\Lambda}(1))$, and hence 
$p_W^{*}(\Gamma)\cdot p_{\Lambda}^{*}([Y])=b p_W^{*}(\Gamma)\xi$ for some $b\in\QQ$. 
 It follows that  the projection $\CH^2(\cS)\to \CH^2(W)$  maps $Z$ to $0$. Lastly  suppose that Item~(2) holds.  Arguing as above, one shows that there exist  
 $\Gamma\in\CH^1(W)$, an open dense $U\subset Y$, a cycle $\Xi\in \CH_N(p_{\Lambda}^{-1}(Y\setminus U))$, and $a\in\QQ$ such that  
\begin{equation*}
[Z]= \Xi+p_W^{*}(\Gamma)\cdot p_{\Lambda}^{*}([Y])+a\Theta_j.
\end{equation*}
By~\Ref{crl}{egizio}   the projection $\CH^2(\cS)\to \CH^2(W)$  maps $[Z]$ to $a(E_j\cdot \pi^{*}H-\pi^{*}C_j)$. This proves that $\Ver^2(\cS/\Lambda)$ is mapped into the subspace spanned by the elements of~\eqref{girini}. Since $[\Theta_j]$ is a vertical class and  is mapped to $(E_j\cdot \pi^{*}H-\pi^{*}C_j)$, we have proved the lemma.  
\end{proof}
{\it Proof of~\Ref{prp}{vaccini}.}
Let $P\in\QQ[x_1,\ldots,x_m]$ be homogeneous of degree $2$ and $r_1,\ldots,r_n\in\QQ$. The set of smooth $X\in|\cI_C(H)|$ such that 
\begin{equation}\label{relazione}
0=P(\zeta_1|X,\ldots,\zeta_m|X)+r_1 C_1^2+\ldots+r_n C_n^2
\end{equation}
is a countable union of closed subsets of the open dense subset of $|\cI_C(H)|$ parametrizing smooth surfaces. It follows that if the proposition is false then  there exist $P$ and  $r_1,\ldots,r_n$, not all zero, such that~\eqref{relazione} holds for all smooth $X\in |\cI_C(H)|$. Now we argue by contradiction. By~\Ref{clm}{marmite}  
\begin{equation}\label{verticale}
 p_W^{*} (P(\pi^{*}\zeta_1,\ldots,\pi^{*}\zeta_m)+\sum_{j=1}^n r_j E_j^2)\in\Ver^2(\cS/\Lambda).
\end{equation}
 By~\Ref{lmm}{vernoelef} it follows that there exist rationals $s_1,\ldots,s_n$ such that
 \begin{equation*}
P(\pi^{*}\zeta_1,\ldots,\pi^{*}\zeta_m)+\sum_{j=1}^n r_j E_j^2=\sum_{j=1}^n s_j (E_j\cdot \pi^{*}H-\pi^{*}C_j),
\end{equation*}
i.e.,
\begin{equation}\label{calimero}
0=\pi^{*}(P(\zeta_1,\ldots,\zeta_m)+\sum_{j=1}^n s_j C_j)-\sum_{j=1}^n s_j E_j\cdot \pi^{*}H+\sum_{j=1}^n r_j E_j^2.
\end{equation}
Let $\omega$ be the right hand side of~\eqref{calimero}; then the homology class of $\omega$ vanishes, and also the Abel-Jacobi image $AJ(\omega)$,  notation 
 as in~\eqref{abjac}.
 Item~(2) of~\Ref{lmm}{conti}, together with our hypothesis that there does \emph{not} exist  $\xi\in\CH^1(V)$ such that   $c_1(K_{C_j})=\xi|_{C_j}$, gives 
$r_j=0$ for $j\in\{1,\ldots,n\}$. By~\eqref{sommaga} 
\begin{equation}\label{daladier}
P(\zeta_1,\ldots,\zeta_m)+\sum_{j=1}^n s_j C_j=0,
\end{equation}
and hence $\sum_{j=1}^n s_j E_j\cdot \pi^{*}H=0$. Thus
\begin{equation}\label{xylella}
0=E_i\cdot\left(\sum_{j=1}^n s_j E_j\cdot \pi^{*}H\right)=-s_i\deg(C_i\cdot H).
\end{equation}
 for $i\in\{1,\ldots,n\}$. By hypothesis $H$ is ample, and hence $s_i=0$  follows from~\eqref{xylella}. 
Thus  $P(\zeta_1,\ldots,\zeta_m)=0$ by~\eqref{daladier}. 
\qed
\section{Noether-Lefschetz loci for linear systems of surfaces in $\PP^3$ with base-locus}\label{sec:oltrelopez}
\setcounter{equation}{0}
\subsection{The main result}
In the present section  we let $V=\PP^3$. Thus $C_1,\ldots,C_n\subset\PP^3$, and $\pi\colon W\to\PP^3$. We let 
  $\Lambda(d):= |\pi^{*}\cO_{\PP^3}(d)(-E)|$.  For $j\in\{1,\ldots,n\} $ let $\Sigma_j(d)\subset \Lambda(d)$ be the  subset $\Sigma_j$ considered in~\Ref{sec}{sezuno}; thus
 $\Sigma_j(d)$ parametrizes
 surfaces $S\in \Lambda(d)$ such that $\pi(S)$ is singular at some point of $C_j$. Let  
 $\Sigma(d):=\Sigma_1(d)\cup\ldots\cup\Sigma_n(d)$. We denote the tangent sheaf of a smooth variety $X$ by $T_X$.  Below is the main result of the present section.
\begin{thm}\label{thm:peppapig}
Suppose that 
$d\ge 5$, and that the following hold:
\begin{enumerate}
\item
$\pi^{*}\cO_{\PP^3}(d-3)(-E)$ is very ample. 
\item
$H^1(C,T_{C}(d-4))=0$.
\item
The sheaf $\cI_C$  is $(d-2)$-regular.
\item
The curves $C_1,\ldots,C_n$ are not planar.
\end{enumerate}
Then~\Ref{hyp}{ipnole} holds for $H\in |\cO_{\PP^3}(d)|$.
\end{thm}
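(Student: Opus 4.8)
The strategy is to recast both assertions of~\Ref{hyp}{ipnole} as statements about the codimension of Noether--Lefschetz loci inside $\Lambda(d)$, and then to bound those codimensions. I would first record that every smooth $S\in\Lambda(d)$ is regular: since $\pi(S)\subset\PP^3$ is a (possibly nodal) surface of degree $d$ and $S$ is obtained from it by blowing up finitely many points, $H^1(\cO_S)=0$, so $\CH^1(S)=\NS(S)\otimes\QQ$ and the problem is purely one about Néron--Severi groups. By~\eqref{gruppodiv} the image of $\CH^1(W)\to\CH^1(S)$ is spanned by $\pi^{*}c_1(\cO_{\PP^3}(1))|_S$ and the classes $E_1|_S,\ldots,E_n|_S$. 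Thus assertion~(1) asserts that the very general member of any integral codimension-$1$ subset $A\neq\Sigma_j$ carries no further divisor class, while assertion~(2) allows exactly the single extra class $[\mathbf{L}_x]$ when $A=\Sigma_j$.

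First I would dispose of the ``generic'' families by Joshi's criterion~\cite{joshi}. Hypotheses~(1)--(3) are precisely the positivity and cohomological inputs that criterion requires: the very ampleness of $\pi^{*}\cO_{\PP^3}(d-3)(-E)$ supplies the positivity of the relevant adjoint-type system, the vanishing $H^1(C,T_{C}(d-4))=0$ controls the first-order deformations of the configuration $C\subset\PP^3$ inside members of $\Lambda(d)$, and the $(d-2)$-regularity of $\cI_C$ guarantees surjectivity of the restriction maps $H^0(W,-)\to H^0(S,-)$ needed to identify the infinitesimal invariants. Joshi's criterion then yields surjectivity of $\CH^1(W)\to\CH^1(S)$ for the very general $S$ in all but finitely many exceptional families $A$, each of which is cut out by the condition that $\pi(S)$ contain a fixed low-degree curve or be singular along a fixed subscheme.

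The remaining exceptional families I would handle by the explicit Griffiths--Harris estimates~\cite{grihar} in the refined form of Lopez~\cite{lopez} and Brevik--Nollet~\cite{brenol}: for each candidate component one computes the codimension of the locus of $S\in\Lambda(d)$ acquiring the relevant extra class and checks that it exceeds $1$ unless the extra class is $[\mathbf{L}_x]$ and $A=\Sigma_j$. This is where $d\ge5$ enters, in parallel with the classical fact that the Noether--Lefschetz locus of $|\cO_{\PP^3}(d)|$ has no codimension-$1$ component for $d\ge5$ (the ``contains a line'' locus has codimension $d-3\ge2$); hypothesis~(4), non-planarity of the $C_j$, removes the only other potential codimension-$1$ component, namely the one produced when some $C_j$ lies in a plane. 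This proves assertion~(1). For assertion~(2) I would run a relative, infinitesimal Noether--Lefschetz argument over $\Sigma_j$: by~\Ref{hyp}{dieci} the very general $S\in\Sigma_j$ is smooth and contains the single line $\mathbf{L}_x=\pi^{-1}(x)$ over the unique ODP $x\in C_j$ of $\pi(S)$, and computing the image of the infinitesimal period map on $H^{2,0}$ (using the positivity of~(1) and the regularity of~(3)) forces the monodromy-invariant Hodge classes to be spanned by the restrictions from $W$ together with $[\mathbf{L}_x]$, which is exactly what is claimed.

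The main obstacle is the third step: matching the genuinely exceptional families $A$ produced by Joshi's criterion with an explicit list, and proving for each that the associated Noether--Lefschetz codimension is $\ge2$ (the sole exception being $\Sigma_j$). The difficulty is that the base locus $C$ and the blow-up $W$ mean the Griffiths--Harris--Lopez bounds must be re-derived on $W$ rather than on $\PP^3$, keeping careful track of the contributions of the exceptional divisors $E_j$ to the relevant cohomology and of how the thresholds shift with $d$, so that the cutoff lands exactly at $d=5$.
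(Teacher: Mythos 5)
Your two ingredients (Joshi's infinitesimal criterion and the Griffiths--Harris/Lopez/Brevik--Nollet degeneration) are indeed the ones the paper uses, but your proposal is missing the idea that makes them fit together, and the substitute you offer for it would fail. The missing idea is projective duality: since $\pi^{*}\cO_{\PP^3}(d)(-E)$ is very ample, $W$ embeds in $\Lambda(d)^{\vee}$, and the paper splits the integral codimension-$1$ subsets $A\subset\Lambda(d)$ according to whether $A^{\vee}\subset W$ or not, i.e.\ according to whether the generic projective tangent hyperplane ${\bf T}_S A$ has a base point or is base-point free. The base-point free case is exactly the scope of Joshi's criterion (\Ref{prp}{pazienza}). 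Contrary to your description, what is left over is \emph{not} ``finitely many exceptional families cut out by containing a fixed low-degree curve or being singular along a fixed subscheme'': it is the continuous family of all $A$ with $A^{\vee}\subset W$ --- for instance the hyperplane of surfaces through a fixed point of $W$, the locus of surfaces tangent to a fixed curve in $W$, the discriminant $\Delta(d)$ (whose dual is $W$), and the $\Sigma_j(d)$ (whose duals are the $E_j$). There is no finite list, so ``for each candidate component compute the codimension'' cannot even be set up; you yourself flag this as the main obstacle, and duality is how the paper removes it. By biduality, if $A^{\vee}\subset W$ then $A$ is swept out by the linear systems $\Lambda_{p,T_pA^{\vee}}(d)$ for $p\in(A^{\vee})^{sm}$, so everything reduces to a Noether--Lefschetz statement for the \emph{very general member of the explicit systems} $\Gamma_{p,F}(d)$ and $\Pi_j(d)$; that statement is \Ref{prp}{dirtan}, proved by degeneration, and one returns to $A$ itself by a countability argument (plus a simultaneous resolution for $\Delta(d)$ and $\Sigma_j(d)$). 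Note also that your reduction to N\'eron--Severi groups of smooth surfaces says nothing about $A=\Delta(d)$, which Item (1) of \Ref{hyp}{ipnole} must cover and which the duality argument handles.

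Two further misattributions would derail the details. First, Griffiths--Harris, Lopez and Brevik--Nollet do not produce codimension bounds on Noether--Lefschetz components (such bounds come from Green's infinitesimal method, which is specific to $\PP^3$ and is nowhere re-derived on $W$ in the paper); they prove that very general members of explicit linear systems have the expected Picard group, and that is how the paper uses them. Consistently, $d\ge 5$ enters in the \emph{Joshi} step, not in any codimension count: the proof of \Ref{prp}{pazienza} needs $H^1(\PP^3,\ov{M}(U)(d-4))=0$, which follows from $2$-regularity of $\ov{M}(U)$ (via hypothesis (3)) only when $d-4\ge 1$, whereas \Ref{prp}{dirtan} only requires $d\ge 4$. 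Likewise, hypothesis (4) is not there to kill a codimension-$1$ component created by a planar $C_j$ (a planar $C_j$ creates no new class on the generic $X$, since the residual curve of $C_j$ in its plane has class $c_1(\cO_X(1))-[C_j]$); non-planarity of the $C_j$, and of the residual curve $C_0$ in $X\cap Y=C_0\cup C_1\cup\ldots\cup C_n$, is what makes the monodromy and non-torsion arguments on the degenerate fiber work in the proof of \Ref{prp}{dirtan}. Second, your plan for assertion (2) --- an infinitesimal Noether--Lefschetz argument along $\Sigma_j(d)$ --- is structurally blocked: since $\Sigma_j(d)^{\vee}=E_j$, every tangent hyperplane to $\Sigma_j(d)$ is a linear system with a base point on $E_j$, so the generation hypothesis required by Joshi-type infinitesimal criteria fails identically along $\Sigma_j(d)$. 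This is precisely why the paper treats $\Sigma_j(d)$ by the degeneration argument as well, namely \Ref{prp}{dirtan}(b) applied to $\Pi_j(d)$, followed by the countability/simultaneous-resolution step.
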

Recall that~\Ref{hyp}{ipnole} states that~\Ref{hyp}{dieci} holds, and that Items~(1) and~(2) (our Noether-Lefschetz hypotheses) of~\Ref{hyp}{ipnole} hold.  The proof that~\Ref{hyp}{dieci} holds is elementary, and will be given in~\Ref{subsec}{vitadapecora}. 
 We will prove that  Items~(1) and~(2) of~\Ref{hyp}{ipnole} hold by 
applying Joshi's main criterion (Prop.~3.1 of~\cite{joshi}), and the main idea in Griffiths-Harris' proof of the classical Noether-Lefschetz  Theorem~\cite{grihar}, as further developed by Lopez~\cite{lopez} and Brevik-Nollet~\cite{brenol}. The proof will be outlined in~\Ref{subsec}{shaun},  details are  in the remaining subsections. 
\begin{rmk}
Choose disjoint integral smooth  curves $C_1,\ldots,C_n\subset\PP^3$ such that for each $j\in\{1,\ldots,n\}$ there does \emph{not} exist  $r\in\QQ$ such that $c_1(K_{C_j})=r c_1(\cO_{C_j}(1))$. Let   $d\gg 0$. Then the hypotheses of~\Ref{thm}{peppapig} are satisfied, and hence by~\Ref{prp}{vaccini} the following holds: 
 if $X\in|\cI_C(d)|$ is  very general, then the $0$-cycle classes $c_1(\cO_X(1))^2,  c_1(\cO_X(C_1))^2,\ldots, c_1(\cO_X(C_n))^2$ are linearly independent. Thus  the group of decomposable $0$-cycles of $X$ has rank at least $n+1$.  The proof of~\Ref{thm}{zorba} is achieved by making the above argument effective, see~\Ref{sec}{chiudo}.
\end{rmk}
\subsection{Dimension counts}\label{subsec:vitadapecora}
We will prove that, if the hypotheses  of~\Ref{thm}{peppapig} are satisfied, then~\Ref{hyp}{dieci} holds for $H\in|\cO_{\PP^3}(d)|$. First, $H$ is ample on $\PP^3$, and $\pi^{*}(H)-E$ is very ample on $W$ because it is the tensor product of the line-bundle  $\pi^{*}\cO_{\PP^3}(d-3)(-E)$, which is very ample by hypothesis, and the base-point free line-bundle  
$\pi^{*}\cO_{\PP^3}(3)$. 
Let  $\Delta(r)\subset\Lambda(r)$ be  the closed subset parametrizing singular surfaces.
\begin{prp}\label{prp:singgener}
Suppose that 
$\pi^{*}\cO_{\PP^3}(r-1)(-E)$ is very ample. Then the following hold:
\begin{enumerate}
\item
Let   $x\in C$. The linear system $|\cI_x^2(r)|\cap |\cI_C(r)|$ has base locus equal to $C$, and codimension $2$ in $|\cI_C(r)|$. If  $X$ is generic in  $|\cI_x^2(r)|\cap |\cI_C(r)|$ then it has an ODP at $x$ and no other singularity.
\item
Given $x\in W\setminus E$ there exists $S\in\Delta(r)$ which has an ODP at $x$ and is smooth away from $x$.
\item
The closed subset $\Delta(r)$  is  irreducible of  codimension $1$ in $\Lambda(r)$, and the generic $S\in\Delta(r)$ has a unique singular point, which is an ODP. 
\item
Let $j\in\{1,\ldots,n\}$. If $S$ is a generic element of  $\Sigma_j(r)$, then $\pi(S)$  has a unique singular point $x$, which is an ODP (notice that $S$ is smooth). 
\end{enumerate}
\end{prp}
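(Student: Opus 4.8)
The plan is to reduce all four assertions to jet computations enabled by the very ampleness of $A:=\pi^{*}\cO_{\PP^3}(r-1)(-E)$, used through the factorization $L:=\pi^{*}\cO_{\PP^3}(r)(-E)=A\otimes\pi^{*}\cO_{\PP^3}(1)$; note that $L$ is itself very ample, being the tensor product of the very ample $A$ and the base-point-free $\pi^{*}\cO_{\PP^3}(1)$. I would first set down the blow-up dictionary. Since $\pi$ is an isomorphism over $\PP^3\setminus C$, a surface $S\in\Lambda(r)$ with $\pi(S)$ singular at a point off $C$ is genuinely singular at the corresponding point of $W$; by contrast $\pi(S)$ is singular at $x\in C_j$ precisely when the strict transform $S$ contains the fibre $\mathbf{L}_x=\pi^{-1}(x)$ of $\rho_j\colon E_j\to C_j$ (in local coordinates $C_j=\{v=w=0\}$ one has $f=vg+wh$ with $g,h\in\mathfrak{m}_x$, and the strict transform visibly vanishes along $\mathbf{L}_x$). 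Because $\cO_W(-E)|_{\mathbf{L}_x}=\cO_{\PP^1}(1)$ and $\pi^{*}\cO_{\PP^3}(r)|_{\mathbf{L}_x}$ is trivial, we get $L|_{\mathbf{L}_x}=\cO_{\PP^1}(1)$, and $H^0(L)\to H^0(L|_{\mathbf{L}_x})$ is surjective onto a two-dimensional space (since $A$ embeds $\mathbf{L}_x$ as a line and $\pi^{*}\cO_{\PP^3}(1)$ is trivial along $\mathbf{L}_x$); together with a separation argument from the very ampleness of $A$ this gives the codimension-$2$ count and the base-locus statement of Item~(1).

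For Items~(2) and~(3) I would use the incidence variety $\mathcal{D}:=\{(S,x)\in\Lambda(r)\times W\mid x\in\sing S\}$. Singularity of $S$ at a fixed $x$ means the defining section lies in $\mathfrak{m}_x^2L_x$, and the very ampleness of $L$ makes $H^0(L)\to L_x\otimes\cO_W/\mathfrak{m}_x^2$ surjective, so every fibre $\mathcal{D}_x$ is a linear subspace of codimension $4$; hence $\mathcal{D}$ is an irreducible projective bundle over $W$ of dimension $\dim\Lambda(r)-1$, its generic point lying over $W\setminus E$, and $\Delta(r)=p_1(\mathcal{D})$ is irreducible of codimension $1$. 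Generic finiteness of $p_1$ (hence the uniqueness of the singular point) follows because imposing singularities at two distinct points costs $8$ conditions while moving the pair of points in $W$ restores only $6$, so the locus of surfaces with two singular points has codimension at least $2$. To see that the generic singular point is an ordinary double point, and to produce the surface of Item~(2), I would show the Hessian map $\{s\in H^0(L):s\in\mathfrak{m}_x^2L_x\}\to\Sym^2(\mathfrak{m}_x/\mathfrak{m}_x^2)\otimes L_x$ is surjective for $x\in W\setminus E$: writing sections as products $a\cdot b$ with $a\in H^0(A)$ and $b\in H^0(\pi^{*}\cO_{\PP^3}(1))$ both vanishing at $x$, the Hessian of $ab$ is $da(x)\cdot db(x)$, and since the differentials coming from $A$ and from $\cO_{\PP^3}(1)$ each span $T_x^{*}W$, linear combinations realize every quadratic form, in particular the non-degenerate ones. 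A Bertini argument away from the base locus then yields smoothness of $\pi(S)$ off $x$.

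The points of $C$, relevant to Items~(1) and~(4), are the delicate case, and I expect the genericity of the ordinary double point there to be the main obstacle. Starting from the local form $f=vg+wh$, the vanishing of $f$ along $C_j$ forces the tangent cone $\{f_2=0\}$ to pass through the point $[T_xC_j]\in\PP(T_x\PP^3)$, and the condition that $\pi(S)$ have an ordinary double point at $x$ is exactly that this conic be smooth. The task is to prove that, as $S$ ranges over the subsystem $\{S\supset\mathbf{L}_x\}$, the associated conic is generically smooth; I would realize the required quadratic forms by products of a section of $A$ (whose very ampleness governs the normal directions along $\mathbf{L}_x\subset E_j=\PP(\cN_{C_j/\PP^3})$) with a plane through $x$, checking that passage through $[T_xC_j]$ is the only forced constraint and that smooth conics through a single point exist.

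Finally, for Item~(4) I would run the incidence variety $\mathcal{E}_j:=\{(S,x)\mid x\in C_j,\ S\supset\mathbf{L}_x\}$, fibred over the curve $C_j$ with codimension-$2$ fibres by the count of the first paragraph, so that $\dim\mathcal{E}_j=\dim\Lambda(r)-1$ and $\cod(\Sigma_j(r),\Lambda(r))=1$ with the generic member containing a single fibre $\mathbf{L}_x$; the preceding conic analysis then shows $\pi(S)$ has an ordinary double point at $x$ and, combined with the dimension counts of the second paragraph, no further singularity. Smoothness of $S$ itself, along $\mathbf{L}_x$ and elsewhere, follows from Bertini on $W$ once one observes that blowing up $C_j$ resolves the double point of $\pi(S)$, so that the strict transform stays smooth.
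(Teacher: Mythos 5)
Your route is genuinely different from the paper's: you work with the incidence variety $\mathcal{D}=\{(S,x):x\in\sing S\}$ and jet-surjectivity statements for the full linear system, whereas the paper fixes two auxiliary members $F,G\in H^0(\PP^3,\cI_C(r-1))$, smooth and transverse along $C$ outside finitely many points, and performs all of its counts inside the explicit five-dimensional system $\{V(AF+BG):[A,B]\in\PP(H^0(\cI_x(1))\oplus H^0(\cI_x(1)))\}$, finishing with Bertini. Your one-point computations are correct and in places cleaner than the paper's: the dictionary \lq\lq $\pi(S)$ is singular at $x\in C_j$ iff $S\supset\mathbf{L}_x$\rq\rq, the codimension-$2$ count via surjectivity of $H^0(L)\to H^0(L|_{\mathbf{L}_x})$, the Hessian surjectivity at $x\notin E$ via products $a\cdot\pi^{*}\ell$, and the identification of the possible tangent cones at $x\in C$ with the conics through $[T_xC_j]$ (this recovers the paper's remark that the generic such tangent cone is the generic conic through $\PP(T_xC_j)$, hence smooth).

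The genuine gap is in the two-point counts, which are exactly what carries the uniqueness assertions of Items (3) and (4). You assert that \lq\lq imposing singularities at two distinct points costs $8$ conditions\rq\rq, and in the last paragraph you invoke \lq\lq the dimension counts of the second paragraph\rq\rq\ to exclude further singularities of the generic member of $\Sigma_j(r)$. But very ampleness of $L$ only gives surjectivity of $H^0(L)\to L\otimes\cO_W/\mathfrak{m}_x^2$ at a single point; it does \emph{not} give surjectivity of $H^0(L)\to L\otimes(\cO_W/\mathfrak{m}_x^2\oplus\cO_W/\mathfrak{m}_y^2)$. In the embedding defined by $|L|$ the latter says that the embedded tangent spaces at $x$ and $y$ span a $\PP^7$; this fails whenever the tangent spaces meet (for a smooth $3$-fold in $\PP^4$, a very ample situation, it fails for \emph{every} pair, even though the conclusion about unique singular points still holds there for other reasons). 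So the $8$-conditions claim must be proved, not asserted: either (i) run your own product trick at pairs --- sections $a\cdot\pi^{*}\ell$ with $\ell$ separating $\pi(x)$ from $\pi(y)$ and $a$ vanishing at $y$ realize independent jets for \emph{generic} pairs --- and then stratify and re-count over the bad pairs (pairs on one fiber of $E$, pairs with $\pi(x)=\pi(y)$, pairs with $y$ in the embedded tangent space at $x$); or (ii) avoid pairs altogether: your Item (2) produces, for each $x\in W\setminus E$, a surface with an ODP at $x$ and no other singularity, and since $\mathcal{D}$ is irreducible and the condition \lq\lq unique singular point, which is an ODP\rq\rq\ is open on $\mathcal{D}$ (by the local stability of ordinary double points: in the versal $A_1$-family the relative singular locus is finite of degree one over the discriminant), the image of this open set contains a dense constructible subset of $\Delta(r)$, which gives Item (3) with no count at pairs; the same scheme, with Item (1) as input, handles Item (4). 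The paper sidesteps this issue entirely because inside its explicit system the conditions imposed by a second singular point $q\in C$ are visibly the linear equations $A(q)=B(q)=0$ (one equation at the finitely many non-transversality points), so independence is checkable by hand.
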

\begin{proof}
Let $q\in \PP^3\setminus C$. Since $\pi^{*}\cO_{\PP^3}(r-1)(-E)$ is very ample there exists $X\in |\cI_C(r-1)|$ such that $q\notin X$. Let $P\subset\PP^3$ be a plane containing $x$ but not $q$: then $X+P\in |\cI_x^2(r)|\cap |\cI_C(r)|$ does not pass through $q$, and this proves that  $|\cI_x^2(r)|\cap |\cI_C(r)|$ has base locus equal to $C$. 
Since $\pi^{*}\cO_{\PP^3}(r-1)(-E)$ is very ample there exist $F,G\in H^0(\PP^3,\cI_C(r-1))$ and $q_1,\ldots,q_m\in (C\setminus\{x\})$ such that $V(F),V(G)$ are smooth and  transverse at each point of $C\setminus\{q_1,\ldots,q_m\}$. 
Let $P\subset\PP^3$ be a plane not passing through $x$: the pencil in $|\cI_C(r)|$ spanned by $V(F)+P$ and $V(G)+P$ does not intersect 
  $|\cI_x^2(r)|\cap |\cI_C(r)|$, and hence  $|\cI_x^2(r)|\cap |\cI_C(r)|$ has codimension at least $2$ in $|\cI_C(r)|$. The codimension is equal to $2$ because  imposing on $X\in |\cI_C(r)|$ that it  be singular at $x\in C$ is equivalent to $2$ linear equations being satisfied.
In order to show that the singularities of a generic element of  $|\cI_x^2(r)|\cap |\cI_C(r)|$ are as claimed we consider the embedding 
\begin{equation}\label{humpty}
\begin{matrix}
\PP(H^0(\PP^3,\cI_x(1))\oplus H^0(\PP^3,\cI_x(1))) & \lra & \Sigma_j(r) \\
[A,B] & \mapsto & V(A F+ B G)
\end{matrix}
\end{equation}
where $F,G$ are as above. 
The image   is a sublinear system of $|\cI_x^2(r)|\cap |\cI_C(r)|$ whose base locus is $C$, hence the generic $V(A\cdot F+ B\cdot G)$   is smooth away from $C$ by Bertini's Theorem. 
 A local computation shows that the projectivized tangent cone of $V(AF+BG)$ at $x$ is a smooth conic for generic $A,B$. Lastly let $q\in C\setminus\{x\}$. The set of $[A,B]$ such that $V(AF+BG)$   is singular at $q$ has codimension $2$ if $q\notin\{q_1,\ldots,q_m\}$, codimension $1$ if $q\in\{q_1,\ldots,q_m\}$: it follows that for generic  $[A,B]$ the surface  $V(AF+BG)$  is  smooth at all points of $C\setminus\{x\}$. This proves Item~(1). 
The remaining items are proved similarly.
\end{proof}
\begin{rmk}\label{rmk:gencone}
Let $x\in C$. The proof of~\Ref{prp}{singgener} shows that the projectivized tangent cone  at $x$ of the generic $X\in |\cI_x^2(r)|\cap |\cI_C(r)|$ is the generic conic in $\PP(T_x\PP^3)$ containing the point $\PP(T_x C)$.
\end{rmk}
\begin{prp}\label{prp:aridaje}
Suppose that $\pi^{*}\cO_{\PP^3}(r)(-E)$ is very ample and that 
$\pi^{*}\cO_{\PP^3}(r-3)(-E)$ is base point free. Then the locus of non-integral surfaces $S\in|\Lambda(r)|$ has codimension at least $4$.
\end{prp}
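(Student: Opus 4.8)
The plan is to push every $S\in\Lambda(r)=|\pi^{*}\cO_{\PP^3}(r)-E|$ down to $\PP^3$ and separate the ways in which it can fail to be integral. Writing $\cL:=\pi^{*}\cO_{\PP^3}(r)-E$, a section $s_f\in H^0(W,\cL)$ corresponds to $f\in H^0(\cI_C(r))$, and its divisor is $\divisore(s_f)=\wt X+\sum_{j=1}^n(\mult_{C_j}X-1)E_j$, where $X=V(f)$ and $\wt X$ is its strict transform (the only components of $S$ are the horizontal one $\wt X$ and the exceptional $E_j$). Hence $S$ is non-integral if and only if either (a) $X$ is reducible or non-reduced as a surface in $\PP^3$, or (b) $X$ is integral but $\mult_{C_j}X\ge 2$ for some $j$, so that $S$ acquires the exceptional component $E_j$. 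I would bound the codimension of each locus separately; since the non-integral locus is a finite union of these, it suffices to bound each by $4$.

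Case (b) is the clean one. The locus where $\mult_{C_j}X\ge 2$ is exactly $\PP(\ker\tau_j)$, where $\tau_j\colon H^0(W,\cL)\to H^0(E_j,\cL|_{E_j})$ is the restriction map; its codimension in $\Lambda(r)$ equals $\dim\im(\tau_j)$. Because $\cL$ is very ample on $W$, it separates points and tangent vectors of $W$, hence of the surface $E_j\subset W$; thus $\im(\tau_j)$ is a base-point-free subsystem separating points and tangents of $E_j$, and it embeds $E_j$ non-degenerately into $\PP(\im(\tau_j)^{\vee})$. As $E_j$ is a $\PP^1$-bundle over $C_j$, it has Picard rank $\ge 2$ and in particular is not isomorphic to $\PP^2$; a non-degenerate embedded surface other than $\PP^2\subset\PP^2$ needs ambient dimension $\ge 3$, forcing $\dim\im(\tau_j)\ge 4$, i.e. $\cod\ge 4$.

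For case (a), fix a degree split $r=a+(r-a)$ with $1\le a\le r-1$ and a partition $\{1,\dots,n\}=J\sqcup J^{c}$ recording which components $C_i$ lie on each factor. The associated locus of reducible or non-reduced $f=gh$ with this data is the image of the multiplication map $\PP H^0(\cI_{C_J}(a))\times\PP H^0(\cI_{C_{J^c}}(r-a))\to\Lambda(r)$, which by unique factorization is generically injective, so its dimension is at most $h^0(\cI_{C_J}(a))+h^0(\cI_{C_{J^c}}(r-a))-2$ (the non-reduced sublocus, where the two factors share a component, is even smaller). Thus the whole of case (a) has codimension $\ge 4$ once one proves the uniform gap inequality
\begin{equation*}
h^0(\cI_{C_J}(a))+h^0(\cI_{C_{J^c}}(r-a))\le h^0(\cI_C(r))-3
\end{equation*}
for every such $a$ and $J$, there being only finitely many of each.

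This last inequality is the crux, and I expect it to be the main obstacle. I would rewrite each term as $h^0(\cI_{Z}(k))=\binom{k+3}{3}-h^0(\cO_Z(k))$, valid once $h^1(\cI_Z(k))=0$, using that the hypotheses — very ampleness of $\pi^{*}\cO_{\PP^3}(r)-E$ and base-point-freeness of $\pi^{*}\cO_{\PP^3}(r-3)-E$ — force the regularity needed so that $h^0(\cO_{C'}(k))=\chi(\cO_{C'}(k))$ for the high-degree factor and for $C$ itself, together with $h^0(\cO_C(k))=\sum_j h^0(\cO_{C_j}(k))$ by disjointness. Keeping only the crude bound $h^0(\cI_{C_J}(a))\le\binom{a+3}{3}$ on the low-degree factor but the exact values on the others, the inequality reduces to the binomial superadditivity $\binom{r+3}{3}-\binom{a+3}{3}-\binom{r-a+3}{3}$ beating a correction that is only linear in $r$ (essentially $a\deg C_{J^c}+h^0(\cO_{C_J}(r))$). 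The delicate point is to make this effective uniformly in $a$ and $J$: the binomial term is smallest at the endpoints $a=1$ or $a=r-1$, and the correction is largest when $C$ is concentrated on the low-degree factor, so one must check that very ampleness already forces $r$ large enough relative to $\deg C$ and the genera $g_j$ for the quadratic binomial term to dominate the $O(r\deg C)$ correction; the endpoint cases (a plane or quadric factor, and coplanar sub-collections of the $C_j$) will require the most care.
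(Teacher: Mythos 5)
Your case (b) is the same as the paper's treatment of the components whose generic member contains an exceptional divisor $E_k$: in both arguments the codimension equals the dimension of the image of the restriction map $H^0(W,\pi^{*}\cO_{\PP^3}(r)(-E))\to H^0(E_k,\pi^{*}\cO_{\PP^3}(r)(-E)|_{E_k})$, which is at least $4$ because $E_k$ is embedded by a subsystem of a very ample system and, being a $\PP^1$-bundle over a curve, is not $\PP^2$. The genuine gap is in your case (a). The reduction to the inequality
\begin{equation*}
h^0(\cI_{C_J}(a))+h^0(\cI_{C_{J^c}}(r-a))\le h^0(\cI_C(r))-3
\end{equation*}
is correct (the multiplication map has finite fibers by unique factorization), but this inequality is the entire content of the case, you leave it unproven, and the route you sketch toward it is not available under the stated hypotheses. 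You assert that very ampleness of $\pi^{*}\cO_{\PP^3}(r)(-E)$ and base-point freeness of $\pi^{*}\cO_{\PP^3}(r-3)(-E)$ \lq\lq force the regularity needed\rq\rq\ to compute $h^0(\cI_Z(k))=\binom{k+3}{3}-h^0(\cO_Z(k))$, i.e.\ to get $h^1(\cI_Z(k))=0$. This does not follow: very ampleness concerns separation of points and tangent vectors by the sections one \emph{has}, whereas $h^1(\cI_C(r))=0$ is surjectivity of $H^0(\cO_{\PP^3}(r))\to H^0(\cO_C(r))$, a completeness statement about sections one may \emph{not} have. Indeed the paper imposes $(d-2)$-regularity of $\cI_C$ as a \emph{separate} hypothesis in \Ref{thm}{peppapig} and \Ref{prp}{pazienza} precisely because it is not implied by the very ampleness assumptions, and \Ref{prp}{aridaje} does not have it. Worse, your count also needs cohomological control of $\cI_{C_J}(a)$ and $\cI_{C_{J^c}}(r-a)$ for every splitting $a$ and every sub-collection $J$, about which the hypotheses say nothing; this is exactly the uniformity problem you flag at the endpoints and do not resolve.

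The missing idea is that one can avoid all $h^0$ computations. The paper argues geometrically: if the generic $S$ in a component contains no $E_k$, then $X=\pi(S)$ is a non-integral surface in $\PP^3$; writing $X=X_1+X_2$, the curve $X_1\cap X_2$ has dimension $\ge 1$ and is not contained in $C$ (otherwise it would contain some $C_k$, giving $\mult_{C_k}X\ge 2$ and hence $E_k\subset S$, a contradiction). Every point of $X_1\cap X_2$ is a singular point of $X$ with degenerate quadratic terms, so $X$ lies in $\Omega_{p,q}(r)$, the locus of members of $|\cI_C(r)|$ singular at $p$ and $q$ with degenerate quadratic terms, for some (indeed infinitely many) pairs of distinct points $p,q\in\PP^3\setminus C$. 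For fixed $(p,q)$, choosing $Y\in|\cI_C(r-3)|$ through neither $p$ nor $q$ --- this is where base-point freeness of $\pi^{*}\cO_{\PP^3}(r-3)(-E)$ enters --- and intersecting $\Omega_{p,q}(r)$ with the linear system $\{Y+Z : Z\in|\cO_{\PP^3}(3)|\}$ reduces the codimension bound to an explicit computation for cubics: $5$ conditions at each point, so $\cod(\Omega_{p,q}(r),|\cI_C(r)|)\ge 10$. Since the pairs $(p,q)$ move in a $6$-dimensional family, the union has codimension at least $10-6=4$. Note that your inequality is then a \emph{consequence} of the proposition rather than a step toward it; to complete your write-up you would have to import this argument (or something equally frugal with the hypotheses) for case (a).
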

\begin{proof}
Let  $\Dec(r)\subset\Lambda(r)$ be the (closed) subset of  non-integral surfaces, and $\Dec(r)_1,\ldots,\Dec(r)_m$ be its irreducible components.   Let $j\in\{1,\ldots,m\}$; we will prove that   
 the locus of non-integral surfaces $S\in \Dec(r)_j$ has codimension at least $4$. Suppose first that the generic $S\in\Dec(r)_j$ contains one (at least) of the components  of $E$, say  $E_k$.
 Since $\pi^{*}\cO_{\PP^3}(r)(-E)$ is very ample, and $E_k$ is a $\PP^1$-bundle, the image of the restriction map 
 \begin{equation*}
 H^0(W,\pi^{*}\cO_{\PP^3}(r)(-E))\to H^0(E_k,\pi^{*}\cO_{\PP^3}(r)(-E)|_{E_k})
\end{equation*}
 has dimension at least $4$, and hence the locus  of $S\in 
 |\pi^{*}\cO_{\PP^3}(r)(-E)|$ which contain $E_k$ has codimension at least $4$. 
 
  Next, suppose that the generic $S\in\Dec(r)_j$ does not contain any of the  components  of $E$. Let  $\Dec(r)'_j\subset  |\cI_C(r)|$ be the image of  $\Dec(r)_j$ 
  under the natural isomorphism $\Lambda(r)\overset{\sim}{\to} |\cI_C(r)|$.
   Let  $X\in\Dec(r)'_j$  be generic; we claim that
 \begin{equation}\label{villasciarra}
 \dim(\sing X\setminus C)\ge 1. 
\end{equation}
 In fact  $X=\pi(S)$, where    $S\in\Dec(r)_j$  is generic, and since $S$ is non-integral we may write $S=S_1+S_2$ where $S_1,S_2$ are effective non-zero divisors on $W$ (we will identify effective divisors and pure codimension-$1$ subschemes of $W$ and $\PP^3$). Thus $X=X_1+X_2$, where $X_i:=\pi(S_i)$.  Since $X_1,X_2$ are effective  non-zero divisors   on $\PP^3$ (non-zero because neither $S_1$ nor $S_2$ contains a component of $E$), their intersection has dimension at least $1$. Now $X_1\cap X_2\subset\sing X$, hence in order to prove~\eqref{villasciarra} it suffices to show that  $X_1\cap X_2$ is not contained in $C$. Suppose that    $X_1\cap X_2$ is contained in $C$; then, since it has dimension at least $1$, there exists $k\in\{1,\ldots,n\}$ such that
    $X_1\cap X_2$  contains  $C_k$, and this implies that $S$ contains $E_k$, contradicting our assumption. We have proved~\eqref{villasciarra}. 
 
 Next, let  
 $p\not=q\in (\PP^3\setminus C)$, and let  $\Omega_{p,q}(r)\subset |\cI_C(r)|$ be the subset of divisors 
 $X$ which are singular at $p,q$, with degenerate quadratic terms.  
If $X\in\Dec(r)'_j$, then by~\eqref{villasciarra} there exists a couple of distinct 
 $p,q\in (X\setminus C)$ such that $X$ is singular at $p$ and $q$, with  degenerate quadratic terms (in fact the set of such couples is infinite).  Thus, if Item~(2) holds, then
 \begin{equation}
\Dec(r)'_j\subset \bigcup_{p\not=q\in (\PP^3\setminus C)}\Omega_{p,q}(r).
\end{equation}
Hence it suffices to prove that the   codimension of $\Omega_{p,q}$ in $ |\cI_C(r)|$ is  $10$ (as expected) for each $p\not=q\in (\PP^3\setminus C)$. Let $Y\in |\cI_C(r-3)|$ be a surface not containing $p$ nor $q$ (it exists because  
 $\pi^{*}\cO_{\PP^3}(r-3)(-E)$ is base point free), and consider the subset
 \begin{equation*}
P_Y:=\{Y+Z\mid  Z\in |\cO_{\PP^3}(3)|\}.
\end{equation*}
An explicit computation shows that the codimension of the set of $Z\in |\cO_{\PP^3}(3)|$  singular at $p,q$, with degenerate quadratic terms, has codimension $10$: it follows that 
$\Omega_{p,q}(r)\cap P_Y$ has codimension $10$, and hence $\Omega_{p,q}(r)$ has codimension $10$ in $|\cI_C(r)|$.
\end{proof}
\Ref{prp}{singgener} and~\Ref{prp}{aridaje} prove that, if the hypotheses  of~\Ref{thm}{peppapig} are satisfied, then~\Ref{hyp}{dieci} holds for $H\in|\cO_{\PP^3}(d)|$. 
\subsection{Outline of the proof  that the Noether-Lefschetz hypothesis holds}\label{subsec:shaun}
 Let $A$ be  an integral closed codimension-$1$ subset  of 
$\Lambda(d)$. Let  $A^{\vee}\subset\Lambda(d)^{\vee}$ be the projective dual of $A$, i.e.~the closure of the locus of projective tangent hyperplanes ${\bf T}_S A$ for $S$ a point in the smooth locus $A^{sm}$ of $A$.  Since $\pi^{*}\cO_{\PP^3}(d)(-E)$ is very ample we have the natural  embedding  $W\hra \Lambda(d)^{\vee}$, and hence it makes sense  to distinguish between the following two cases:
\begin{enumerate}
\item[(I)]
$A^{\vee}$ is not contained in $W$.
\item[(II)]
$A^{\vee}$ is  contained in $W$. 
\end{enumerate}
Thus~(I) holds if and only if, for the generic $S\in A^{sm}$,  the projective tangent hyperplane ${\bf T}_S A$ is a base point free linear subsystem of $\Lambda(d)$. On the other hand, examples of codimension-$1$ subsets of $\Lambda(d)$ for which (II) holds are given by  $\Delta(d)$ and by $\Sigma_j(d)$ for $j\in\{1,\ldots,n\}$. In fact
$\Delta(d)^{\vee}=W$ and $\Sigma_j(d)^{\vee}=E_j$. 
The last equality holds because   $S\in\Lambda(d)$ belongs to  $\Sigma_j(d)$ if and only if it is tangent to $E_j$, thus  $\Sigma_j(d)=E_j^{\vee}$, and hence  $\Sigma_j(d)^{\vee}=E_j$ 
 by projective duality.
Let $\NL(\Lambda(d)\setminus\Delta(d))$ be the  \emph{Noether-Lefschetz locus}, i.e.~the set of those smooth surfaces $S\in\Lambda(d)$ such that the restriction map $\Pic(W)_{\QQ}\to \Pic(S)_{\QQ}$ is \emph{not} surjective. As is well-known   $\NL(\Lambda(d)\setminus\Delta(d))$  is a countable union of closed subsets of $\Lambda(d)\setminus\Delta(d)$.  
In~\Ref{subsec}{primocaso}  we will apply Joshi's criterion (Proposition~3.1 of~\cite{joshi}) in  order to prove the following result. 
\begin{prp}\label{prp:pazienza}
Suppose that $d\ge 5$ and that the following hold:
\begin{enumerate}
\item
$\pi^{*}\cO_{\PP^3}(d)(-E)$ is ample. 
\item
$H^1(C,T_{C}(d-4))=0$.
\item
The sheaf $\cI_C$ (on $\PP^3$) is $(d-2)$-regular.
\end{enumerate}
Let $A\subset\Lambda(d)$ be an integral closed subset of codimension $1$, and suppose that there exists  $S\in (A\setminus\Delta(d))$ such that $A$ is smooth at $S$,  and the projective tangent space ${\bf T}_S A$ is a base-point free linear subsystem of $\Lambda(d)$. Then $A\setminus\Delta(d)$ does not belong to the Noether-Lefschetz locus $NL(\Lambda(d)\setminus\Delta(d))$.  
\end{prp}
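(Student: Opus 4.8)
The plan is to deduce the statement from Joshi's criterion (Proposition~3.1 of~\cite{joshi}), applied to the pair $(W,L)$ with $L:=\pi^{*}\cO_{\PP^3}(d)(-E)$, which is very ample by hypothesis~(1), and to the universal family of smooth surfaces in $\Lambda(d)=|L|$. Joshi's criterion is an infinitesimal Noether-Lefschetz statement: given an integral codimension-$1$ subset $A\subset|L|$ with a point $S\in(A\setminus\Delta(d))$ at which $A$ is smooth and ${\bf T}_S A$ is a base-point-free linear subsystem, it asserts that the very general member of $A$ has $\Pic$ generated by restrictions from $W$, provided certain cohomological surjectivity and vanishing conditions for $(W,L)$ are met. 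Thus the content of the proof is to verify those conditions from $d\ge 5$, (2) and~(3). First I would record the Hodge-theoretic data of a smooth $S\in\Lambda(d)$: by adjunction $K_S=(K_W+L)|_S$, and since $K_W=\pi^{*}\cO_{\PP^3}(-4)+E$ one gets $K_S=\pi^{*}\cO_{\PP^3}(d-4)|_S$; the requirement $d\ge 5$ guarantees $H^{2,0}(S)=H^0(S,K_S)\neq 0$, so that a nontrivial variation of Hodge structure is present.

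Next I would argue by contradiction, in the Griffiths--Harris style. Suppose $A\setminus\Delta(d)$ lay in the Noether-Lefschetz locus $\NL(\Lambda(d)\setminus\Delta(d))$; then the very general $S\in A$ would carry a divisor class outside the image of $\Pic(W)_{\QQ}\to\Pic(S)_{\QQ}$, equivalently (by the Lefschetz $(1,1)$ theorem) a Hodge class $\lambda\in H^{1,1}(S)\cap H^2(S;\QQ)$ not restricted from $W$. The requirement that $\lambda$ stay of type $(1,1)$ to first order along $A$ forces $\bar\nabla_v\lambda=0$ in $H^{0,2}(S)$ for every $v\in T_S A$, where $T_S A\subset T_S\Lambda(d)=H^0(S,\cN_{S/W})=H^0(S,L|_S)$ is the codimension-$1$ subspace corresponding to ${\bf T}_S A$. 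Joshi's criterion packages precisely this annihilation statement: the base-point-freeness of the hyperplane ${\bf T}_S A$, combined with the cohomological hypotheses on $(W,L)$, forbids a nonzero such $\lambda$ from being annihilated by a base-point-free hyperplane, and hence forces $\lambda$ to be ambient --- the desired contradiction. The role of hypotheses~(2) and~(3) is to feed Joshi's machine the surjectivity of the multiplication maps underlying the symmetrizer-lemma step of this argument.

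The heart of the work, and the main obstacle, is verifying Joshi's cohomological input in the presence of the base locus $C$ and the exceptional divisor $E$. On $\PP^3$ without base locus these inputs are the classical surjectivity statements for the Jacobian ring; here they become statements twisted by $E$ on $W$, or, after pushing forward by $\pi$, statements about $\cI_C$ on $\PP^3$. I expect $(d-2)$-regularity of $\cI_C$ (hypothesis~(3)) to yield the needed generation and surjectivity of the relevant multiplication maps (the analogues, twisted by $\cI_C$, of the surjections used by Green), while the vanishing $H^1(C,T_C(d-4))=0$ (hypothesis~(2)) controls the component of the infinitesimal data supported along $E$, that is, the contribution of the normal directions to $C$ inside $W$. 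The delicate point will be the degree bookkeeping on $W$: one must track the twists by $E$ through the conormal and adjunction sequences and confirm that the Castelnuovo--Mumford bound $(d-2)$ and the vanishing in degree $d-4$ are exactly what make every map Joshi requires surjective and every obstruction group vanish, so that his criterion applies verbatim to $(W,L)$.
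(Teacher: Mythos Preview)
Your strategy is the paper's strategy: reduce to Joshi's criterion for the pair $(W,L)$ with $L=\pi^{*}\cO_{\PP^3}(d)(-E)$. But what you have written is a plan, not a proof. You never state Joshi's actual hypotheses for $(U,\sigma)$, and you do not verify them; the phrases ``I expect $(d-2)$-regularity \ldots\ to yield'' and ``the delicate point will be the degree bookkeeping'' are exactly where the content lies. Joshi's criterion (as recalled in the paper's Proposition on the INL theorem) requires two concrete vanishings:
\[
\text{(a)}\quad H^1\!\bigl(W,\Omega^2_W\otimes L\bigr)=0,\qquad
\text{(b)}\quad H^1\!\bigl(W,M(U)\otimes K_W\otimes L\bigr)=0,
\]
where $M(U)$ is the syzygy bundle of the base-point-free subspace $U\subset H^0(W,L)$ with $\PP(U)={\bf T}_S A$. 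Your paragraph about $\bar\nabla$ and ``base-point-free hyperplane'' is a narrative restatement of what Joshi's criterion encodes, not a substitute for checking (a) and (b).

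For (a), the paper rewrites $\Omega^2_W\otimes L\cong T_W\otimes\pi^{*}\cO_{\PP^3}(d-4)$ and compares $T_W$ with $\pi^{*}T_{\PP^3}$ via the differential of $\pi$; the cokernel lives on $E$ and, after push-forward, the required surjectivity becomes the surjection $H^0(T_{\PP^3}(d-4))\twoheadrightarrow H^0(\cN_{C/\PP^3}(d-4))$, which needs \emph{both} $H^1(\cI_C\otimes T_{\PP^3}(d-4))=0$ (this is where $(d-2)$-regularity of $\cI_C$, combined with $(-1)$-regularity of $T_{\PP^3}$, is used) and $H^1(C,T_C(d-4))=0$ (hypothesis~(2)). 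So hypothesis~(2) feeds into (a), not merely into ``normal directions'' in some vague sense. For (b), $K_W\otimes L\cong\pi^{*}\cO_{\PP^3}(d-4)$; tensoring the defining sequence of $M(U)$ and pushing to $\PP^3$ reduces to surjectivity of $U\otimes H^0(\cO_{\PP^3}(d-4))\to H^0(\cI_C(2d-4))$, i.e.\ to $H^1(\PP^3,\ov{M}(U)(d-4))=0$. Here $\ov{M}(U)$ is the syzygy sheaf of $U\subset H^0(\cI_C(d))$; since $\cI_C$ is $(d-2)$-regular (in fact $d$-regular suffices here) and $U$ has codimension~$1$, one gets that $\ov{M}(U)$ is $2$-regular, and $d\ge 5$ gives the vanishing. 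This last step also uses ampleness of $L$ (Kodaira) to kill $H^1(W,K_W\otimes L)$ --- note that hypothesis~(1) says \emph{ample}, not very ample as you wrote, and ampleness is precisely what is used.

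In short: right tool, right intuition about which hypothesis plays which role, but you must actually name and prove (a) and (b) rather than announce that the bookkeeping will work out.
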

The above result deals with codimension-$1$ subsets $A\subset\Lambda(d)$ for which~(I) above holds.  
Thus, in order to finish the proof of~\Ref{thm}{peppapig}, it remains to deal with those $A$ such that~(II) above 
holds.  
\begin{dfn}\label{dfn:lamgam}
Given $p\in W$ and $F\subset T_p W$  a vector subspace, we let 
\begin{equation}\label{lampif}
\Lambda_{p,F}(d):=\{S\in |\cI_p\otimes \pi^{*}\cO_{\PP^3}(d)(-E)| : F\subset T_p S\}.
\end{equation}
Let $\Gamma(d):=|\cI_C(d)|$. We have a tautological  identification
$\Lambda(d)\overset{\sim}{\lra}\Gamma(d)$: we let $\Gamma_{p,F}(d)\subset\Gamma(d)$ be the image of $\Lambda_{p,F}(d)$, and for $j\in\{1,\ldots,n\}$ we let 
$\Pi_j(d)\subset\Gamma(d)$ be the image of $\Sigma_j(d)$. 
\end{dfn}
Notice that $\Lambda_{p,F}(d)$ and $\Gamma_{p,F}(d)$ are  linear subsystems of $\Lambda(d)$ and  $\Gamma(d)$ respectively. 
In~\Ref{subsec}{sgocciola} we will prove the  result below by applying  an idea of Griffiths-Harris~\cite{grihar} as further developed by Lopez~\cite{lopez} and Brevik-Nollet~\cite{brenol}. 
\begin{prp}\label{prp:dirtan}
 Suppose that the following hold:
\begin{enumerate}
\item
$d\ge 4$ and $\pi^{*}\cO_{\PP^3}(d-3)(-E)$ is very ample.
\item
None of the curves $C_1,\ldots,C_n$ is planar.
\end{enumerate}
Let $X$ be a very general element  
\begin{enumerate}
\item[(a)]
of $\Gamma_{p,F}(d)$, where either $p\notin E$, or else $p\in E$ and  
\begin{equation}\label{contenimento}
T_p(\pi^{-1}(\pi(p)))\not\subset  F\subsetneq T_pE,
\end{equation}
\item[(b)]
or of $\Pi_j(d)$ for some  $j\in\{1,\ldots,n\}$. 
\end{enumerate}
Then the Chow group  $\CH^1(X)_{\QQ}$ is generated by $c_1(\cO_X(1))$ and the classes of $C_1,\ldots,C_n$. 
\end{prp}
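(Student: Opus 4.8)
This proposition handles exactly the case~(II) surfaces of~\Ref{subsec}{shaun}, for which the projective tangent hyperplane ${\bf T}_S A$ acquires a base point and Joshi's criterion~\Ref{prp}{pazienza} does not apply; the plan is to run instead the infinitesimal Noether--Lefschetz method of Griffiths--Harris~\cite{grihar}, adapted to three-folds by Lopez~\cite{lopez} and to linear systems with base locus by Brevik--Nollet~\cite{brenol}, on the systems of (a) and (b) directly. Write $\cL$ for the system in question, and for a very general $X\in\cL$ let $S\subset W$ be its strict transform: in case (a) the constraints on $(p,F)$ together with the dimension counts of~\Ref{subsec}{vitadapecora} force $X$ smooth with $S\overset{\sim}{\to}X$, while in case (b) the surface $S$ is smooth, $X=\pi(S)$ has a single ODP at $x\in C_j$, and $\pi|_S\colon S\to X$ resolves it with exceptional line $\ell=\pi^{-1}(x)$. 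Since $\pi^{*}\cO_{\PP^3}(d)(-E)$ is positive, $H^1(S,\cO_S)=0$, so $\CH^1(S)_{\QQ}=\NS(S)_{\QQ}$ is made of Hodge classes and the Noether--Lefschetz locus of $\cL$ is a countable union of proper closed subsets. First I would reduce to a contradiction: if the conclusion failed for a very general member, some primitive $\lambda\in H^{1,1}(S)\cap H^2(S;\QQ)$, not in the span of the restrictions of $\CH^1(W)$ (and, in case (b), of $[\ell]$), would remain of type $(1,1)$ over \emph{all} of $\cL$; represent it by a curve $Y\subset S$.

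The differential-geometric core is the computation of the tangent space to the Noether--Lefschetz locus of $\lambda$. I would use the identifications $\cN_{S/W}=\pi^{*}\cO_{\PP^3}(d)(-E)|_S$ and, since $K_W=\pi^{*}\cO_{\PP^3}(-4)+E$ and the two copies of $E$ cancel, $K_S=(K_W+S)|_S=\pi^{*}\cO_{\PP^3}(d-4)|_S$; thus $H^{2,0}(S)=H^0(S,K_S)$ is spanned by degree-$(d-4)$ forms pulled back from $\PP^3$, and $T_{[S]}\cL$ is the space of sections of $\cN_{S/W}$ satisfying the incidence and tangency conditions defining $\cL$ at $p$. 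The class $\lambda$ stays of type $(1,1)$ along $\sigma\in T_{[S]}\cL$ precisely when the cup product of $\lambda$ with the Kodaira--Spencer image of $\sigma$ pairs to zero against every $\omega\in H^0(S,K_S)$; geometrically this means that the first-order deformation of $S$ in the direction $\sigma$ carries $Y$ along, i.e.
\[
\{\sigma|_Y : \sigma\in T_{[S]}\cL\}\ \subseteq\ \im\bigl(H^0(Y,\cN_{Y/W})\to H^0(Y,\cN_{S/W}|_Y)\bigr),
\]
the right-hand side being, by $0\to\cN_{Y/S}\to\cN_{Y/W}\to\cN_{S/W}|_Y\to0$, the kernel of the connecting map into $H^1(Y,\cN_{Y/S})$. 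Our contradiction hypothesis forces this inclusion to hold at every $S\in\cL$.

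Classes in the expected span satisfy the inclusion for free, since a curve $D|_S$ with $D\in\CH^1(W)$ deforms with $S$ by re-restricting $D$; the content is that a non-expected $Y$ must violate it, and establishing this is the hard part. Because $\pi^{*}\cO_{\PP^3}(d-3)(-E)$ is very ample, the set $\{\sigma|_Y\}$ fills $H^0(Y,\cN_{S/W}|_Y)$ up to the few conditions imposed at $p$; on the other hand adjunction on $S$ gives $\deg\cN_{Y/S}=2g(Y)-2-\deg(K_S|_Y)$ with $K_S|_Y$ of non-negative degree for $d\ge4$, so $\cN_{Y/S}$ is special enough that the image of $H^0(Y,\cN_{Y/W})$ is a proper subspace of $H^0(Y,\cN_{S/W}|_Y)$ unless $Y$ is cut from $W$. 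I expect the delicate point to be carrying out this comparison on the blow-up, keeping track of the base curve $C$ and of the exceptional divisors; this is where hypothesis~(2) is indispensable, because a planar $C_k$ would make the surfaces through it carry extra plane-section classes and leave room for a genuinely new $\lambda$, whereas non-planarity of every $C_j$ excludes these and forces $[Y]\in\langle c_1(\cO_S(1)),C_1,\dots,C_n\rangle$, the desired contradiction. The tangency constraint~\eqref{contenimento} enters to ensure that restricting from $\Lambda(d)$ to $\Gamma_{p,F}(d)$ discards too few sections to rescue a non-expected $Y$, and in case (b) the identification $\Sigma_j(d)^{\vee}=E_j$ plays the analogous role.

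Finally I would descend from $S$ to $X$. In case (a) we have $S\cong X$ and are done; in case (b) the conclusion $\CH^1(S)_{\QQ}=\langle\text{restrictions from }W,\ [\ell]\rangle$ descends along $\pi|_S\colon S\to X$, under which $[\ell]$ becomes trivial (the splitting $\NS(S)_{\QQ}=\NS(X)_{\QQ}\oplus\QQ[\ell]$ coming from $\ell^2=-2$), so $\CH^1(X)_{\QQ}$ is generated by $c_1(\cO_X(1))$ and the classes of $C_1,\dots,C_n$, as claimed.
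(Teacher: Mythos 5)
Your outline is not a proof, and it also diverges from the paper's actual method; the two gaps below each sink it as written. First, the pivotal translation --- that $\lambda$ staying of type $(1,1)$ along every $\sigma\in T_{[S]}\cL$ ``geometrically means'' the representing curve $Y$ deforms to first order along every such $\sigma$ --- is false: deformability of a chosen representing curve \emph{implies} the Hodge-theoretic condition, not conversely, and this asymmetry is the whole difficulty of Noether--Lefschetz theory. (A version can be salvaged at a \emph{very general} point of $\cL$ by a countability/relative Hilbert-scheme spreading argument, but that is not what you wrote, and your argument invokes the inclusion ``at every $S\in\cL$''.) Second, even granting that inclusion, the entire content of the theorem is compressed into the unproven sentence that $\cN_{Y/S}$ is ``special enough'' that $H^0(Y,\cN_{Y/W})$ lands in a proper subspace ``unless $Y$ is cut from $W$''; likewise your claim that $\{\sigma|_Y\}$ fills up $H^0(Y,\cN_{S/W}|_Y)$ is unjustified --- the obstruction is $H^1(S,\cN_{S/W}(-Y))$, which has no reason to vanish since the class $\lambda$, hence $Y$, has unbounded degree. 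You flag these points as ``the hard part'' and ``delicate'', but they \emph{are} the theorem. There is also a factual error in your case analysis: case (a) does not force $X$ smooth. When $p\notin E$ and $F=T_pW$ (allowed in (a), and exactly the case needed for $A=\Delta(d)$ in~\Ref{subsec}{shaun}), the generic $X\in\Gamma_{p,F}(d)$ has an ODP at $q=\pi(p)$ by~\Ref{lmm}{genpif}, so the nodal situation is not confined to case (b).

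The paper's proof is a degeneration argument, not an infinitesimal one. It takes a generic $Y$ in the degree-$(d-1)$ analogue of the system, writes $X\cap Y=C_0\cup C_1\cup\ldots\cup C_n$ with $C_0$ integral, chooses a very general plane $P=V(l)$, and degenerates $X$ inside the pencil $V(g\cdot l+tf)\subset\PP^3\times\aff^1$ to the normal-crossing surface $Y\cup\wt{P}$, after small resolutions of the ODPs over $X\cap Y\cap P$ and, in the nodal cases, a blow-up of $\{q\}\times\aff^1$. Controlling $\Pic$ of the limit fiber then reduces, following Griffiths--Harris, Lopez and Brevik--Nollet, to two concrete statements: (a) the monodromy on $C_j\cap P$ is the full symmetric group (Lopez, Prop.~II.2.6), and (b) for distinct $a,b\in C_j\cap P$ the class $a-b$ is non-torsion on the curve $Y\cap P$. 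Non-planarity --- which in your sketch is guessed at but never actually used --- is exactly what makes (b) work: by~\Ref{lmm}{pochi} every plane through two general points of a non-planar $C_j$ cuts $Y$ in an irreducible curve (and a separate claim, proved by an $h^0$ count, shows the residual curve $C_0$ is non-planar as well), whereas for a planar $C_j$ the plane containing it would sit in every such pencil and destroy the argument.
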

Granting~\Ref{prp}{dirtan}, let us prove that the statement of~\Ref{thm}{peppapig} holds for $A$ such that $A^{\vee}$ is  contained in $W$.  We will distinguish between the following two cases:
\begin{enumerate}
\item[(IIa)]
$A\not\in\{\Sigma_1(d),\ldots,\Sigma_n(d)\}$.
\item[(IIb)]
$A\in\{\Sigma_1(d),\ldots,\Sigma_n(d)\}$.
\end{enumerate}
Suppose that~(IIa) holds. By projective duality $A$ is the closure of
\begin{equation}\label{rigata}
\bigcup_{p\in (A^{\vee})^{sm} }\Lambda_{p,T_p A^{\vee}}
\end{equation}
Let  $p \in (A^{\vee})^{sm}$ be generic. We claim that Item~(a) of~\Ref{prp}{dirtan} hold for $p$ and $F=T_p A^{\vee}$. In fact if  $A^{\vee}\not\subset E$ then $p\notin E$ by genericity. If $A^{\vee}\subset E$ then $A^{\vee}$  is contained in $E_j$ for a certain $j\in\{1,\ldots,n\}$. We claim that $A^{\vee}$ is a proper subset of  $E_j$, and it is not equal to a fiber of the restriction of $\pi$ to $E_j$. In fact, if $A^{\vee}=E_j$, then $A=E_j^{\vee}=\Sigma_j(d)$, and that contradicts the assumption that(IIa) holds. Now suppose that $A^{\vee}=\pi^{-1}(q)$ for a certain $q\in C_j$. Let   $S\in A$ be generic.  Since $A$ is the closure of~\eqref{rigata},  $S$ is tangent to $\pi^{-1}(q)$, and hence contains $\pi^{-1}(q)$ because $S$ has degree $1$ on every fiber of $E_j\to C_j$.  It follows that $S$  is tangent to $E_j$, and hence $A\subset E_j^{\vee}=\Sigma_j(d)$, contradicting the hypothesis that (IIa) holds. 

Thus Item~(a) of~\Ref{prp}{dirtan} hold for $p \in (A^{\vee})^{sm}$ generic and $F=T_p A^{\vee}$, and hence if $S\in \Lambda_{p,T_p A^{\vee}}(d)$ is  very general, then $\CH^1(X)_{\QQ}$ is generated by $c_1(\cO_X(1))$ and the classes of $C_1,\ldots,C_n$. 

On the other hand, 
since $A\not\subset\Sigma(d)$,  $S$ intersects transversely  $E$, and hence the restriction of $\pi$ to $S$ is an isomorphism $S\overset{\sim}{\lra} X$. 
It follows that $\CH^1(S)_{\QQ}$ is equal to the image of $\CH^1(W)_{\QQ}\to\CH^1(S)_{\QQ}$. This proves that there exists $S\in A$ such that $\CH^1(S)_{\QQ}$ is equal to the image of $\CH^1(W)_{\QQ}\to\CH^1(S)_{\QQ}$. Actually our argument proves that there exists such an $S$ which is smooth if $A\not=\Delta(d)$, and that if $A=\Delta(d)$ there exists such an $S$ whose singular set consists of a single ODP. If the former holds, then we are done because $\NL(A\setminus\Delta(d))$ is a countable union of closed subsets of $A\setminus\Delta(d)$, and we have shown that the complement is non-empty. If the latter holds, let $\Delta(d)^0\subset\Delta(d)$ be the  open dense subset parametrizing surfaces with an ODP and no other singular point,   
then   the set of $S\in \Delta(d)^0$ such that $\CH^1(W)\to \CH^1(S)$ is \emph{not} surjective is a countable union of closed subsets of $\Delta(d)^0$ (take a simultaneous resolution), and we are done because we have shown that the complement is non empty.

Lastly suppose that~(IIb) holds, i.e.~$A=\Sigma_j(d)$ for a certain $j\in\{1,\ldots,n\}$. By~\Ref{prp}{singgener} there exists an open dense subset $\Sigma_j(d)^0\subset \Sigma_j(d)$ with the following property.  
If $S\in \Sigma_j(d)^0$ and $X=\pi(S)$, then $X$ has a unique singular point, call it $x$ (obviously $x\in C_j$),   which is an ordinary node, and the  restriction of $\pi$ to $S$ is the blow-up of  $X$ with center $x$ (in particular $S$ is smooth). Now suppose that $S\in \Sigma_j(d)^0$ is  very general. Then  by~\Ref{prp}{dirtan} the Chow group $\CH^1(S)_{\QQ}$ is generated by  the image of $\CH^1(W)_{\QQ}\to \CH^1(S)_{\QQ}$ and the class of $\pi^{-1}(x)$. Now notice that  the set of $S\in\Sigma_j(d)^0$ such that $\CH^1(S)$ is \emph{not} generated by the image of $\CH^1(W)_{\QQ}$ together with $\pi^{-1}(x)$ is a countable union of closed subsets of $\Sigma_j(d)^0$; since the complement is not empty,  we are done.
\qed

Summing up: we have shown that in order to prove~\Ref{thm}{peppapig} it suffices to prove~ \Ref{prp}{pazienza}  and~\Ref{prp}{dirtan}. The proofs are in the following subsections.
\subsection{Infinitesimal Noether-Lefschetz results}
We will recall a key result of K.~Joshi. Let $U\subset H^0(W,\pi^{*}\cO_{\PP^3}(d)(-E))$ be a subspace and  $\sigma\in U$ be  non-zero. We let  $S:=V(\sigma)$, and we assume that $S$ is smooth.  Let  $\gm_{\sigma,U}\subset \cO_{{\PP(U)},[\sigma]}$ be the maximal ideal and $\cT_{\sigma,U}:=\Spec(\cO_{{\PP(U)},[\sigma]}/\gm^2_{\sigma})$ be  the first-order infinitesimal neighborhood of $[\sigma]$ in $\PP(U)$. We let $\cS_{\sigma,U}\to 
\cT_{_{\sigma,U}}$ be the   restriction of the family $\cS_{\Lambda}\to\Lambda$ to $\cT_{\sigma,U}$.  The Infinitesimal Noether Lefschetz (INL) 
 Theorem is valid at $(U,\sigma)$ (see Section 2 of~\cite{joshi}) if the group of line-bundles on $\cS_{\sigma,U}$ is equal to the image of the composition
 \begin{equation}
\Pic(W)\lra \Pic(W\times_{\CC}{\cT_{\sigma,U}})\lra \Pic(\cS_{\sigma,U}).
\end{equation}
Let $A\subset\Lambda(d)$ be an integral closed subset. Let  $[\sigma]$ be a smooth point of $A$,  and suppose that $S=V(\sigma)$ is smooth. Let  $\PP(U)$ be the projective tangent space to $A$ at $[\sigma]$.  If the INL Theorem holds for  $(U,\sigma)$ then $A\setminus\Delta(d)$ does \emph{not} belong to the Noether-Lefschetz locus  $NL(\Lambda(d)\setminus\Delta(d))$. 

Joshi~\cite{joshi} gave a  cohomological condition which suffices for the validity of the INL Theorem.   Suppose that $U\subset H^0(W,\pi^{*}\cO_{\PP^3}(d)(-E))$  generates 
$\pi^{*}\cO_{\PP^3}(d)(-E)$; we let $M(U)$ be the locally-free sheaf on $W$ fitting into the exact sequence
\begin{equation}\label{eccomu}
0\lra M(U)\lra U\otimes\cO_W\lra \pi^{*}\cO_{\PP^3}(d)(-E)\lra 0.
\end{equation}
\begin{prp}[K.~Joshi, Prop.~3.1 of~\cite{joshi}]\label{prp:kirti}
Let $U\subset H^0(W,\pi^{*}\cO_{\PP^3}(d)(-E))$ be a subspace which generates 
$\pi^{*}\cO_{\PP^3}(d)(-E)$. 
Let $0\not=\sigma\in U$.  Suppose that 
 $S=V(\sigma)$ is smooth, and that  
\begin{enumerate}
\item[(a)]
$H^1(W,\Omega^2_W\otimes\pi^{*}\cO_{\PP^3}(d)(-E))=0$. 
\item[(b)]
$H^1(W,M(U)\otimes K_W\otimes\pi^{*}\cO_{\PP^3}(d)(-E))=0$.
\end{enumerate}
Then the INL Theorem holds at $(U,\sigma)$.
\end{prp}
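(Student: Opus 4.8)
The plan is to prove the equivalent statement that the INL Theorem at $(U,\sigma)$ reduces to a surjectivity assertion for a multiplication map, and then to extract that surjectivity from the two vanishing hypotheses. First I would recall why line-bundles on the first-order family $\cS_{\sigma,U}\to\cT_{\sigma,U}$ are governed by Hodge theory. Since $\cT_{\sigma,U}$ is a square-zero thickening of $[\sigma]$, the obstruction to lifting a class $\gamma=c_1(L_0)\in H^1(S,\Omega^1_S)$ of a line-bundle $L_0$ on $S$ over $\cS_{\sigma,U}$ in the direction $\dot\sigma\in U/\langle\sigma\rangle$ is the image of $\delta(\dot\sigma)\cup\gamma$ in $H^2(S,\cO_S)$, where $\delta\colon H^0(S,\cN_{S/W})\to H^1(S,T_S)$ is the Kodaira--Spencer map of the conormal sequence and the cup product is followed by the contraction $T_S\otimes\Omega^1_S\to\cO_S$. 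Thus the INL Theorem holds at $(U,\sigma)$ if and only if every $\gamma$ annihilated by all of these obstructions already lies in the image of $H^2(W;\QQ)$.

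Second, I would dualise this condition. By Serre duality $H^2(S,\cO_S)^\vee\cong H^0(S,K_S)$, and on a surface $H^1(S,\Omega^1_S)$ is self-dual under the intersection form (using $T_S\otimes\Omega^2_S\cong\Omega^1_S$). Transposing the obstruction pairing turns the survival condition into the statement that $\gamma$ is orthogonal to the image of the multiplication (second fundamental form) map $m\colon H^0(S,\cN_{S/W})\otimes H^0(S,K_S)\to H^1(S,\Omega^1_S)$. Hence the INL Theorem is equivalent to $\im(m)\supseteq\bigl(\im H^2(W;\QQ)\bigr)^\perp$, i.e.\ to the surjectivity of $m$ onto the vanishing (primitive) part of $H^{1,1}(S)$. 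This reduces the whole problem to a concrete surjectivity on $S$.

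Third, and this is the technical heart, I would compute $\im(m)$ and the vanishing cohomology entirely in terms of $W$. Writing $L:=\pi^{*}\cO_{\PP^3}(d)(-E)$ and using adjunction $K_S=(K_W\otimes L)|_S$ together with the ideal-sheaf sequences $0\to G\otimes L^{-1}\to G\to G|_S\to 0$ for $G=\Omega^p_W\otimes L^{\,q}$ and the conormal sequence of $S\subset W$, the groups $H^0(S,K_S)$, $H^1(S,\Omega^1_S)$ and $H^2(S,\cO_S)$ fit into long exact sequences controlled by $H^\bullet(W,\Omega^p_W\otimes L^{\,q})$. To handle the domain $H^0(S,\cN_{S/W})=H^0(S,L|_S)$ and to linearise $m$, I would twist the kernel-bundle sequence~\eqref{eccomu}, $0\to M(U)\to U\otimes\cO_W\to L\to 0$, and feed it into the same diagram à la Griffiths--Harris--Green. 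A diagram chase then yields surjectivity of $m$ onto the vanishing part provided two groups vanish: hypothesis~(a), $H^1(W,\Omega^2_W\otimes L)=0$ (equivalently, by Serre duality on $W$, $H^2(W,\Omega^1_W\otimes L^{-1})=0$), guarantees that the fixed part of $H^2(S)$ is exactly the image of $H^2(W)$, so that the classes coming from $W$ furnish no spurious survivors; while hypothesis~(b), $H^1(W,M(U)\otimes K_W\otimes L)=0$, provides the surjectivity of $m$ onto the remaining primitive classes. Granting both, no $\gamma$ outside $\im H^2(W;\QQ)$ can survive to first order, and the INL Theorem follows.

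The step I expect to be the main obstacle is the third one: assembling the commutative diagram that simultaneously identifies the obstruction map with a connecting homomorphism and pins its cokernel to precisely the groups in~(a) and~(b). The subtlety is that $W$ is a blow-up and $L$ is only globally generated, not very ample, so the Bott-type vanishing shortcuts available for $\PP^3$ do not apply; this is exactly why the kernel bundle $M(U)$ must be introduced, following Joshi's refinement~\cite{joshi} of the method of~\cite{grihar}, and why the contributions of the exceptional divisor $E$ and the base curve $C$ must be tracked carefully through the sequences.
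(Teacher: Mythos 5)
The first thing to note is that the paper itself gives \emph{no proof} of this statement: it is imported verbatim as Prop.~3.1 of Joshi's paper \cite{joshi}, so there is no in-paper argument to compare yours against. Measured against the proof in the cited source, your sketch follows essentially the same route, namely the infinitesimal variation of Hodge structure method of Griffiths--Harris and Green that Joshi adapts: line bundles on the square-zero thickening correspond to $(1,1)$-classes killed by the Kodaira--Spencer cup products; Serre duality on $S$ (using $\Omega^1_S\cong T_S\otimes\Omega^2_S$ and $H^2(S,\cO_S)^{\vee}\cong H^0(S,K_S)$) converts the survival condition into orthogonality to the image of the multiplication map; hypothesis~(a), which as you say is Serre-dual on $W$ to $H^2(W,\Omega^1_W\otimes L^{-1})=0$ for $L=\pi^{*}\cO_{\PP^3}(d)(-E)$, identifies the kernel of the connecting map $H^1(S,\Omega^1_S)\to H^2(S,L^{-1}|_S)$ of the conormal sequence with the image of $H^2(W)$; and hypothesis~(b), fed through the kernel-bundle sequence~\eqref{eccomu} restricted to $S$ and twisted by $K_S$, gives the surjectivity of $U\otimes H^0(S,K_S)\to H^0(S,K_S\otimes L|_S)$, which is dual to the injectivity needed on the classes with nonzero connecting image. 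Your assignment of roles to (a) and (b) is the correct one.

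Two caveats, one of which is a genuine (if small) gap. Your step~1 asserts an \lq\lq if and only if\rq\rq\ that holds only modulo the kernel of the restriction $\Pic(\cS_{\sigma,U})\to\Pic(S)$; this kernel is $H^1(S,\cO_S)\otimes\gm_{\sigma,U}/\gm^2_{\sigma,U}$, whose nonzero elements are first-order deformations of $\cO_S$ not pulled back from $W$. So your argument tacitly needs $H^1(S,\cO_S)=0$, and relatedly the diagram chase in step~3 needs background vanishings such as $H^1(\cO_W)=0$ and $H^1(W,K_W\otimes L)=0$ to pass from (b) to the surjectivity on $S$; none of these is a formal consequence of (a) and (b) alone (think of an abelian threefold, where (a) and (b) can hold but the conclusion fails), so they must be standing hypotheses of the setting -- they do hold here, since $R\pi_{*}\cO_W=\cO_{\PP^3}$ and $L$ is ample in the paper's application. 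Second, the entire technical content lives in the diagram chase of your third step, which you describe but do not carry out; as written the proposal is a correct and well-aimed plan, matching the cited proof's architecture, rather than a complete proof.
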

\subsection{The generic tangent space is a base-point free linear system}\label{subsec:primocaso}
We will prove~\Ref{prp}{pazienza} by applying~\Ref{prp}{kirti}. 
\begin{lmm}\label{lmm:pazienza}
Suppose that
\begin{equation}\label{duezeri}
0=H^1(\PP^3,\cI_C\otimes T_{\PP^3}(d-4))=H^1(C,T_C(d-4)).
\end{equation}
Then $H^1(W,\Omega^2_W\otimes\pi^{*}\cO_{\PP^3}(d)(-E))=0$.
\end{lmm}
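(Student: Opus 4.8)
The plan is to strip off the exceptional and canonical twists, turn the statement into a vanishing for the tangent bundle $T_W$, and then feed the two given hypotheses into the tangent sequence of the blow-up.

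First I would trade $\Omega^2_W$ for $T_W$. On the smooth threefold $W$ one has $\Omega^2_W\cong T_W\otimes\omega_W$ (for a rank-$3$ bundle, $\bigwedge^2\cong(\ )^{\vee}\otimes\bigwedge^3$), and since $C$ has codimension $2$ the discrepancy formula gives $\omega_W=\pi^{*}\cO_{\PP^3}(-4)\otimes\cO_W(E)$. Hence
\begin{equation*}
\Omega^2_W\otimes\pi^{*}\cO_{\PP^3}(d)(-E)\cong T_W\otimes\pi^{*}\cO_{\PP^3}(-4)(E)\otimes\pi^{*}\cO_{\PP^3}(d)(-E)\cong T_W\otimes\pi^{*}\cO_{\PP^3}(d-4),
\end{equation*}
the $\cO_W(E)$ and $\cO_W(-E)$ cancelling. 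So it suffices to prove $H^1(W,T_W\otimes L)=0$, where $L:=\pi^{*}\cO_{\PP^3}(d-4)$.

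Next I would use the tangent sequence of the blow-up. The differential $d\pi\colon T_W\to\pi^{*}T_{\PP^3}$ is injective (it is an isomorphism over $W\setminus E$ and $T_W$ is torsion-free), and a local computation in blow-up coordinates identifies its cokernel with $i_{*}Q$, where $i\colon E\hra W$ and $Q$ is the universal quotient line bundle of $\rho\colon E=\PP(\cN_{C/\PP^3})\to C$, i.e.\ the cokernel of the tautological inclusion $\cO_E(-1)\hra\rho^{*}\cN_{C/\PP^3}$. Tensoring
\begin{equation*}
0\lra T_W\lra\pi^{*}T_{\PP^3}\lra i_{*}Q\lra 0
\end{equation*}
with $L$ and taking cohomology, I would use that $\pi_{*}\cO_W=\cO_{\PP^3}$ and $R^q\pi_{*}\cO_W=0$ for $q>0$ (blow-up of a smooth centre), so by the projection formula $H^i(W,\pi^{*}T_{\PP^3}\otimes L)=H^i(\PP^3,T_{\PP^3}(d-4))$; the Euler sequence (together with the vanishing of intermediate cohomology of line bundles on $\PP^3$) gives $H^1(\PP^3,T_{\PP^3}(d-4))=0$. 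The long exact sequence then collapses to
\begin{equation*}
H^1(W,T_W\otimes L)\cong\coker\Big(H^0(\PP^3,T_{\PP^3}(d-4))\xrightarrow{\ \beta\ }H^0\big(E,Q\otimes\rho^{*}\cO_C(d-4)\big)\Big),
\end{equation*}
so everything reduces to surjectivity of $\beta$.

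Finally I would identify $\beta$ and invoke the hypotheses. Since $\rho_{*}\cO_E(-1)=R^1\rho_{*}\cO_E(-1)=0$, pushing the tautological sequence forward yields $\rho_{*}Q\cong\cN_{C/\PP^3}$, so the target of $\beta$ is $H^0(C,\cN_{C/\PP^3}(d-4))$; and the local description of the cokernel shows that, under $H^0(E,-)=H^0(C,\rho_{*}-)$, the map $\beta$ factors as the restriction $r\colon H^0(\PP^3,T_{\PP^3}(d-4))\to H^0(C,T_{\PP^3}(d-4)|_C)$ followed by the map $a_{*}$ induced by the normal-bundle quotient $a\colon T_{\PP^3}|_C\twoheadrightarrow\cN_{C/\PP^3}$. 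The sequence $0\to\cI_C\otimes T_{\PP^3}(d-4)\to T_{\PP^3}(d-4)\to T_{\PP^3}(d-4)|_C\to0$ makes $r$ surjective because $H^1(\PP^3,\cI_C\otimes T_{\PP^3}(d-4))=0$, and the twisted normal-bundle sequence $0\to T_C(d-4)\to T_{\PP^3}(d-4)|_C\to\cN_{C/\PP^3}(d-4)\to0$ makes $a_{*}$ surjective because $H^1(C,T_C(d-4))=0$. Hence $\beta=a_{*}\circ r$ is surjective and the lemma follows. The \emph{main obstacle} is precisely this middle step: one must pin down the cokernel sheaf as $i_{*}Q$ with no spurious twist by a line bundle pulled back from $C$, and check that the induced map on sections is exactly the normal-bundle projection $a$. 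Both the identification $\rho_{*}Q\cong\cN_{C/\PP^3}$ and the factorization $\beta=a_{*}\circ r$ hinge on a careful local computation in blow-up charts and on the projectivization convention for $E=\PP(\cN_{C/\PP^3})$.
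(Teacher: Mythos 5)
Your proof is correct and follows essentially the same route as the paper's: the identification $\Omega^2_W\otimes\pi^{*}\cO_{\PP^3}(d)(-E)\cong T_W\otimes\pi^{*}\cO_{\PP^3}(d-4)$, the blow-up tangent sequence with cokernel supported on $E$ (your $i_{*}Q$ is the paper's $\iota_{*}\xi$), the Leray/projection-formula vanishing of $H^1(W,\pi^{*}T_{\PP^3}(d-4))$, and the factorization of the resulting map on sections through $H^0(C,T_{\PP^3}(d-4)|_C)\to H^0(C,\cN_{C/\PP^3}(d-4))$, with the two hypotheses in~\eqref{duezeri} giving surjectivity of the two factors. The only cosmetic difference is that you pin down the cokernel as the universal quotient bundle of $E=\PP(\cN_{C/\PP^3})$, whereas the paper simply records it as an invertible sheaf $\xi$ fitting in the sequence $0\to\cO_W(E)|_E\to\rho^{*}\cN_{C/\PP^3}\to\xi\to 0$, which is all the argument needs.
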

\begin{proof}
Since $\Omega^2_W\cong T_W\otimes K_W$ it is equivalent to prove that 
\begin{equation}\label{dadimo}
0=H^1(W, T_W\otimes K_W\otimes\pi^{*}\cO_{\PP^3}(d)(-E))=H^1(W, T_W\otimes \pi^{*}\cO_{\PP^3}(d-4)).
\end{equation}
 Let $\rho\colon E\to C$ be the restriction of $\pi$. Restricting the differential of $\pi$ to $E$, one gets an exact sequence 
\begin{equation}\label{aniene}
0\lra  \cO_W(E)|_{E}\lra \rho^{*} \cN_{C/\PP^3}\lra \xi\lra 0
\end{equation}
of sheaves on $E$, where $\xi$ is an invertible sheaf.  
Let $\iota\colon E\hra W$ be the inclusion map. The differential of $\pi$ gives the exact sequence
\begin{equation}
0\lra  T_W\otimes \pi^{*}\cO_{\PP^3}(d-4)\lra \pi^{*}T_{\PP^3}(d-4)\lra \iota_{*}(\xi \otimes \rho^{*}\cO_{C}(d-4))\lra 0.
\end{equation}
Below is a piece of the associated long exact sequence of cohomology:
\begin{equation}\label{raritan}
\scriptstyle
H^0(W,\pi^{*}T_{\PP^3}(d-4))\to H^0(W,\iota_{*}(\xi\otimes \rho^{*}\cO_{C}(d-4))) \to   H^1(W,T_W\otimes \pi^{*}\cO_{\PP^3}(d-4)) 
\to H^1(W,\pi^{*}T_{\PP^3}(d-4)).
\end{equation}
We claim that $H^1(W, \pi^{*}T_{\PP^3}(d-4)) =0$. In fact the spectral sequence associated to $\pi$ and abutting to the cohomology $H^q(W, \pi^{*}T_{\PP^3}(d-4))$ gives an exact sequence 
\begin{equation}
0\to H^1(\PP^3,\pi_{*}\pi^{*}T_{\PP^3}(d-4)) \to H^1(W, \pi^{*}T_{\PP^3}(d-4))\to H^0(\PP^3,R^1\pi_{*}\pi^{*}T_{\PP^3}(d-4))\to 0.
\end{equation}
Now $\pi_{*}\pi^{*}T_{\PP^3}(d-4)\cong T_{\PP^3}(d-4)$ and hence $H^1(\PP^3,\pi_{*}\pi^{*}T_{\PP^3}(d-4))=0$. Moreover  
$R^1\pi_{*}\pi^{*}T_{\PP^3}(d-4)=0$ because $R^1\pi_{*}\cO_W=0$, and hence $H^1(W, \pi^{*}T_{\PP^3}(d-4)) =0$. By~\eqref{raritan},  in order to complete the proof it suffices to show that the map
\begin{equation}\label{delaware}
H^0(W,\pi^{*}T_{\PP^3}(d-4))\to H^0(W,\iota_{*}(\xi\otimes \rho^{*}\cO_{C}(d-4))) 
\end{equation}
is surjective. The long exact cohomology sequence associated to~\eqref{aniene} gives an isomorphism 
\begin{equation*}
H^0(C,\cN_{C/\PP^3}(d-4))\overset{\sim}{\lra} H^0(W,\iota_{*}(\xi\otimes \rho^{*}\cO_{C}(d-4))),
\end{equation*}
 and moreover the map of~\eqref{delaware} is identified with the composition
\begin{equation}
H^0(\PP^3,T_{\PP^3}(d-4))\overset{\alpha}{\lra} H^0(C, T_{\PP^3}(d-4)|_C) \overset{\beta}{\lra} H^0(C,\cN_{C/\PP^3}(d-4)).
\end{equation}
The map $\alpha$ is surjective by the first vanishing in~\eqref{duezeri}, while $\beta$ is surjective  by the second vanishing in~\eqref{duezeri}. 
\end{proof}
Let  $U\subset H^0(\PP^3,\cI_{C}(d))$ be a subspace which generates $\cI_{C}(d)$; we let $\ov{M}(U)$ be the sheaf on $\PP^3$ fitting into the exact sequence
\begin{equation}\label{fascionuc}
0\lra \ov{M}(U)\lra U\otimes\cO_{\PP^3}\lra \cI_{C}(d)\lra 0.
\end{equation}
The curve $C$ is a local complete intersection because $C$ is smooth, and hence $\ov{M}(U)$ is locally-free. 
\begin{lmm}\label{lmm:santa}
Suppose that the hypotheses of~\Ref{lmm}{pazienza} hold and that in addition the sheaf $\cI_{C}$ is $d$-regular. Let $U\subset H^0(\PP^3,\cI_{C}(d))$ be a subspace which generates $\cI_{C}(d)$, and let $c$ be its codimension. Then $\bigwedge^p \ov{M}(U)$ is $(p+c)$-regular.  
\end{lmm}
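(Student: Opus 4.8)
The plan is to exploit the exact sequence~\eqref{fascionuc} defining $\ov{M}(U)$ together with the hypothesis that $\cI_C$ is $d$-regular, and to proceed by induction on $p$ using the exterior-power complexes (the ``Eagon--Northcott''-style Koszul filtration) attached to~\eqref{fascionuc}. Recall that Castelnuovo--Mumford $m$-regularity of a sheaf $\cF$ on $\PP^3$ means $H^i(\PP^3,\cF(m-i))=0$ for all $i>0$, and that $m$-regularity implies $m'$-regularity for all $m'\ge m$. The target assertion is that $\bigwedge^p\ov{M}(U)$ is $(p+c)$-regular, where $c=\dim(H^0(\cI_C(d))/U)$ is the codimension of $U$.

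First I would record the base case $p=0$: $\bigwedge^0\ov{M}(U)=\cO_{\PP^3}$ is $0$-regular, and since $c\ge 0$ it is a fortiori $c$-regular, so the case $p=0$ holds. For the inductive step, the key device is the family of short exact sequences obtained by taking exterior powers of~\eqref{fascionuc}. Because $U\otimes\cO_{\PP^3}\to\cI_C(d)$ is surjective with locally free kernel $\ov{M}(U)$, the Koszul/exterior-power machinery yields, for each $p\ge 1$, an exact sequence relating $\bigwedge^p\ov{M}(U)$, the trivial bundle $\bigwedge^p U\otimes\cO_{\PP^3}$, and twists of $\cI_C$: concretely one splices the short exact sequences
\begin{equation}
0\lra \bigwedge^p\ov{M}(U)\lra \bigwedge^p U\otimes\cO_{\PP^3}\lra \bigwedge^{p-1}\ov{M}(U)\otimes\cI_C(d)\lra 0.
\end{equation}
Twisting this by $\cO_{\PP^3}(p+c-i)$ and reading the long exact cohomology sequence, the vanishing $H^i\bigl(\bigwedge^p\ov{M}(U)(p+c-i)\bigr)=0$ for $i>0$ will follow once I control the cohomology of the middle and right-hand terms. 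The middle term is a direct sum of copies of $\cO_{\PP^3}(p+c-i)$, whose higher cohomology vanishes in the relevant range on $\PP^3$; the right-hand term is $\bigwedge^{p-1}\ov{M}(U)\otimes\cI_C(p+c-i)$, and here I would combine the inductive hypothesis that $\bigwedge^{p-1}\ov{M}(U)$ is $(p-1+c)$-regular with the $d$-regularity of $\cI_C$.

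The main obstacle, and the step requiring genuine care, is the cohomological bookkeeping for the right-hand tensor term: one is multiplying a sheaf known to be $(p-1+c)$-regular by $\cI_C$, which is $d$-regular, and must deduce a regularity bound for the product of the form $(p+c)$. This is where the numerical hypothesis $d\ge 5$ (and the appearance of $c$ in the exponent) enters, to guarantee that the twists line up. Regularity is not in general additive under tensor product, so I cannot simply add the two regularity indices; instead I would argue term by term, using that a tensor product $\cF\otimes\cG$ with $\cF$ locally free and $m$-regular and $\cG$ globally generated in the appropriate degree inherits a controlled regularity, and that twisting an $m$-regular sheaf by $\cO(k)$ with $k\ge 0$ preserves $(m+k)$-regularity. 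Tracking the indices $i$, $p$, and $c$ through the spliced long exact sequence is the crux; once the three vanishing inputs (higher cohomology of $\cO_{\PP^3}(p+c-i)$, the inductive regularity of $\bigwedge^{p-1}\ov{M}(U)$, and the $d$-regularity of $\cI_C$) are correctly aligned, the desired vanishing $H^i\bigl(\bigwedge^p\ov{M}(U)(p+c-i)\bigr)=0$ for $i>0$ drops out, completing the induction.
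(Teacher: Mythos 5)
Your induction hinges on the exact sequence
\begin{equation*}
0\lra \bigwedge^p\ov{M}(U)\lra \bigwedge^p U\otimes\cO_{\PP^3}\lra \bigwedge^{p-1}\ov{M}(U)\otimes\cI_C(d)\lra 0,
\end{equation*}
and this sequence is false. The exterior-power filtration you are invoking (for $0\to S\to E\to Q\to 0$ with everything locally free and $Q$ of rank one, the quotient $\bigwedge^pE/\bigwedge^pS$ is $\bigwedge^{p-1}S\otimes Q$) requires $Q$ to be \emph{invertible}, whereas $\cI_C(d)$ is a rank-one sheaf that is not locally free along $C$: at a point $x\in C$ one has $\cI_{C,x}=(f,g)$ with $f,g$ a regular sequence. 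Concretely, choose a local basis of $U$ so that $u_1\mapsto f$, $u_2\mapsto g$, $u_j\mapsto 0$ for $j\ge 3$; then $\ov{M}(U)$ is locally free with frame $u_3,\ldots,u_N,\,gu_1-fu_2$, and for $p=2$ a direct computation gives
\begin{equation*}
\bigwedge^2\bigl(U\otimes\cO\bigr)\big/\bigwedge^2\ov{M}(U)\;\cong\;\cO_x\oplus \cI_{C,x}^{\oplus(N-2)},
\qquad
\ov{M}(U)\otimes\cI_{C}(d)\;\cong\;\cI_{C,x}^{\oplus(N-1)},
\end{equation*}
and these are not isomorphic (they even require different numbers of generators). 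So your sequence fails exactly along $C$, and since the statement to be proved consists of $H^1$ and $H^2$ vanishings, an error supported on a curve cannot be discarded.

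There is a second, independent gap: even where the sequence is exact, the case $i=1$ does not ``drop out'' of your three vanishing inputs. The long exact sequence gives $H^1\bigl(\bigwedge^p\ov{M}(U)(p+c-1)\bigr)=0$ only if the map $H^0\bigl(\bigwedge^pU\otimes\cO(p+c-1)\bigr)\to H^0\bigl(\bigwedge^{p-1}\ov{M}(U)\otimes\cI_C(d+p+c-1)\bigr)$ is surjective, which is precisely the hard point and does not follow from regularity vanishings alone. Note also that in your induction the codimension $c$ is never genuinely used (it is a mere shift in the twist), so if the argument were correct it would prove that $\bigwedge^p\ov{M}(U)$ is $p$-regular for \emph{every} generating $U$; this is already false for kernels of evaluation maps (a $2$-dimensional base-point free $U\subset H^0(\cO_{\PP^1}(d))$ has $\ov{M}(U)\cong\cO_{\PP^1}(-d)$, which is $(1+c)$-regular and not better), showing that the codimension must enter exactly at that surjectivity step. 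The paper's proof avoids both problems by a different route: it first checks directly from~\eqref{fascionuc} and the $d$-regularity of $\cI_C$ that the kernel $\ov{M}$ attached to the \emph{full} space $H^0(\cI_C(d))$ is $1$-regular, deduces that $\bigwedge^p\ov{M}$ is $p$-regular from the tensor-product regularity theorem (Corollary 1.8.10 of~\cite{roblaz}), and only then descends from $H^0(\cI_C(d))$ to the codimension-$c$ subspace $U$ by the argument of Kim (\cite{kim}, see Example 1.8.15 of~\cite{roblaz}), an induction on $c$ rather than on $p$; that last step is where the $+c$ is produced.
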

\begin{proof}
Let $\ov{M}:=\ov{M}(H^0( \cI_{C}(d)))$. Then $\ov{M}$ is $1$-regular: in fact $H^1(\PP^3,\ov{M})=0$ because the exact sequence induced by~\eqref{fascionuc} on $H^0$ is exact by definition, and  $H^i(\PP^3,\ov{M}(1-i))=0$ for $i\ge 2$ because $\cI_{C}$ is $d$-regular. It follows that 
 $\bigwedge^p\ov{M}$ is $p$-regular (Corollary 1.8.10 of~\cite{roblaz}). Then, arguing 
as in  the proof of the Lemma on p.~371 of~\cite{kim} (see also Example 1.8.15 of~\cite{roblaz}) one gets that  $\bigwedge^p \ov{M}(U)$ is 
$(p+c)$-regular.
\end{proof}

\medskip
\noindent
{\it Proof of~\Ref{prp}{pazienza}.} 
By hypothesis there exists a smooth point $[\sigma]$ of $(A\setminus\Delta(d))$, such that   the projective tangent space ${\bf T}_S A$ is a base-point free codimension-$1$ linear subsystem of $\Lambda$.  We have  ${\bf T}_S A=\PP(U)$, where 
$U\subset H^0(\pi^{*}\cO_{\PP^3}(d)(-E))$ is a codimension-$1$ subspace which generates $\cO_{\PP^3}(d)(-E)$. We will prove that  the INL Theorem holds for  $(U,\sigma)$, and~\Ref{prp}{pazienza} will follow. 
By Joshi's \Ref{prp}{kirti}  it suffices to prove that the following hold:
\begin{enumerate}
\item[(a)]
$H^1(W,\Omega^2_W\otimes\pi^{*}\cO_{\PP^3}(d)(-E))=0$. 
\item[(b)]
$H^1(W,M(U)\otimes K_W\otimes\pi^{*}\cO_{\PP^3}(d)(-E))=0$.
\end{enumerate}
We start by noting that, since $T_{\PP^3}$ is $-1$-regular, and by hypothesis $\cI_C$ is $(d-2)$ regular, the sheaf $\cI_C\otimes T_{\PP^3}$ is $(d-3)$-regular, see Proposition 1.8.9 and Remark 1.8.11 of~\cite{roblaz}. Thus the hypotheses of~\Ref{lmm}{pazienza} are satisfied, and hence
Item~(a) holds. Let us prove that Item~(b) holds. Tensoring~\eqref{eccomu} by 
$K_W\otimes\pi^{*}\cO_{\PP^3}(d)(-E)\cong \pi^{*}\cO_{\PP^3}(d-4)$ and taking cohomology we get an  exact sequence
\begin{multline}
\scriptstyle
0\to H^0(W,M(U)\otimes \pi^{*}\cO_{\PP^3}(d-4))\to 
U\otimes H^0(W,\pi^{*}\cO_{\PP^3}(d-4))\overset{\alpha}{\to} H^0(W,\pi^{*}\cO_{\PP^3}(2d-4)(-E))\to \\
\scriptstyle
\to H^1(W,M(U)\otimes \pi^{*}\cO_{\PP^3}(d-4))\to U\otimes H^1(W,K_W\otimes\pi^{*}\cO_{\PP^3}(d)(-E)).
\end{multline}
Now $H^1(W,K_W\otimes\pi^{*}\cO_{\PP^3}(d)(-E))=0$ because by hypothesis $\pi^{*}\cO_{\PP^3}(d)(-E)$ is ample. Thus it suffices to prove that the map 
$\alpha$ is surjective.
We have an identification $H^0(W,\pi^{*}\cO_{\PP^3}(d)(-E))=H^0(\PP^3,\cI_C(d))$, and hence $U$ is identified with a codimension-$1$ subspace of 
$H^0(\PP^3,\cI_C(d))$ that we will denote by the same symbol.  Clearly it suffices to prove that the natural map
\begin{equation}\label{pezzo}
U\otimes H^0(\PP^3,\cO_{\PP^3}(d-4))\lra H^0(\PP^3,\cI_C(2d-4))
\end{equation}
is surjective. Tensorize Exact Sequence~\eqref{fascionuc} by $\cO_{\PP^3}(d-4)$ and take the associated long exact sequence of cohomology: then~\eqref{pezzo} appears in that exact sequence, and hence it suffices to prove that $H^1(\PP^3,\ov{M}(U)(d-4))=0$. By~\Ref{lmm}{santa} the sheaf  $\ov{M}(U)$ is $2$-regular, and by hypothesis $d\ge 5$:  the required vanishing follows.
\qed
\subsection{All tangent spaces at smooth points are linear systems with a base-point}\label{subsec:sgocciola}
We will prove~\Ref{prp}{dirtan}.   
We start with an elementary result. 
\begin{lmm}\label{lmm:genpif}
Assume that $\pi^{*}\cO_{\PP^3}(r-3)(-E)$ is very ample.  Let $p\in W$ and $F\subset T_pW$ be a subspace
such that one of the following holds:
\begin{enumerate}
\item
$p\notin E$ and $F\not= T_pW$,
\item
$p\notin E$ and $F= T_pW$,
\item
 $p\in E$,  and $T_p(\pi^{-1}(\pi(p)))\not\subset F\subsetneq T_pE$.
\end{enumerate}
Let $X\in\Gamma_{p,F}(r)$ (see~\Ref{dfn}{lamgam}) be  generic. Then $X$ is smooth if Item~(1) or~(3) holds, while $X$ has an ODP at $q=\pi(p)$ and is smooth elsewhere if Item~(2) holds.
\end{lmm}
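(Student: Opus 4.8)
The plan is to work upstairs on $W$, where the system has a transparent description, and then descend along $\pi$. Set $L:=\pi^{*}\cO_{\PP^3}(r)(-E)$. Since $L=\pi^{*}\cO_{\PP^3}(r-3)(-E)\otimes\pi^{*}\cO_{\PP^3}(3)$ is the tensor product of the very ample bundle $\pi^{*}\cO_{\PP^3}(r-3)(-E)$ and the base-point-free bundle $\pi^{*}\cO_{\PP^3}(3)$, it is very ample, so $\Lambda(r)=|L|$ is base-point-free. Under the identification $\Lambda(r)\cong\Gamma(r)$ of~\Ref{dfn}{lamgam}, a surface $X\in\Gamma_{p,F}(r)$ corresponds to its strict transform $S\subset W$, and $\Lambda_{p,F}(r)$ is cut out in $\Lambda(r)$ by imposing at $p$ the vanishing of the value and of the differential along $F$, i.e.~$1+\dim F\le 4$ linear conditions; proving the lemma is therefore equivalent to the corresponding assertion for the generic $S\in\Lambda_{p,F}(r)$. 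The first step is to show that the base locus of $\Lambda_{p,F}(r)$ equals $\{p\}$: for any $p'\neq p$ one exhibits a member missing $p'$ by taking a divisor in $|\pi^{*}\cO_{\PP^3}(r-3)(-E)|$ carrying the prescribed $1$-jet at $p$ and adding a general cubic from $|\cO_{\PP^3}(3)|$ missing $\pi(p')$, exactly the mechanism used in the proof of~\Ref{prp}{singgener}. Granting this, Bertini's theorem yields that the generic $S$ is smooth on $W\setminus\{p\}$.

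It then remains to analyse $S$ near $p$ and to convert the behaviour of $S$ along $E$ into smoothness of $X=\pi(S)$. The key translation, obtained by a local computation in the blow-up chart $(x,u,z)\mapsto(x,xu,z)$ with $C_j=\{x=y=0\}$, is that $X$ is singular at $c\in C_j$ if and only if $S$ contains the whole fibre $\pi^{-1}(c)$; when $S$ does not contain $\pi^{-1}(c)$ it meets that fibre transversally at one point and $\pi|_S$ is an isomorphism over a neighbourhood of $c$. Thus, away from the imposed point, smoothness of $X$ amounts to the statement that the generic $S\in\Lambda_{p,F}(r)$ contains no fibre of $E\to C$. Since the fibre-containing locus $\Sigma(r)=\bigcup_j\Sigma_j(r)$ is a proper closed subset of codimension $1$ (this is part of what is proved around~\Ref{prp}{singgener} and recorded in~\Ref{hyp}{dieci}), it suffices to check that the conditions imposed at $p$ do not force containment of the fibre through $p$, since conditions local at $p$ cannot force containment of any other fibre.

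I would then finish case by case. In case~(1) ($p\notin E$, $F\subsetneq T_pW$) the condition $F\subset T_pS$ is at most two conditions, so a general member is smooth at $p$ with $T_pS$ a general plane containing $F$; as the conditions are supported away from $E$, the general $S$ is fibre-free and $X$ is smooth everywhere. In case~(2) ($p\notin E$, $F=T_pW$) the condition forces $T_pS=T_pW$, so $S$ is singular at $p$; the $2$-jets realised by a general cubic sweep out all conics in $\PP(T_pW)$ (as in the computation behind~\Ref{rmk}{gencone}), so the tangent cone is a smooth quadric and $S$ has an ODP at $p$. Because $\pi$ is an isomorphism near $p\notin E$ and the general $S$ is fibre-free, $X$ acquires an ODP at $q=\pi(p)$ and is smooth elsewhere. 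Case~(3) ($p\in E$, hence $\dim F\le 1$) is the crux: the local computation shows that $F\subset T_pS$ forces $S\supset\pi^{-1}(\pi(p))$ precisely when $T_p(\pi^{-1}(\pi(p)))\subset F$ or $F=T_pE$, and the hypotheses $T_p(\pi^{-1}(\pi(p)))\not\subset F\subsetneq T_pE$ are exactly what rule these out; hence the general $S\in\Lambda_{p,F}(r)$ is fibre-free, and $X=\pi(S)$ is smooth.

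The main obstacle will be Case~(3): carrying out the jet computation in the blow-up chart that singles out the two forbidden subspaces $F=\langle T_p(\pi^{-1}(\pi(p)))\rangle$ and $F=T_pE$, and verifying that for $F$ avoiding these the single remaining linear condition leaves the normal $1$-jet of $X$ along $C_j$ at $c=\pi(p)$ nonzero, so that $X$ stays smooth along $C_j$. The base-locus step also needs slightly more care here than in cases~(1)--(2), since $\pi(p)\in C$ forces every member to contain $C$; one controls it by prescribing the $1$-jet of the degree-$(r-3)$ factor at $p$ and choosing the cubic factor to miss $\pi(p')$.
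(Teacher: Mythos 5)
Your overall skeleton (Bertini away from the base locus, then rule out containment of fibers of $E\to C$ so that smoothness of $S$ descends to $X=\pi(S)$) is the same as the paper's, but two of the load-bearing steps are asserted rather than proved. First, the base-locus step is mis-structured: you prescribe the $1$-jet at $p$ on the factor $D\in|\pi^{*}\cO_{\PP^3}(r-3)(-E)|$ and use the cubic only to miss $\pi(p')$. But $D+\pi^{*}Q_3$ misses $p'$ only if \emph{both} $p'\notin D$ and $\pi(p')\notin Q_3$, so you still need a member of the jet-imposed system $\{D : p\in D,\ F\subset T_pD\}$ avoiding $p'$ --- which is precisely the kind of base-locus statement you are trying to establish, one degree lower, so the argument is circular. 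Very ampleness alone does not give it: the base locus of the jet-imposed subsystem of a very ample system is $W\cap\langle p,F\rangle$ in the corresponding embedding, which can a priori be positive-dimensional. (This also differs from the mechanism of~\Ref{prp}{singgener}, where the auxiliary plane can both carry the condition and miss the point only because the imposed singularity lies \emph{on} $C$, so it is free for any member plus a plane.) The paper decouples the two requirements the other way around: the jet condition at $q$ is carried by a quadric through $q$ --- a system whose base locus is visibly $\{q\}$ --- while the very ample factor $Y\in|\cI_C(r-2)|$ is the one that misses the assigned point.

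Second, and more seriously, the sentence ``conditions local at $p$ cannot force containment of any other fibre'' is not an argument, and it is the crux of the lemma. $\Sigma(r)$ has codimension $1$ in $\Lambda(r)$ while $\Lambda_{p,F}(r)$ has codimension up to $4$, so nothing on dimension grounds prevents $\Lambda_{p,F}(r)\subset\Sigma(r)$; what is needed is the quantitative statement that for each fixed $x\in C$ the locus of members of $\Gamma_{p,F}(r)$ singular at $x$ has codimension at least $2$ in $\Gamma_{p,F}(r)$, so that the union over the one-dimensional $C$ is still a proper closed subset. This is exactly the paper's count~\eqref{ottavilla}--\eqref{arcobaleno}, proved by producing members $Y+Q$ with $Y\in|\cI_x^2(r-2)|\cap|\cI_C(r-2)|$ missing $q$ and $Q$ running through all quadrics, and its analogue~\eqref{bartolo} upstairs; your proposal contains no such count. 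Finally, Case~(3), which you correctly single out as the crux --- ruling out containment of ${\bf L}_q$ itself when $T_p{\bf L}_q\not\subset F\subsetneq T_pE$, which the paper handles via the length-two scheme $Z$, the analysis of the base locus along ${\bf L}_q$, and the modified estimate~\eqref{bartolo} with $\cI_p$ replaced by $\cI_Z$ --- is only promised (``the main obstacle will be\dots''), not carried out. So the proposal is incomplete exactly where the lemma is hardest.
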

\begin{proof}
Suppose first that~(1) or~(2) holds, i.e.~$p\notin E$, and let $q:=\pi(p)$. The linear system $\Gamma_{p,F}(r)$ has base locus $C\cup\{q\}$. In fact, let $z\in(\PP^3\setminus C\setminus\{q\})$; then there exists  $Y\in |\cI_C(r-2)|$   not containing $z$ because  $\pi^{*}\cO_{\PP^3}(r-2)(-E)$ is very ample, and a quadric $Q\in\Gamma_{p,F}(2)$ not containing $z$. Thus $Y+Q$ is an element of  $\Gamma_{p,F}(r)$ which does not contain $z$. Hence the generic 
 $X\in\Gamma_{p,F}(r)$ is smooth away from $C\cup\{q\}$ by Bertini. Considering  $Y+Q\in\Gamma_{p,F}(r)$ as above we also get that the beahviour in $q$ of the generic element of $\Gamma_{p,F}(r)$ is as claimed. It remains to prove that the generic $X\in\Gamma_{p,F}(r)$ is smooth at every point of $C$, i.e.~that $\Gamma_{p,F}(r)$ is not a subset of 
 $\Sigma(r)$. 
 The  proof that  $\Gamma_{p,F}(r)$ has base locus $C\cup\{q\}$ proves also that
 \begin{equation}\label{menoerreuno}
\dim \Gamma_{p,F}(r)=\dim|\cI_C(r)|-\dim F-1. 
\end{equation}
 Thus in order to prove that  $\Gamma_{p,F}(r)$ is not a subset of 
 $\Sigma(r)$, it suffices  to prove that for $x\in C$
\begin{equation}\label{ottavilla}
\dim |\cI_x^2(r)|\cap \Gamma_{p,F}(r) \le \dim|\cI_C(r)|-\dim F-3.
\end{equation}
By Item~(1) of~\Ref{prp}{singgener}, $\dim  |\cI_x^2(r)|\cap |\cI_C(r)|=\dim|\cI_C(r)|-2$, and hence~\eqref{ottavilla} is equivalent to   
\begin{equation}\label{arcobaleno}
\cod( |\cI_x^2(r)|\cap \Gamma_{p,F}(r), |\cI_x^2(r)|\cap |\cI_C(r)|)=\dim F+1.
\end{equation}
We must check that  imposing to $X\in |\cI_x^2(r)|\cap |\cI_C(r)|$ that it contains $q$ and that $d\pi(p)(F)\subset T_qX$, gives  $\dim F+1$ linearly independent conditions.  By Item~(1) 
of~\Ref{prp}{singgener}, there exists   $Y\in |\cI_x^2(r-2)|\cap |\cI_C(r-2)|$   not 
containing $q$. Consider the  linear subsystem  $A\subset |\cI_x^2(r)|\cap |\cI_C(r)|$  whose elements are $Y+Q$, where $Q\in |\cO_{\PP^3}(2)|$;   imposing to $X\in A$ that it contains $q$ and that $d\pi(p)(F)\subset T_qX$, gives  $\dim F+1$ linearly independent conditions,  and hence~\eqref{arcobaleno} follows. This finishes the proof that if~(1)  holds then ($1'$) holds, and that  if~(2)  holds then ($2'$) holds. 

Now suppose that~(3) holds. Suppose that $F=\{0\}$, and let  $S\in \Lambda_{p,F}(r)= |\cI_p\otimes\pi^{*}\cO_{\PP^3}(r)(-E)|$
 be generic. Then $S$ is  smooth at $p$ because $\pi^{*}\cO_{\PP^3}(r)(-E)$ is very ample, and by Bertini's Theorem it is smooth away from $p$ as well. In order to prove that $X=\pi(S)$ is smooth we must check that $S$ does not contain any of the lines ${\bf L}_x:=\pi^{-1}(x)$ for $x\in C$. Since  $\pi^{*}\cO_{\PP^3}(r)(-E)$ is very ample,
\begin{equation}\label{bartolo}
\scriptstyle
\cod(|\cI_{{\bf L}_x}\otimes\pi^{*}\cO_{\PP^3}(r)(-E)| \cap |\cI_p\otimes\pi^{*}\cO_{\PP^3}(r)(-E)|,\,|\cI_p\otimes\pi^{*}\cO_{\PP^3}(r)(-E)|)=
\begin{cases}
\scriptstyle 1 & \scriptstyle \text{if $x=q$},\\
\scriptstyle 2 & \scriptstyle \text{if $x\not=q$}.
\end{cases}
\end{equation}
It follows  that a generic  $S\in |\cI_p\otimes\pi^{*}\cO_{\PP^3}(r)(-E)|$ does not contain any  ${\bf L}_x$. 

We are left to deal with the case of a $1$-dimensional  $F\subset T_pE$ transverse to $T_p(\pi^{-1}(q))$. Let $Z\subset W$ be the $0$-dimensional scheme of length $2$ supported at $p$, with tangent space $F$; thus $Z\subset E$. We must prove that a generic  $S\in |\cI_Z\otimes\pi^{*}\cO_{\PP^3}(r)(-E)|$ is  smooth and contains no line ${\bf L}_x$ where $x\in C$. 

We claim that the (reduced) base-locus of $|\cI_Z\otimes\pi^{*}\cO_{\PP^3}(r)(-E)|$ is $p$. In fact no  $z\in({\bf L}_q\setminus\{p\})$ is in the base-locus of   
 $|\cI_Z\otimes\pi^{*}\cO_{\PP^3}(r)(-E)|$ because ${\bf L}_q$ is a line and there exists  $S\in |\cI_Z\otimes\pi^{*}\cO_{\PP^3}(r)(-E)|$ which is not tangent to ${\bf L}_q$ at $p$. Moreover no $z\in(W\setminus {\bf L}_q)$  is in the base-locus of   
 $|\cI_Z\otimes\pi^{*}\cO_{\PP^3}(r)(-E)|$ because of the following argument. There exist  $T\in |\cI_p\otimes\pi^{*}\cO_{\PP^3}(r-1)(-E)|$  not containing $z$, and a plane $P\subset\PP^3$  containing $q$ and  not containing $\pi(z)$; then $(T+P)\in |\cI_Z\otimes\pi^{*}\cO_{\PP^3}(r)(-E)|$  does not contain $z$.  This proves  that the (reduced) base-locus of $|\cI_Z\otimes\pi^{*}\cO_{\PP^3}(r)(-E)|$ is $p$; it follows that the generic 
 $S\in |\cI_Z\otimes\pi^{*}\cO_{\PP^3}(r)(-E)|$ is  smooth. 
 
 We finish by showing that~\eqref{bartolo} holds with  $\cI_p$ replaced by $\cI_Z$. 
The case $x=q$ is immediate. If $x\in C\setminus\{q\}$ we get the result by considering elements 
$(T+P)\in |\cI_Z\otimes\pi^{*}\cO_{\PP^3}(r)(-E)|$ where $P$ is a fixed plane containing $q$ and not containing $x$, and  
$T\in |\cI_p\otimes\pi^{*}\cO_{\PP^3}(r-1)(-E)|$.
\end{proof}
\begin{rmk}\label{rmk:verano}
The proof of~\Ref{lmm}{genpif} shows  that, if Item~(2) holds,  the projectivized tangent cone at $q$ of the generic $X\in\Gamma_{p,F}(r)$ is the generic conic in $\PP(T_q\PP^3)$.
\end{rmk}
\medskip 
\n
{\it Proof of~\Ref{prp}{dirtan}.}
 Let $r\in\{d-1,d\}$. Suppose that $p\in W$, $F\subset T_pW$, and either $p\notin E$, or else $p\in E$ and~\eqref{contenimento} holds. By~\Ref{lmm}{genpif}  there exists an open dense subset $\cU_{p,F}(r)\subset \Gamma_{p,F}(r)$ such that for $X\in \cU_{p,F}(r)$ the following holds:
\begin{enumerate}
\item
$X$ is smooth if $p\notin E$ and $F\not= T_p W$, or $p\in E$.
\item
$X$ has an ODP at $q=\pi(p)$, and  is smooth elsewhere, if  $p\notin E$ and $F= T_p W$.
\end{enumerate}
Similary, let  $j\in\{1,\ldots,n\}$, and $q\in C_j$. By~\Ref{prp}{singgener} there exists an open dense subset $\cU_{q,j}(r)\subset |\cI_q^2(r)|\cap \Sigma_j(r)$ such that every $X\in \cU_{q,j}(r)$ has an ODP at $q$ and is smooth elsewhere. 
It will suffice to prove that if $X$ is a very general element of $\cU_{p,F}(r)$ or of $\cU_{q,j}(r)$, then 
$\CH^1(X)_{\QQ}$ is generated by $c_1(\cO_X(1))$ and the classes of $C_1,\ldots,C_n$. 
Notice that if $X$ is an element of $\cU_{p,F}(r)$ or of $\cU_{q,j}(r)$, then 
 $X$ is $\QQ$-factorial. More precisely: if $D$ is a Weil divisor on $X$ then $2D$ is a Cartier divisor. Let $\NL(\cU_{p,F}(d))\subset \cU_{p,F}(d)$ be the subset of $X$ such that $\Pic(X)\otimes\QQ$ is not generated by $\cO_X(1)$ and $\cO_X(2C_1),\ldots,\cO_X(2C_n)$, and define similarly 
  $\NL(\cU_{q,j}(d))\subset \cU_{q,j}(d)$. Then  $\NL(\cU_{p,F}(d))$ is a countable union of closed subsets of $\cU_{p,F}(d)$ (there exists a simultaneous resolution if the surfaces in  $\cU_{p,F}(d)$ are not smooth), and similarly for   $\NL(\cU_{q,j}(d))$. 
  Hence it suffices to prove that    $\cU_{p,F}(d)\setminus\NL(\cU_{p,F})(d)$ and  $\cU_{q,j}(d)\setminus \NL(\cU_{q,j}(d))$ are not empty. 

 Let $Y$ be an element of $\cU_{p,F}(d-1)$ or of $\cU_{q,j}(d-1)$, and let $X$ be a generic element of $\cU_{p,F}(d)$ or of $\cU_{q,j}(d)$. 
 Since $\pi^{*}\cO_{\PP^3}(d)(-E)$ is very ample and $X$ is generic,  the intersection of $X$ and $Y$  is reduced, and there exists an integral curve $C_0\subset\PP^3$ such that its irreducible decomposition is
\begin{equation}\label{decomposizione}
X\cap Y=C_0\cup C_1\cup\ldots\cup C_n.
\end{equation}
Now let $P\subset\PP^3$ be a generic plane, in particular transverse to $C_0\cup C_1\cup\ldots\cup C_n$. Let $X=V(f)$, $Y=V(g)$ and $P=V(l)$. Let
\begin{equation}
\cZ:=V(g\cdot l+t f)\subset \PP^3\times\aff^1.
\end{equation}
The projection $\cZ\to\aff^1$ is a family of degree-$d$ surfaces, with central fiber $Y+P$. The $3$-fold $\cZ$ is singular. First $\cZ$ is singular at the points $(x,0)$ such that $x\in X\cap Y\cap P$, and it has an ODP at each of these points  because $P$ is transverse to $C_0\cup C_1\cup\ldots\cup C_n$. Moreover   
\begin{enumerate}
\item[(I)]
$\cZ$  has no other singularities if we are dealing with $\cU_{p,F}(d)$ and $F\not= T_pW$,  
\item[(II)]
$\cZ$ is also singular at $\{q\}\times\aff^1$ if  we are dealing with $\cU_{p,F}(d)$ and $F=T_p W$, or if we are dealing with $\cU_{q,j}(d)$. 
\end{enumerate}
We desingularize $\cZ$ as follows. The ODP's are eliminated by a small resolution (we follow p.~35 of~\cite{grihar}, and choose a specific small resolution among  the many possible ones), while to desingularize  $\{q\}\times\aff^1$ we blow-up that curve:  let $\wh{\cZ}\to\cZ$ be the birational morphism. Then $\wh{\cZ}$ is smooth (if  $p\notin E$ and $F=T_p W$, or if we are dealing with $\cU_{q,k}(d)$, then  $\wh{\cZ}$  is smooth over  $\{q\}\times\aff^1$ by~\Ref{rmk}{gencone} and~\Ref{rmk}{verano}).

The composition of $\wh{\cZ}\to\cZ$ and the projection $\cZ\to\aff^1$ is a flat family  of surfaces $\varphi\colon\wh{\cZ}\to\aff^1$. The central fiber $\wh{Z}_0:=\varphi^{-1}(0)$ has normal crossings, it is  the union of $Y$ and the blow-up $\wt{P}$ of $P$ at the points of $X\cap Y\cap P$,    the curve $Y\cap P$ being glued to its strict transform in $\wt{P}$.  There will be an open dense  $B\subset\aff^1$ containing $0$  such that  $\wh{Z}_t:=\varphi^{-1}(t)$  is smooth for $t\in B\setminus\{0\}$, and it is  isomorphic to $Z_t:=V(g\cdot l+tf)$ in Case~(I), while it is  the blow-up of $Z_t$ at $q$ (an ODP) in Case~(II).
We replace $\wh{\cZ}$ by $ \varphi^{-1}(B)$ but we do not give it a new name.  

One proves that if $P$ is very general, then the following hold: 
\begin{enumerate}
\item[($\text{I}'$)]
In Case~(I), if $t$ is  very general in $B\setminus\{0\}$, then $\Pic(\wh{Z}_t)\otimes\QQ$ is generated by the classes of $\cO_{\wh{Z}_t}(1)$,  $\cO_{\wh{Z}_t}(C_1),\ldots,\cO_{\wh{Z}_t}(C_n)$. (Notice that $\wh{Z}_t=Z_t$ because we are in case~(I).)
\item[($\text{II}'$)]
In Case~(II), if $t$ is  very general in $B\setminus\{0\}$,  letting $\mu_t\colon \wh{Z}_t\to Z_t$ be the blow-up of $q$ and $R_t\subset\wh{Z}_t$ the exceptional curve, the group  $\Pic(\wh{Z}_t)\otimes\QQ$ is generated by the classes of $\mu_t^{*}\cO_{Z_t}(1)$,  $\mu_t^{*}\cO_{Z_t}(2C_1),\ldots,\mu_t^{*}\cO_{Z_t}(2C_n)$ and $\cO_{\wh{Z}_t}(R_t)$.
\end{enumerate}
One does this by controlling the Picard group of the degenerate fiber $\wh{Z}_0$. As proved in~\cite{grihar,lopez,brenol}  it suffices  to show that the following hold:
\begin{enumerate}
\item[(a)]
Let $\cV\subset|\cO_{\PP^3}(1)|$ be the open subset of planes intersecting transversely $C_0\cup\ldots\cup C_n$, let $I\subset (C_0\cup\ldots\cup C_n)\times \cV$ be the incidence subset and $\rho\colon I\to\cV$ be the natural finite map: then the mododromy of $\rho$ acts on a fiber $(D_0,\ldots, D_n, P)$ as the product of the symmetric groups $\gS_{\deg C_0}\times\ldots\times \gS_{\deg C_n}$. 
\item[(b)]
Let   $j\in\{0,\ldots,n\}$, let  $P\subset\PP^3$ be a very general plane, and let  $a,b\in C_j\cap P$ be distinct points; then the divisor 
class $a-b$ on the (smooth) curve $Y\cap P$ is not torsion.
\end{enumerate}
Now Item~(a) is Proposition II.2.6 of~\cite{lopez}. It remains to prove that~(b) holds. To this end we will show that $C_0$ is not planar and we will control the set of planes $P$ such that  $P\cap Y$ is reducible (see the proof of Item~(b) of Lemma 3.4 of~\cite{brenol}).
\begin{clm}
The curve $C_0$ (see~\eqref{decomposizione}) is not planar.
\end{clm}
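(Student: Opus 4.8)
The plan is to reduce the Claim to a degree computation. The surfaces $X$ and $Y$ have degrees $d$ and $d-1$ and both contain $C=C_1\cup\ldots\cup C_n$; moreover $X\cap Y$ is reduced with irreducible decomposition~\eqref{decomposizione}. Comparing $1$-cycles in $\PP^3$ and invoking B\'ezout, one gets
\begin{equation*}
\deg C_0=\deg X\cdot\deg Y-\sum_{j=1}^n\deg C_j=d(d-1)-\sum_{j=1}^n\deg C_j.
\end{equation*}
The whole point is to show that this number is strictly larger than $d-1$.

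To bound $\sum_j\deg C_j$ from above I would invoke hypothesis~(1), that $\pi^{*}\cO_{\PP^3}(d-3)(-E)$ is very ample (note $d\ge 4$, so $d-3\ge 1$). By Bertini two general members of $|\pi^{*}\cO_{\PP^3}(d-3)(-E)|$ are smooth and irreducible, so their images $F,G\subset\PP^3$ are distinct irreducible surfaces of degree $d-3$, each containing $C$. As $F$ is irreducible and $G\neq F$ has the same degree, $F$ and $G$ have no common component, whence $F\cap G$ is a complete-intersection curve of degree $(d-3)^2$ containing $C$. Therefore $\sum_{j=1}^n\deg C_j=\deg C\le (d-3)^2$, and
\begin{equation*}
\deg C_0\ge d(d-1)-(d-3)^2=5d-9>d-1\qquad\text{for }d\ge 4.
\end{equation*}

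Finally I would rule out planarity. Suppose $C_0\subset P_0$ for a plane $P_0\subset\PP^3$. Since $C_0\subset Y$ and $Y$ is integral of degree $d-1\ge 3$ — it is smooth, or has a single ODP, by~\Ref{lmm}{genpif} — the plane $P_0$ is not a component of $Y$, so $Y\cap P_0$ is a curve of degree $d-1$ in $P_0\cong\PP^2$ containing $C_0$. Hence $\deg C_0\le d-1$, contradicting the bound just obtained; so $C_0$ is not planar.

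The main point, and the only step needing care, is the degree lower bound on $C_0$: it rests on the complete-intersection estimate $\deg C\le(d-3)^2$, which in turn relies on the Bertini irreducibility of two general degree-$(d-3)$ surfaces through $C$ furnished by the very ampleness hypothesis. Note also that it is essential to compare $C_0$ with the lower-degree surface $Y$, since the analogous comparison with $X$ would only yield the weaker bound $\deg C_0\le d$. Once these degree estimates are in hand, the contradiction is immediate.
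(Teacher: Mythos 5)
Your proof is correct, but it takes a genuinely different route from the paper's. The paper argues by a dimension count of linear systems: very ampleness of $\pi^{*}\cO_{\PP^3}(d-3)(-E)$ supplies a non-zero $\tau\in H^0(\PP^3,\cI_C(d-3))$, whence $h^0(\PP^3,\cI_C(d))\ge 20$ after multiplying $\tau$ by cubics; on the other hand, if $C_0$ were planar, the restriction map $\alpha\colon H^0(\PP^3,\cI_C(d))\to H^0(Y,\cO_Y(d))$ would have image contained in $H^0(Y,\cO_Y(C_0))$, of dimension at most $4$ since $C_0$ is bounded above by a plane section of $Y$, and kernel $g\cdot H^0(\PP^3,\cO_{\PP^3}(1))$ of dimension $4$, so $h^0(\PP^3,\cI_C(d))\le 8$, a contradiction. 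You instead run a degree count: B\'ezout plus reducedness of $X\cap Y$ gives $\deg C_0=d(d-1)-\deg C$; intersecting two general members of $|\cI_C(d-3)|$ gives $\deg C\le (d-3)^2$, hence $\deg C_0\ge 5d-9>d-1$, while planarity of $C_0\subset Y$ would force $\deg C_0\le d-1$. Both arguments ultimately rest on the existence of low-degree surfaces through $C$: the paper needs only a single non-zero section of $\cI_C(d-3)$ (very ampleness is overkill at that point), whereas you need two members with no common component, which you correctly extract from very ampleness via Bertini irreducibility on $W$; in exchange, your argument yields the explicit lower bound $\deg C_0\ge 5d-9$, which the paper's does not. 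One small inaccuracy in your closing remark: comparing with $X$ instead of $Y$ would not break the argument, since it gives $\deg C_0\le d$ and $5d-9>d$ for all $d\ge 3$; so working with the lower-degree surface $Y$ is a convenience, not essential.
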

\begin{proof}
By hypothesis $\pi^{*}\cO_{\PP^3}(d-3)(-E)$ is very ample, in particular it has a non-zero section, and hence there exists a non-zero  $\tau\in H^0(\PP^3,\cI_C(d-3))$.  
 Multiplying $\tau$ by sections of $\cO_{\PP^3}(3)$ we get that  that $h^0(\PP^3,\cI_C(d))\ge 20$. Now assume that $C_0$ is planar. Recall that $C=C_1\cup\ldots\cup C_n$, and let 
\begin{equation*}
H^0(\PP^3,\cI_C(d))\overset{\alpha}{\lra} H^0(Y,\cO_Y(d))
\end{equation*}
 be the restriction map. Since $(C+C_0)\in|\cO_Y(d)|$,  the image of $\alpha$ is equal to $H^0(Y,\cO_Y(C_0))$, and hence 
 has  dimension at most $4$ because $C_0$ is planar.  The kernel of $\alpha$ has dimension $4$ because $Y$ has degree $(d-1)$. It follows that $h^0(\PP^3,\cI_C(d))\le 8$, contradicting the inequality  $h^0(\PP^3,\cI_C(d))\ge 20$. 
\end{proof}
 Thus none of the curves $C_0,C_1,\ldots,C_n$ is planar. 
\begin{lmm}\label{lmm:pochi}
Let $Y\subset\PP^3$ be a surface which is either smooth or has ODP's. The set of planes $P$ such that $P\cap Y$ is reducible is the union of a finite set and the collection of pencils through lines of $Y$. 
\end{lmm}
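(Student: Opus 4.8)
The plan is to prove the substantive inclusion — that, away from the pencils of planes through lines contained in $Y$, there are only finitely many planes cutting $Y$ in a reducible curve — the reverse inclusion being immediate: if $L\subset Y$ is a line and $P\supset L$, then $L$ is a proper component of $P\cap Y$, so $P\cap Y$ is reducible. Writing $e:=\deg Y$ (we may assume $e\ge 2$), I let $Z'$ be the set of planes $P$ such that $P\cap Y$ is reducible \emph{and} $P$ contains no line of $Y$, and I must show $Z'$ is finite.

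I would argue by contradiction. If $Z'$ were infinite, its closure would contain an irreducible curve $T\subset(\PP^3)^{\vee}$; after shrinking $T$ I may assume that for every $t\in T$ the plane $P_t$ contains no line of $Y$, that $P_t\cap Y$ is reducible, and that $P_t$ avoids the finitely many ODP's of $Y$ (each node $x$ only forces $P_t$ into the plane $\{P:x\in P\}$, which meets the curve $T$ finitely). In particular $Y$ is smooth along each $P_t\cap Y$. Over $T$ I then choose, flatly, a decomposition $P_t\cap Y=A_t+B_t$ into nonzero effective divisors with no common component. Since $P_t$ contains no line of $Y$, every component of $P_t\cap Y$ has degree $\ge 2$, whence $a:=\deg A_t\ge 2$ and $b:=\deg B_t\ge 2$.

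The proof rests on a positivity statement and an intersection computation. First, the family $\{A_t\}$ is non-constant: a constant value $A_t=A$ would put the curve $A$ into every $P_t$, forcing either all $P_t$ to coincide (impossible, $T$ being a curve) or $A$ to be a line of $Y$ lying in every $P_t$ (excluded). The same reasoning shows general members $A_t,A_{t'}$ have no common component, so the curves $A_t$ sweep out a $2$-dimensional, hence dense, subset of $Y$; being algebraically equivalent members of a family on the $\QQ$-factorial surface $Y$, they satisfy $A_t^2=A_t\cdot A_{t'}\ge 0$. Second, I compute $A_t\cdot B_t$ on $Y$ in two ways. From $A_t+B_t\sim\cO_Y(1)=:H$ I get
\[
A_t\cdot B_t=A_t\cdot(H-A_t)=A_t\cdot H-A_t^2=a-A_t^2\le a .
\]
On the other hand, each $y\in A_t\cap B_t$ is a smooth point of $Y$ at which $P_t\cap Y$ is singular, so $P_t=T_yY$; projecting $\PP^3$ onto $P_t$ from a general center then exhibits $Y$ near $y$ as a graph over $P_t$ and is the identity on $P_t\supset A_t\cup B_t$, hence induces an isomorphism of surface germs $(Y,y)\xrightarrow{\ \sim\ }(P_t,y)$ carrying $A_t,B_t$ to themselves. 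Local intersection numbers are therefore unchanged, so by B\'ezout in $P_t\cong\PP^2$,
\[
A_t\cdot B_t=A_t\cdot_{P_t}B_t=ab .
\]
Comparing the two expressions gives $ab=a-A_t^2\le a$, while $b\ge 2$ gives $ab\ge 2a$; thus $2a\le a$, i.e. $a\le 0$, contradicting $a\ge 2$. Hence $Z'$ is finite.

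The step I expect to be the crux is the equality $(A_t\cdot B_t)_{y,Y}=(A_t\cdot B_t)_{y,P_t}$ of local intersection numbers computed on the surface $Y$ and in the plane $P_t$: it is precisely here that the tangency $P_t=T_yY$ at each crossing point enters, via the local isomorphism furnished by projection, and it is what makes the oversized planar count $ab$ collide with the bounded surface count $a-A_t^2$. The only other point needing care is the covering property of $\{A_t\}$ that yields $A_t^2\ge 0$; both reduce to the elementary observation that the $A_t$ cannot share a fixed curve without producing a line of $Y$ lying in every $P_t$.
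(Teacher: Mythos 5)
Your argument for the main case --- a family of planes avoiding $\sing Y$ --- is correct and is essentially the paper's own proof of that case: the same two computations of $A_t\cdot B_t$ (B\'ezout in the plane versus $A_t\cdot(H-A_t)$ on $Y$, compared via a local analysis at the tangency points) and the same contradiction with nonnegativity of $A_t^2$ for a moving family. The genuine gap is in your reduction to that case. You shrink $T$ so that every $P_t$ ``avoids the finitely many ODP's of $Y$'', on the grounds that the hyperplane $H_x:=\{P: x\in P\}\subset(\PP^3)^{\vee}$ meets the curve $T$ in finitely many points. That is true only if $T\not\subset H_x$, and nothing in your argument rules out that the \emph{entire} one-dimensional family consists of planes through a node $x$. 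This possibility cannot be waved away, because it is precisely where your key local equality $(A_t\cdot B_t)_{y,Y}=(A_t\cdot B_t)_{y,P_t}$ breaks down: your projection argument needs $Y$ smooth at $y$ (it uses $T_yY=P_t$), and the equality is false at a node --- on a quadric cone, two coplanar rulings meet with multiplicity $\tfrac{1}{2}$ on the surface but with multiplicity $1$ in their common plane.

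The paper devotes the second half of its proof (its case (2)) to exactly this configuration: the generic plane of the family contains one or two singular points, say $a,b$. It blows up these points, replaces $P$, $Y$ and $C_1,C_2$ (your $A_t,B_t$) by their strict transforms, and redoes both intersection computations upstairs, obtaining $(\wt C_1\cdot\wt C_2)_{\wt Y}=-(m_1m_2-m_1-r_1r_2-s_1s_2+r_1+s_1)$, where $r_i=\mult_a C_i$ and $s_i=\mult_b C_i$. Strict negativity --- absurd for two effective curves with no common component --- then follows from the inequalities $r_i+s_i\le m_i$, which hold because otherwise the line $\la a,b\ra$ would be contained in $Y$, so that the curves in question would be residual to a line of $Y$, a case already excluded. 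To complete your proof you would need to supply an argument of this kind; the smooth-point case alone does not suffice, and the node case is not vacuous in the paper's application, where $Y$ is allowed to have an ODP.
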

\begin{proof}
Suppose the contrary. Then there exists a $1$-dimensional family of planes $P$ such that
 $P\cdot Y=C_1+C_2$ with $C_1,C_2$ divisors which intersect properly, $\supp C_1$ is irreducible,  and $\deg C_i>1$. Next, we distinguish between the two cases:
\begin{enumerate}
\item
 The generic $P$ does not contain any singular point of $Y$.
\item
The generic $P$  contains a single  point  $a\in \sing Y$, or two points $a,b\in \sing Y$.
\end{enumerate}
Assume that~(1) holds. Let $m_i:=\deg C_i$ for $i=1,2$. Then
\begin{equation}\label{echidna}
m_1 m_2=(C_1\cdot C_2)_P=(C_1\cdot C_2)_Y=(C_1\cdot(P-C_1))_Y=m_1-(C_1\cdot C_1)_Y
\end{equation}
where $(C_1\cdot C_2)_P$ is the intersection number of $C_1,C_2$ in the plane $P$, and $(C_1\cdot C_2)_Y$  is the intersection number of $C_1,C_2$ in the surface $Y$ (this makes sense because $Y$ has ODP singularities, and hence is $\QQ$-Cartier). The first equality  of~\eqref{echidna} holds by B\`ezout, the second equality is proved by a local computation of the multiplicity of intersection at each point of $C_1\cap C_2$ (one needs the hypothesis that $Y$ is smooth at each such point).   Thus~\eqref{echidna} gives $(C_1\cdot C_1)_Y=m_1(1-m_2)<0$, and this contradicts the hypothesis that $C_1$ moves in $Y$. If~(2) holds one argues similarly. We go through the computations  in the case that $P$ contains two singular points. Let $\wt{\PP}^3\to\PP^3$ be the blow up of  $\{a,b\}$, and $\wt{Y},\wt{P}\subset \wt{\PP}^3$ be the strict transforms of $Y$ and $P$ respectively.
By hypothesis $Y$ has an ODP at each of its singular points and hence $\wt{Y}$ is smooth, and of course $\wt{P}$ is smooth. Let $\wt{C}_i$ be the strict transform of $C_i$ in  $\wt{\PP}^3$.  Let $r_i:=\mult_{a} C_i$, $s_i:=\mult_{b} C_i$.  Then the equality 
\begin{equation}
(\wt{C}_1\cdot \wt{C}_2)_{\wt{P}}=(\wt{C}_1\cdot \wt{C}_2)_{\wt{Y}}
\end{equation}
gives  
\begin{equation}\label{anubi}
(\wt{C}_1\cdot \wt{C}_2)_{\wt{Y}}=-(m_1 m_2-m_1-r_1 r_2-s_1 s_2+r_1+s_1). 
\end{equation}
Now  $r_i+s_i\le m_i$ for $i=1,2$, because otherwise the line $\la a,b\ra$ would be contained in $Y\cap C_i$, and hence  we would be considering  curves residual to a line in $Y$, against the hypothesis. Since   $r_i+s_i\le m_i$ for $i=1,2$ the right-hand side of~\eqref{anubi} is strictly negative, and this is a contradiction.
\end{proof}
Now we prove that Item~(b) holds. Let $j\in\{0,\ldots,n\}$. Let  $a,b\in C_j$ be generic, in particular they are smooth  points of $Y$.
 By~\Ref{lmm}{pochi} every plane containing $a,b$ intersects $Y$ in an irreducible curve.  
Let $\wh{Y}\to Y$ be the blow-up of the base-locus of the pencil of plane sections of $Y$ containing $a,b$. Then $\wh{Y}$ has at most $A_n$-singularities, and hence is $\QQ$-factorial. Let $E,F$ be the exceptional sets over $a$ and $b$ respectively, both have strictly negative self-intersection. Let $i>0$ be such that $iE$ and $iF$ are Cartier. Let $\varphi\colon \wh{Y}\to\PP^1$ be the regular map defined by  the pencil of plane sections of $Y$ containing $a,b$; for $s\in\PP^1$ we let $D_s:=\varphi^{-1}(s)$. It suffices to prove that, given $r>0$, the set of $s\in\PP^1$ such that $\cO_{\wh{Y}}(riE-riF)|_{D_s}$ is trivial is finite.
 Assume the contrary: then, since   every plane containing $a,b$ intersects $Y$ in an irreducible curve, there exists  $\ell\in\QQ$ such that $riE-riF\equiv\varphi^{*}(\ell p)$ in $\Pic(\wh{Y})_{\QQ}$, where  $p\in\PP^1$ (see the proof of Item~(b) of Lemma 3.4 of~\cite{brenol}). It follows that the degrees of  $\cO_{\wh{Y}}(riE-riF)$ on $E$ and $F$ are both equal to $\ell$, and that is absurd because they have opposite signs.
 \qed
\section{Proof of the main result}\label{sec:chiudo}
We will prove~\Ref{thm}{zorba}. Let $Q\subset\PP^3$ be a smooth quadric and choose an isomorphism $\varphi\colon Q\overset{\sim}{\lra}\PP^1\times\PP^1$: we let $\cO_Q(a,b):=\varphi^{*}(\cO_{\PP^1}(a)\boxtimes\cO_{\PP^1}(b))$.  
\begin{prp}\label{prp:trereg}
A  curve in $|\cO_Q(2,3)|$ is $3$-regular. 
\end{prp}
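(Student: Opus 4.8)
The plan is to resolve $\cI_C$ (the ideal sheaf of $C$ in $\PP^3$) using the flag $C\subset Q\subset\PP^3$, and then read off the three required vanishings from Künneth on $Q\cong\PP^1\times\PP^1$. Since $Q$ is smooth and $C\in|\cO_Q(2,3)|$ is a Cartier divisor on $Q$, the relative ideal sheaf $\cI_{C/Q}$ is $\cO_Q(-2,-3)$, while $\cI_{Q/\PP^3}=\cO_{\PP^3}(-2)$. The standard exact sequence attached to $C\subset Q\subset\PP^3$ then reads
\[
0\lra \cO_{\PP^3}(-2)\lra \cI_C\lra \cO_Q(-2,-3)\lra 0,
\]
where I identify $\cO_Q(-2,-3)$ with its pushforward to $\PP^3$. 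Note that smoothness of $C$ is not needed for any of this.

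Recall that $\cI_C$ is $3$-regular precisely when $H^i(\PP^3,\cI_C(3-i))=0$ for all $i>0$, i.e.~when $H^1(\cI_C(2))=H^2(\cI_C(1))=H^3(\cI_C(0))=0$ (higher $i$ being automatic on $\PP^3$). I would twist the displayed sequence by $\cO_{\PP^3}(k)$ for $k=2,1,0$ in turn, using $\cO_{\PP^3}(k)|_Q=\cO_Q(k,k)$ so that the right-hand term becomes $\cO_Q(k-2,k-3)$, and then chase the long exact cohomology sequence. The two computational inputs are the standard facts $H^1(\PP^3,\cO(m))=H^2(\PP^3,\cO(m))=0$ for all $m$ together with $H^3(\PP^3,\cO(m))=0$ for $m\ge -3$, and the Künneth formula $H^q(Q,\cO_Q(a,b))=\bigoplus_{p+p'=q}H^p(\PP^1,\cO(a))\otimes H^{p'}(\PP^1,\cO(b))$.

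Carrying this out: for $k=2$ the $Q$-term is $\cO_Q(0,-1)$, whose $H^1$ vanishes because $H^1(\PP^1,\cO(-1))=H^1(\PP^1,\cO(0))=0$; being sandwiched between $H^1(\cO_{\PP^3})=0$ and $H^2(\cO_{\PP^3})=0$ this forces $H^1(\cI_C(2))=0$. For $k=1$ the $Q$-term is $\cO_Q(-1,-2)$, whose $H^2=H^1(\cO(-1))\otimes H^1(\cO(-2))$ vanishes because $H^1(\PP^1,\cO(-1))=0$; together with $H^2(\cO_{\PP^3}(-1))=0$ on the left this yields $H^2(\cI_C(1))=0$. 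For $k=0$ the $Q$-term is $\cO_Q(-2,-3)$, whose $H^3$ vanishes simply because $Q$ is a surface, and $H^3(\cO_{\PP^3}(-2))=0$ since $-2\ge -3$; hence $H^3(\cI_C(0))=0$. All three vanishings hold, so $\cI_C$ is $3$-regular.

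There is no serious obstacle here: the argument is a direct cohomology computation with no delicate input. The only points demanding a little care are the identification $\cI_{C/Q}=\cO_Q(-2,-3)$ and the twist $\cO_{\PP^3}(k)|_Q=\cO_Q(k,k)$, so that an $(a,b)$-divisor lands in the expected bidegree after twisting, together with keeping straight which $H^1(\PP^1,\cO(\cdot))$ terms vanish. Everything ultimately hinges on the vanishing $H^1(\PP^1,\cO(m))=0$ for $m\ge -1$.
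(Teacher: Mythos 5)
Your proof is correct, and it takes a route that is related to but genuinely different from the paper's. The paper works with the structure-sheaf sequence $0\to\cI_D\to\cO_{\PP^3}\to\cO_D\to 0$: the vanishings for $i=2,3$ are dispatched immediately (for $i=2$ this silently uses $H^1(D,\cO_D(1))=0$, which for smooth $D$ follows from $\deg\cO_D(1)=5>2=\deg K_D$), while the case $i=1$ is recast as surjectivity of the restriction $H^0(\cO_{\PP^3}(2))\to H^0(\cO_D(2))$, factored through the quadric: surjectivity onto $H^0(Q,\cO_Q(2,2))$ is standard, and surjectivity of $H^0(Q,\cO_Q(2,2))\to H^0(D,\cO_D(2))$ follows from $H^1(Q,\cO_Q(0,-1))=0$. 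You instead resolve $\cI_C$ by the flag (residual) sequence $0\to\cI_Q\to\cI_C\to\cI_{C/Q}\to 0$ and run the same twist-and-chase uniformly in all three cohomological degrees. The essential inputs coincide --- in degree $1$ your two outer terms $H^1(\cO_{\PP^3})$ and $H^1(Q,\cO_Q(0,-1))$ are exactly the two surjectivity statements of the paper --- but your packaging buys two things: it treats the three vanishings on an equal footing, and it uses nothing about the curve itself (no genus, degree, or smoothness), so it proves the statement verbatim for an arbitrary, possibly singular or non-reduced, member of $|\cO_Q(2,3)|$, which is what the proposition literally asserts; the paper's formulation in terms of surjective restriction maps is the more geometric one but leaves the $i=2$ step to the reader. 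One stylistic quibble: in your $k=2$ step, the relevant sandwich is $0=H^1(\cO_{\PP^3})\to H^1(\cI_C(2))\to H^1(Q,\cO_Q(0,-1))=0$; the vanishing of $H^2(\cO_{\PP^3})$ only serves to identify $H^1(\cI_C(2))$ with $H^1(Q,\cO_Q(0,-1))$ and is not needed once the latter is known to vanish.
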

\begin{proof}
Let  $D\in |\cO_Q(2,3)|$. 
Considering the exact sequence $0\to\cI_D\to\cO_{\PP^3}\to\cO_D\to 0$ we see right away that  if $i=2,3$, then  $H^i(\PP^3,\cI_D(3-i))=0$. In order to prove that $H^1(\PP^3,\cI_D(2))=0$ we must show that $H^0(\PP^3,\cO_{\PP^3}(2))\to H^0(C,\cO_{D}(2))$ is surjective. The map $H^0(\PP^3,\cO_{\PP^3}(2,2))\to H^0(Q,\cO_{Q}(2))$ is surjective, hence it suffices to prove that $H^0(Q,\cO_{Q}(2,2))\to H^0(C,\cO_{D}(2))$ is surjective. We have an exact sequence
\begin{equation*}
0\lra \cO_{Q}(0,-1)\lra \cO_{Q}(2,2)\lra \cO_D(2)\lra 0,
\end{equation*}
and since $H^1(Q,\cO_{Q}(0,-1))=0$ the map $H^0(Q,\cO_{Q}(2,2))\to H^0(D,\cO_{D}(2))$ is indeed surjective. 
\end{proof}
{\it Proof of~\Ref{thm}{zorba}.\/} If $d\le 6$ there is nothing to prove, hence we may assume that $d\ge 7$. 
Let $n:=\lfloor \frac{d-4}{3}\rfloor$. Choose disjoint smooth curves $C_1,\ldots,C_n$ such that each $C_j$ is a $(2,3)$-curve on a smooth quadric, and let $C:=C_1\cup\ldots\cup C_n$. We may assume that for $j\in\{1,\ldots,n\}$ the degree-$0$ class in $\CH_0(C_j)$ given by  $5 c_1(K_{C_j})-2 c_1(\cO_{C_j}(1))$ is \emph{not} zero. Let us show that the hypotheses of~\Ref{thm}{peppapig} are satisfied. Let $j\in\{1,\ldots,n\}$. We let $\pi_j\colon W_j\to\PP^3$ be the blow-up of $C_j$, and  $F_j\subset W_j$ be the exceptional divisor. Then $\pi_j^{*}\cO_{\PP^3}(3)(-F_j)$ is globally generated, and  $\pi_j^{*}\cO_{\PP^3}(4)(-F_j)$ is very ample: since $d-3\ge 3(n-1)+4$ it follows that $\pi^{*}\cO_{\PP^3}(d-3)(-E)$ is very ample. Let $j\in\{1,\ldots,n\}$: since  $d\ge 7$ the cohomology group $H^1(C_j,T_{C_j}(d-4))$ vanishes, and hence $H^1(C,T_{C}(d-4))=0$. By~\Ref{prp}{trereg} and
Example 1.8.32 of~\cite{roblaz} the curve $C$ is $3n$-regular, and since $3n\le(d-4)$ the curve $C$ is $(d-2)$-regular. Lastly, by construction no curve $C_j$ is planar. We have shown that the hypotheses of~\Ref{thm}{peppapig} are satisfied, and hence~\Ref{hyp}{ipnole} holds for $H\in |\cO_{\PP^3}(d)|$. Let $X\in|\cI_C(d)|$ be smooth and very generic: since  for $j\in\{1,\ldots,n\}$ the class $5 c_1(K_{C_j})-2 c_1(\cO_{C_j}(1))$ is not zero, the decomposable classes  $H^2,C_1^2,\ldots,,C_n^2$ on $X$ are linearly independent
by~\Ref{prp}{vaccini}. Thus $\DCH_0(X)$ has rank at least $n+1=\lfloor \frac{d-1}{3}\rfloor$. 
\qed

\end{document}